\def\@textbottom{\vskip \z@ \@plus 200pt}
\let\@texttop\relax
\numberwithin{theorem}{section}
\newtheorem{property}{Property}
\newtheorem{rem}{Remark}
\newcommand{\diago}{\mathrm{diag}}
\title{A $\tau$-approximation based preconditioning technique for space fractional diffusion equation with non-separable variable coefficients
\thanks{The corresponding author. The work of Xuelei Lin was supported by research grants: 2021M702281 from   China Postdoctoral Science Foundation; HA45001143, HA11409084 two start-up Grants from Harbin Institute of Technology, Shenzhen.}}
\author{Xue-lei Lin\thanks{School of Science, Harbin Institute of Technology, Shenzhen 518055, China.
		(e-mail:hxuellin@gmail.com).}
	\and
	Michael K. Ng\thanks{Department of
		Mathematics, Hong Kong Baptist University, Kowloon Tong, Hong Kong (e-mail:michael-ng@hkbu.edu.hk).}
}
\begin{document}
	\maketitle	
	\begin{abstract}
		In this paper, we study a $\tau$-matrix approximation based preconditioner for  the linear systems arising from discretization of unsteady state Riesz space fractional diffusion equation with non-separable variable coefficients. The structure of coefficient matrices of the linear systems is identity plus summation of diagonal-times-multilevel-Toeplitz matrices. In our preconditioning technique, the diagonal matrices are approximated by scalar identity matrices and the Toeplitz matrices which are approximated by $\tau$-matrices (a type of matrices diagonalizable by discrete sine transforms). The proposed preconditioner is fast invertible through the fast sine transform (FST) algorithm. Theoretically, we show that the GMRES solver for the preconditioned systems has an optimal convergence rate (a convergence rate independent of discretization stepsizes). 
To the best of our knowledge, this is the first preconditioning method with the optimal convergence rate for the variable-coefficients space fractional diffusion equation. Numerical results are reported to demonstrate the efficiency of the proposed method.
	\end{abstract}
	
	\begin{keywords}
		optimal convergence; preconditioners; variable coefficients; space-fractional diffusion equations;
		Krylov subspace methods
	\end{keywords}	
	\begin{AMS}
		65B99; 65M22; 65F08; 65F10
	\end{AMS}

	\section{Introduction}\label{introduction}
	We  firstly consider the two-dimension initial-boundary value problem of space-fractional diffusion equation (SFDE) \cite{chendengwu2013} (the multi-dimensional case will be  considered in Section 4):
	\begin{align}
	&\frac{\partial u(x,y,t)}{\partial t}=d(x,y)\frac{\partial^{\alpha}u(x,y,t)}{\partial|x|^{\alpha}}+e(x,y)\frac{\partial^{\beta}u(x,y,t)}{\partial|y|^{\beta}}+f(x,y,t),\notag\\
	&\qquad\qquad\qquad\qquad\qquad\qquad\qquad\qquad\qquad\qquad\qquad\qquad(x,y,t)\in\Omega\times(0,T],\label{rsdiffusioneq}\\
	&u(x,y,t)=0,\qquad\qquad\qquad\qquad\qquad\qquad\qquad \qquad\qquad(x,y,t)\in\partial\Omega\times[0,T],\label{drcheltboundary}\\ 
	&u(x,y,0) =\psi(x,y),\qquad\qquad\qquad\qquad\qquad\quad\quad\qquad\quad~ (x,y)\in\bar{\Omega},\label{initialcondition}
	\end{align}
	$\Omega=(l_1,r_1)\times(l_2,r_2)$ is an open rectangle; $\partial\Omega$ denotes the boundary of $\Omega$;
	where the given  coefficient functions $ \hat{d}\geq d \geq \check{d}>0$,  $ \hat{e}\geq e \geq \check{e}>0$ for some positive constants $\hat{d},\check{d},\hat{e},\check{e}>0$;
	$f$ and $\psi$ are both given; $\frac{\partial^{\alpha}u}{\partial|x|^{\alpha}}$ ($\frac{\partial^{\alpha}u}{\partial|y|^{\beta}}$, respectively) is the Riesz fractional derivative of order $\alpha\in(1,2)$ ($\beta\in(1,2)$, respectively) with respect to $x$ ($y$, respectively) defined as follows whose definition is given by \cite{podlubny1999}
		\begin{align*}
		&\frac{\partial^{\alpha}u(x,y,t)}{\partial|x|^{\alpha}}:=\frac{-1}{2\cos(\alpha\pi/2)\Gamma(2-\alpha)}\frac{\partial^2}{\partial x^2}\int_{l_1}^{r_1}\frac{u(\xi,y,t)}{|x-\xi|^{\alpha-1}}d \xi,\\
		&\frac{\partial^{\beta}u(x,y,t)}{\partial|y|^{\beta}}:=\frac{-1}{2\cos(\beta\pi/2)\Gamma(2-\beta)}\frac{\partial^2}{\partial x^2}\int_{l_2}^{r_2}\frac{u(x,\xi,t)}{|y-\xi|^{\beta-1}}d \xi,
		\end{align*}
		with $\Gamma(\cdot)$ being the gamma function.

	Fractional differential equations have gained considerable attention in the last few decades due to  its applications in various fields of science and engineering, such as electrical and mechanical engineering, biology, physics, control theory, data fitting; see  \cite{podlubny1999,hilferrudolf-2000,bouchaud-1990,metzler-2000,solomon-1993,agrawal-2002}. As a class of fractional differential equations, space fractional diffusion equations have been widely and successfully used in modeling challenging phenomena such as long-range interactions, nonlocal dynamics \cite{benson2000,podlubny1999}.
	
	Since the closed-form analytical solutions of fractional diffusion equations are usually unavailable,  discretization schemes are proposed to solve fractional diffusion equations; see, for instance, \cite{zhaozsunwcao2015,wangwang2010,leislhyc2016,liuqliufguyt2015,meerschaert2006,sousaelic,tianwy2015,chendeng2014,celikduman2012}. Since the fractional differential operator is non-local, its  numerical discretization leads to dense linear systems. That means, direct solver like Gaussian elimination is time-consuming for solving the discrete fractional differential equations.
       This promotes the development of fast solvers for solving discrete   fractional differential equations.
	
	Implicit uniform-grid discretization of  \eqref{rsdiffusioneq}--\eqref{initialcondition} leads to a  dense Toeplitz-like linear system at each time-step. 
	Fortunately, the matrix-vector multiplication of the Toeplitz-like matrix can be fast implemented using fast Fourier transforms (FFTs); see, e.g., \cite{wang2012fast}. The fast matrix-vector multiplication motivates the development of iterative solvers for the Toeplitz-like linear system arising from \eqref{rsdiffusioneq}--\eqref{initialcondition}.   In \cite{lin2017multigrid}, a geometric multigrid method is proposed for solving the Toeplitz-like linear systems, which however has no convergence analysis due to the complicated iteration matrices. In \cite{hkpang2012}, an algebraic multigrid method is proposed for the Toeplitz-like linear system arising from discretization of one-dimension SFDE and the convergence of the two-grid algorithm is established under the assumption that the diffusion coefficient function is a constant. In \cite{leisun2013}, a (multilevel) circulant preconditioner is proposed for the Toeplitz-like linear system and the convergence of the preconditioned GMRES solver is established under the assumption that $d$ and $e$ are both constants. In \cite{pankeng2014}, an approximate-inverse preconditioner is proposed for the Toeplitz-like system and the convergence of GMRES solver is established under the assumption that  the ratios between temporal and spatial discretization step-sizes ($\Delta t/ \Delta x^{\alpha}$ and $\Delta t/\Delta y^{\beta}$) are all constants. In \cite{jinlinzhao2015}, a banded preconditioner is proposed for the Toeplitz-like system and the inversion of the banded preconditioner is inexactly implemented by the incomplete LU factorization. The convergence rate of the preconditioned GMRES with the banded preconditioner deteriorates as  $\Delta t/ \Delta x^{\alpha}$ or $\Delta t/\Delta y^{\beta}$ increases. In \cite{donatelli2016spectral} and \cite{barakitis2022preconditioners}, preconditioners based on spectral symbol are proposed for the Toeplitz-like system and the  spectrum distribution of the preconditioned matrix is analyzed under the assumption that $\Delta t/ \Delta x^{\alpha}$ and $\Delta t/\Delta y^{\beta}$ are all constants. To improve the aforementioned theoretical results relying on the assumption that $d$ and $e$ are constants or that $\Delta t/ \Delta x^{\alpha}$ and $\Delta t/\Delta y^{\beta}$ are constants, the authors in  \cite{lin2017splitting} proposed a (multilevel) Toeplitz preconditioner for the Toeplitz-like system and  the condition number of the preconditioned matrix is shown to be uniformly bounded by constants independent of discretization step-sizes under a weaker assumption that $d$ and $e$ are of simple structure, such as separable functions. The fast inversion of the Toeplitz preconditioner proposed in \cite{lin2017splitting} resorts to multigrid inner iterations, which is complicated to implement. In \cite{noutsos2016essential}, a $\tau$-matrix approximation based preconditioner constructed from generating function of Toeplitz is proposed for Toeplitz system. The authors in \cite{noutsos2016essential} give bounds of the spectrum of the preconditioned matrix provided that the underlying generating function is a trigonometric polynomial. Nevertheless, for Toeplitz matrices arising from SFDEs, the generating functions are not  trigonometric polynomials. That means there is no theoretical bounds provided for spectrum of preconditioned matrix when applying the preconditioner proposed in \cite{noutsos2016essential} to preconditioning the Toeplitz matrices arising from SFDEs.
	Recently, the authors in \cite{huangxin2022} proposed a $\tau$-matrix approximation based preconditioner for steady-state constant-coefficients Riesz fractional diffusion equation, which approximates the related Toeplitz matrices with $\tau$-matrices\footnote{a type of matrices that are diagonalizable by discrete sine transforms}. With the preconditioner proposed in \cite{huangxin2022}, the preconditioned matrix is shown to have a spectrum lies in $(1/2,3/2)$. Moreover, the  preconditioner proposed in \cite{huangxin2022} is fast invertible through the FSTs. The fast invertibility and the optimal convergence of the $\tau$-matrix based preconditioning technique lead to the quasi-linear complexity of PCG solver for solving the steady-state problem discussed in \cite{huangxin2022}.
	
	Motivated by the $\tau$-matrix based preconditioning technique proposed in \cite{huangxin2022}, we propose a $\tau$-preconditioner for the Toeplitz-like system arising from the variable-coefficients SFDE  \eqref{rsdiffusioneq}--\eqref{initialcondition}. The structure  of the Toeplitz-like linear system is a summation of an identity matrix and two  diagonal-times-two-level-Toeplitz matrices. In our preconditioning technique, the diagonal parts are approximated by scalar identity matrices and the multi-level Toeplitz parts are approximated by $\tau$-matrices. We call the so obtained preconditioner as $\tau$-preconditioner. The inversion of the $\tau$-preconditioner can be fast and exactly implemented by the FSTs, which is in contrast to the inversion of the Toeplitz preconditioner proposed in \cite{lin2017splitting} resorting to inexact multigrid inner iteration. More importantly, we show that the GMRES solver for the preconditioned linear system has an optimal convergence rate (a linear convergence rate independent of discretization step-sizes) with  assumption that the diffusion coefficient function $d$ ($e$, respectively) is partially Lipschitz-continuous with respect to  $x$ ($y$, respectively). Such an assumption is weaker than the assumption used in \cite{lin2017splitting}, since it does not require the separability of the diffusion coefficient functions. The proposed preconditioning technique and the theoretically results can be extended to the multi-dimension case. To the best of our knowledge, this is the first optimal preconditioning technique\footnote{an optimal preconditioning technique means iterative solver for the preconditioned system has a linear convergence rate independent of discretization step-sizes}  for the SFDE with non-separable variable-coefficients. Numerical examples are given to support the theoretical results
	and to show the efficiency of the proposed preconditioning method.
	
	The outline of this paper is as follows. In Section 2, we present the Toeplitz-like linear systems arising from discretization of \eqref{rsdiffusioneq}-\eqref{initialcondition} and review some properties of the coefficient matrices. In Section 3, we propose  the $\tau$-preconditioner for the Toeplitz-like linear system and analyze the convergence rate of GMRES solver for the preconditioned system. In Section 4, we extend the $\tau$-preconditioner and the theoretical results to the multi-dimension space fractional diffusion equation.
	In Section 5, we present numerical results to show the performance of the proposed preconditioner.
	Finally, we give concluding remarks in Section 6.
	
	\section{Discretization of \eqref{rsdiffusioneq}--\eqref{initialcondition} and the Toeplitz-like Linear Systems}
	
	In this section, we discuss the discretization of the Riesz space fractional diffusion equation \eqref{rsdiffusioneq}--\eqref{initialcondition} and present the resulting linear systems. 
	
	For positive integers $N$, $M_x$ and $M_y$, let $\Delta t=T/N$, $\Delta x=(r_1-l_1)/(M_x+1)$ and $\Delta y=(l_2-r_2)/(M_y+1)$.
	Denote the set of all positive integers and the set of all nonnegative integers by $\mathbb{N}^{+}$ and $\mathbb{N}$, respectively. For any $m,n\in\mathbb{N}$ with $m\leq n$, define the set $m\wedge n:=\{m,m+1,...,n-1,n\}$.
	Define the grid points,
	\begin{align*}
	&\{t_n|t_n=n\Delta t,~  n\in 0\wedge N\}, \quad \{x_i|x_i=l_1+i\Delta x,~i\in 0\wedge (M_x+1)\}, \\ &\{y_j|y_j=l_2+j\Delta y,~j\in 0\wedge (M_y+1)\}.
	\end{align*}
	   Denote $G_{i,j}=(x_i,y_j)$ for $(i,j)\in\mathcal{I}_h$. Denote the vector assembling all spatial grid-points by
	\begin{equation}\label{vxydef}
		{\bf V}_{x,y}=(G_{1,1},G_{2,1},...,G_{M_x,1},G_{1,2},G_{2,2},...,G_{M_x,2},......,G_{1,M_y},G_{2,M_y},...,G_{M_x,M_y})^{\rm T}.
	\end{equation}

	Define the index sets, $\hat{\mathcal{I}}_{h}=\{(i,j)| i\in 0\wedge (M_x+1),~j\in 0\wedge (M_y+1)\}$, $\mathcal{I}_h=\{(i,j)|i\in 1\wedge M_x,~j\in 1\wedge M_y\}$, $\partial\mathcal{I}_h=\hat{\mathcal{I}}_{h}\setminus \mathcal{I}_h$. Let  $\{v_{i,j}^n|(i,j)\in \hat{\mathcal{I}}_h,~  n\in 0\wedge N\}$ be a grid function. Define
	\begin{align}
	&\delta_tv_{i,j}^{n}=\frac{v_{i,j}^{n}-v_{i,j}^{n-1}}{\Delta t},~d_{i,j}=d(x_i,y_j),~e_{i,j}=e(x_i,y_j),~f_{i,j}^n=f(x_i,y_j,t_n),\notag\\
	&\delta_x^{\alpha}v_{i,j}^{n}:=-\frac{1}{\Delta x^{\alpha}}\sum\limits_{k=1}^{M_x}s_{|i-k|}^{(\alpha)}v_{k,j}^{n},\quad \delta_y^{\beta}v_{i,j}^{n}:=-\frac{1}{\Delta y^{\beta}}\sum\limits_{k=1}^{M_y}s_{|j-k|}^{(\beta)}v_{i,k}^{n},\quad, n\in 1\wedge N,~ (i,j)\in\mathcal{I}_h.\label{spatialdiscform}
	\end{align}
	$\delta_x^{\alpha}$ and $\delta_y^{\beta}$ are discretization of the fractional derivatives $\frac{\partial^{\alpha}}{\partial |x|^{\alpha}}$ and $\frac{\partial^{\beta}}{\partial |y|^{\beta}}$, respectively.
	The numbers $s_{k}^{(\gamma)}$ ($k\geq 0$, $\gamma\in(1,2)$) are  determined by specific discretization schemes, see, e.g., \cite{meerschaert2006,tianwy2015,chendeng2014,ding2017high,celikduman2012}. 
	
	Then, \eqref{rsdiffusioneq}--\eqref{initialcondition} can be discretized as follows
	\begin{align*}
	&\delta_tu_{i,j}^{n}=d_{i,j}\delta_x^{\alpha}v_{i,j}^{n}+e_{i,j}\delta_y^{\beta}v_{i,j}^{n}+f_{i,j}^n,\quad n\in 1\wedge N,~ (i,j)\in\mathcal{I}_h,\\
	&u_{i,j}^n\equiv 0,\quad n\in 1\wedge N,\quad (i,j)\in\partial\mathcal{I}_h,\\
	&u_{i,j}^0=\psi(x_i,y_j),\quad (i,j)\in\mathcal{I}_h,
	\end{align*}
    which is equivalent to the following linear systems
    \begin{align}\label{discsystem}
    &\frac{1}{\Delta t}({\bf u}^n-{\bf u}^{n-1})=-\left[\frac{1}{\Delta x^{\alpha}}{\bf D}({\bf I}_{M_y}\otimes {\bf S}_{\alpha,M_x})+\frac{1}{\Delta y^{\beta}}{\bf E}( {\bf S}_{\beta,M_y}\otimes{\bf I}_{M_x})\right]{\bf u}^{n}+{\bf f}^{n},\notag\\
    & n=1,2,...,N,
    \end{align}
     where ${\bf I}_k$ denotes $k\times k$ identity matrix,
    \begin{align}
    	&{\bf u}^{n}=(u_{1,1}^{n},u_{2,1}^{n},...,u_{M_x,1}^{n},u_{1,2}^{n},u_{2,2}^{n},...,u_{M_x,2}^{n},......,u_{1,M_y}^{n},u_{2,M_y}^{n},...,u_{M_x,M_y}^{n})^{\rm T},\notag\\
    	&{\bf D}=\diag({\bf V}_{x,y}),\quad {\bf E}=\diag(e({\bf V}_{x,y})),\quad {\bf f}^{n}=f({\bf V}_{x,y},t_n),\notag\\
    	&{\bf S}_{\gamma,M}:=\left[
    	\begin{array}
    		[c]{ccccc}
    		s_0^{(\gamma)} & s_{1}^{(\gamma)} &\ldots   &s_{M-2}^{(\gamma)}  &s_{M-1}^{(\gamma)}\\
    		s_1^{(\gamma)} & s_0^{(\gamma)}&s_{1}^{(\gamma)}   &\ldots &s_{M-2}^{(\gamma)}\\
    		\vdots&\ddots &\ddots&\ddots&\vdots\\
    		s_{M-2}^{(\gamma)} & \ldots&s_1^{(\gamma)}& s_0^{(\gamma)} &s_{1}^{(\gamma)}\\
    		s_{M-1}^{(\gamma)} & s_{M-2}^{(\gamma)}& \ldots & s_1^{(\gamma)} & s_{0}^{(\gamma)}
    	\end{array}
    	\right],\quad M\geq 1,\quad \gamma\in(1,2).\label{sgamamdef}
    \end{align}
    Here, $\otimes$ denotes the Kronecker product. Removing ${\bf u}^{n-1}$ (${\bf u}^{n}$, respectively) to right (left, respectively) hand side, \eqref{discsystem} can be equivalently rewritten as 
    \begin{equation}\label{toeplitz-likesystem}
    	{\bf A}{\bf u}^{n}={\bf b}^{n}, \quad n=1,2,...,N,
    \end{equation} 
    where ${\bf b}^{n}={\bf u}^{n-1}+\tau{\bf f}^{n}$; 
    \begin{align*}
    &{\bf A}={\bf I}_{J}+\eta_x{\bf A}_x+\eta_y{\bf A}_y,\quad \eta_x=\frac{\Delta t}{\Delta x^{\alpha}},\quad \eta_y=\frac{\Delta t}{\Delta y^{\beta}},\\
    &{\bf A}_x={\bf D}({\bf I}_{M_y}\otimes {\bf S}_{\alpha,M_x}),\quad {\bf A}_y={\bf E}( {\bf S}_{\beta,M_y}\otimes {\bf I}_{M_x})
    \end{align*}
    ${\bf A}={\bf I}_{J}+\Delta t({\bf A}_x+{\bf A}_y)$;  $J=M_xM_y$. Clearly, the unknowns in \eqref{toeplitz-likesystem} can be solved in a sequential manner: ${\bf u}^{1},{\bf u}^{2},...,{\bf u}^{N}$. ${\bf S}_{\gamma,M}$ defined in \eqref{sgamamdef} is a symmetric Toeplitz matrix\footnote{A Toeplitz matrix is a matrix whose entries are constants along its diagonals}. The matrix ${\bf A}$ is the so-called Toeplitz-like matrix. 
    
    	Define a set of sequences as
    \begin{equation*}
    	\mathcal{D}_s:=\Big\{\{w_k\}_{k\geq 0} \ \Big |  \ ||\{w_{k}\}||_{\mathcal{D}_s}:=\sup\limits_{k\geq 0}|w_{k}|(1+k)^{1+s}<+\infty\Big\},
    \end{equation*}
    for some $s>0$.

    In this paper, we adopt the numerical schemes proposed in \cite{ccelik2012crank,meerschaert2006finite,sousaelic} for evaluation of $s_{k}^{(\gamma)}$ ($k\geq 0$, $\gamma\in(1,2)$). Actually  $s_{k}^{(\gamma)}$ ($k\geq 0$, $\gamma\in(1,2)$) arising from \cite{ccelik2012crank,meerschaert2006finite,sousaelic} holds the following properties.
    \begin{property}\label{skprop}
    \begin{description}
    \item [(i)] $\{s_{k}^{(\gamma)}\}_{k\geq 0}\in\mathcal{D}_{\gamma}$, $\forall \gamma\in(1,2)$;
    \item[(ii)]$s_0^{(\gamma)}>0$, $s_{k}^{(\gamma)}\leq 0$ for $k\geq 1$;
    \item[(iii)] $\inf\limits_{m\geq 1}{(m+1)^{\beta}}\left(s_0^{(\gamma)}+2\sum\limits_{k=1}^{m-1}s_k^{(\gamma)}\right)>0$;
    \item[(iv)]$s_{k}^{(\gamma)}\leq s_{k+1}^{(\gamma)}$ for $k\geq 1$.
    \end{description}
    \end{property}
    Property ${\bf (i)}$ of these schemes has been verified in \cite{lin2017splitting}. We verify Property ${\bf (ii)}$--${\bf (iv)}$ of these schemes in the Appendices \ref{schm1verifysec}--\ref{schm3verifysec}.
    
    \section{The $\tau$-preconditioner for the Toeplitz-like matrix and convergence of GMRES for the preconditioned system}\label{tauprecdef2d}
		
    In this section, we define our $\tau$-preconditioner for the linear systems in \eqref{toeplitz-likesystem}, discuss a fast implementation for inversion of the $\tau$-preconditioner  and show that GMRES solver for the preconditioned system has a linear convergence rate independent of $\Delta t$, $\Delta x$ and $\Delta y$.
    
    Clearly, the $N$ many linear systems in \eqref{toeplitz-likesystem} have the common coefficient matrix ${\bf A}$. Hence, for ease of statement, we use the following single linear system to represent the $N$ many linear systems given in \eqref{toeplitz-likesystem}.
    \begin{equation}\label{commontoeplitz-likesystem}
    {\bf A}{\bf u}={\bf b},
    \end{equation}
    with ${\bf u}$ and ${\bf b}$ denoting ${\bf u}^{n}$ and ${\bf b}^n$ for some $n$.
    
    Before defining the preconditioner, we firstly introduce some notations that will be used in later investigation.
    
    For a symmetric Toeplitz matrix ${\bf T}_m\in\mathbb{R}^{m\times m}$ with $[t_0,t_1,...,t_{m-1}]^{\rm T}\in\mathbb{R}^{m\times 1}$ being the first column, define its $\tau$-matrix approximation as \cite{bini1990new}
    \begin{equation}\label{tauopdef}
    	\tau({\bf T}_m):={\bf T}_m-{\bf H}_m,
    \end{equation}
    where ${\bf H}_m$ is a Hankel matrix\footnote{A Hankel matrix is a matrix with entries being constants along the antidiagonals} whose first column and last column are $[t_2,t_3,...,t_{m-1},0,0]^{\rm T}$ and $[0,0,t_{m-1},...,t_3,t_2]^{\rm T}$  are the first column and the last row, respectively. An interesting property of the $\tau$-matrix defined in \eqref{tauopdef} approximation is that it is diagonalizable by the sine transform matrix; i.e.,
    \begin{equation}\label{taumatdiag}
    	\tau({\bf T}_m)={\bf Q}_m{\bf \Lambda}_m{\bf Q}_m^{\rm T},
    \end{equation}
    where ${\bf \Lambda}_m={\rm diag}(\lambda_i)_{i=1}^{m}$ is a diagonal matrix with
    \begin{equation}\label{lambdaicomp}
    	\lambda_{i}=t_0+2\sum\limits_{j=1}^{m-1}t_j\cos\left(\frac{\pi ij}{m+1}\right),\quad  i\in 1\wedge m,
    \end{equation}
    \begin{equation}\label{sinematdef}
    	{\bf Q}_{m}:=\sqrt{\frac{2}{m+1}}\left[\sin\left(\frac{\pi jk}{m+1}\right)\right]_{j,k=1}^{m}
    \end{equation}
    is a sine transform matrix. It is well-known that $Q_m$ is real symmetric and orthogonal for any $m\geq 1$. The product between matrix ${\bf Q}_m$ and a given vector of length $m$ can be fast computed within $\mathcal{O}(m\log m)$ operations using FST.
    Moreover, we note that the diagonal entries $\{\lambda_i\}_{i=1}^{m}$ defined in \eqref{lambdaicomp} can be fast computed within $\mathcal{O}(m\log m)$ operations using fast cosine transform (FCT).

    We then define the $\tau$-preconditioner for \eqref{commontoeplitz-likesystem} as follows
    \begin{equation}\label{pdef}
    {\bf P}={\bf I}_J+\eta_x\bar{\bf A}_x+\eta_y\bar{\bf A}_y,
    \end{equation}
    where
    \begin{align*}
    &\bar{\bf A}_x=\bar{d}{\bf I}_{M_y}\otimes\tau({\bf S}_{\alpha,M_x}), \quad \bar{\bf A}_y=\bar{e}\tau({\bf S}_{\beta,M_y})\otimes{\bf I}_{M_x},\qquad\bar{d}=\sqrt{\check{d}\hat{d}},\quad \bar{e}=\sqrt{\check{e}\hat{e}}.
    \end{align*}
   Instead of solving \eqref{commontoeplitz-likesystem}, we employ GMRES solver to solve the following equivalent preconditioned system
   \begin{equation}\label{precsystem}
   	{\bf P}^{-1}{\bf A}{\bf u}={\bf P}^{-1}{\bf b}.
   \end{equation}
   
   In the each GMRES iteration, it requires to compute some matrix-vector product ${\bf P}^{-1}{\bf A}{\bf v}$ for some given vector ${\bf v}$. One can compute ${\bf A}{\bf v}$ first and then compute ${\bf P}^{-1}(\tilde{\bf y})$ with $\tilde{\bf y}={\bf A}{\bf v}$ given. The matrix-vector ${\bf A}{\bf v}$ can be fast computed by utilizing the Toeplitz-like structure of ${\bf A}$ with $\mathcal{O}(J\log J)$ operations; see, e.g., \cite{wang2012fast,ng2004iterative,chan2007introduction}. It thus remains to discuss the computation of ${\bf P}^{-1}(\tilde{\bf y})$ with a given vector $\tilde{\bf y}$. By \eqref{taumatdiag}-\eqref{lambdaicomp} and the properties of Kronecker product, we see that ${\bf P}$ is diagonalizable by a two-dimension sine transform matrix as follows
   \begin{equation}\label{pdiagform}
   	{\bf P}={\bf Q}{\bf \Lambda}{\bf Q}^{\rm T},
   \end{equation}
   where ${\bf Q}={\bf Q}_{M_y}\otimes {\bf Q}_{M_x}$ is a two-dimension sine transform matrix,
   \begin{align*}
   	{\bf \Lambda}={\bf I}+\bar{d}\eta_x{\bf I}_{M_y}\otimes {\bf \Lambda}_{x}+\bar{e}\eta_y{\bf \Lambda}_{y}\otimes {\bf I}_{M_x},
   \end{align*}
   is a diagonal matrix with
   \begin{align}
   	&{\bf \Lambda}_{x}={\rm diag}(\lambda_{x,i})_{i=1}^{M_x},\quad \lambda_{x,i}=s_0^{(\alpha)}+2\sum\limits_{j=1}^{M_x-1}s_j^{(\alpha)}\cos\left(\frac{\pi ij}{M_x+1}\right),\quad i\in 1\wedge M_x,\label{lambdaxdef}\\
   	&{\bf \Lambda}_{y}={\rm diag}(\lambda_{y,i})_{i=1}^{M_y},\quad \lambda_{y,i}=s_0^{(\beta)}+2\sum\limits_{j=1}^{M_y-1}s_j^{(\beta)}\cos\left(\frac{\pi ij}{M_y+1}\right),\quad i\in 1\wedge M_y.\label{lambdaydef}
   \end{align}
   Hence,  ${\bf P}^{-1}(\tilde{\bf y})$ can be computed in the way ${\bf Q}({\bf\Lambda}^{-1}({\bf Q}^{\rm T}\tilde{\bf y}))$, which can be fast implemented within $\mathcal{O}(J\log J)$ operations by applying the FSTs. 
	
   \subsection{Convergence of GMRES solver for the preconditioned system \eqref{precsystem}}
	
   For any Hermitian positive semi-definite matrix ${\bf B}\in\mathbb{R}^{k\times k}$, define $${\bf B}^{z}:={\bf V}^{\rm T}\diag([{\bf D}(i,i)]^z)_{i=1}^{k}{\bf V},\quad z\in\mathbb{R},$$ where ${\bf B}={\bf V}^{\rm T}{\bf D}{\bf V}$ denotes the orthogonal diagonalization of ${\bf B}$. If ${\bf B}$ is Hermitian positive definite (HPD) and $z\in\mathbb{R}$, we rewrite $({\bf B}^{-1})^{z}$ as ${\bf B}^{-z}$  for simplification of the notation.
   
   For any Hermitian matrix ${\bf H}$, denote by $\lambda_{\min}({\bf H})$ and  $\lambda_{\max}({\bf H})$, the minimal eigenvalue and the maximal eigenvalue of ${\bf H}$, respectively.
   
      	For any symmetric matrices ${\bf H}_1,{\bf H}_2\in\mathbb{R}^{m\times m}$, denote ${\bf H}_2 \succ ({\rm or} \succeq) \ {\bf H}_1$
   if ${\bf H}_2-{\bf H}_1$ is positive definite (or semi-definite). Especially, we denote ${\bf H}_2 \succ ({\rm or} \succeq) \ {\bf O}$, if ${\bf H}_2$ itself is positive definite (or semi-definite).
   Also, ${\bf H}_1 \prec ({\rm or} \preceq) \ {\bf H}_2$ and ${\bf O} \prec ({\rm or} \preceq) \ {\bf H}_2$  have the same meanings as those of ${\bf H}_2 \succ ({\rm or} \succeq) \ {\bf H}_1$ and ${\bf H}_2 \succ ({\rm or} \succeq) \ {\bf O}$, respectively.
   
   \begin{lemma}\label{tausgammamhpdlm}
   For any $\gamma\in(1,2)$ and any $m\in\mathbb{N}^{+}$, it holds that $\tau({\bf S}_{\gamma,m})\succ {\bf O}$.
   \end{lemma}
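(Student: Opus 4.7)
The plan is to use the explicit diagonalization of the $\tau$-matrix given in \eqref{taumatdiag}--\eqref{lambdaicomp} and reduce the statement to a pointwise lower bound on the eigenvalues. Since $\tau(\mathbf{S}_{\gamma,m})$ is symmetric (it is the difference of the symmetric matrix $\mathbf{S}_{\gamma,m}$ and a symmetric Hankel matrix) and orthogonally diagonalizable by $\mathbf{Q}_m$, to establish $\tau(\mathbf{S}_{\gamma,m})\succ \mathbf{O}$ it suffices to show that every eigenvalue
\[
\lambda_i \;=\; s_0^{(\gamma)}+2\sum_{j=1}^{m-1}s_j^{(\gamma)}\cos\!\left(\frac{\pi i j}{m+1}\right),\qquad i\in 1\wedge m,
\]
is strictly positive.

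The key observation is that, by Property~\ref{skprop}(ii), $s_j^{(\gamma)}\le 0$ for every $j\ge 1$. Combined with the trivial bound $\cos\theta \le 1$, multiplying by the non-positive number $s_j^{(\gamma)}$ reverses the inequality and yields
\[
s_j^{(\gamma)}\cos\!\left(\frac{\pi i j}{m+1}\right)\;\ge\; s_j^{(\gamma)}, \qquad j\ge 1,\; i\in 1\wedge m.
\]
Summing over $j=1,\dots,m-1$ and adding $s_0^{(\gamma)}$ gives, uniformly in $i$,
\[
\lambda_i \;\ge\; s_0^{(\gamma)}+2\sum_{j=1}^{m-1}s_j^{(\gamma)}.
\]

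The final step is to invoke Property~\ref{skprop}(iii): since $(m+1)^{\gamma}>0$ and the infimum of $(m+1)^{\gamma}\bigl(s_0^{(\gamma)}+2\sum_{k=1}^{m-1}s_k^{(\gamma)}\bigr)$ over $m\ge 1$ is strictly positive, the quantity inside the parentheses itself is strictly positive for every $m\ge 1$. Hence $\lambda_i>0$ for all $i\in 1\wedge m$, and $\tau(\mathbf{S}_{\gamma,m})\succ \mathbf{O}$.

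There is no real obstacle: the argument is purely a sign-plus-pointwise-bound calculation, and it neither requires Property~\ref{skprop}(iv) nor any sharper spectral analysis. The only thing to be careful about is that the lower bound $s_0^{(\gamma)}+2\sum_{j=1}^{m-1}s_j^{(\gamma)}$ is exactly the quantity whose positivity is guaranteed by Property~\ref{skprop}(iii); had the $\cos$ factor been compared against $-1$ instead of $1$, the inequality would have gone the wrong way because of the negative sign of $s_j^{(\gamma)}$.
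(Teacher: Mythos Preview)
Your proof is correct and follows essentially the same approach as the paper: both use the explicit eigenvalue formula \eqref{lambdaicomp}, bound the cosine factor using the sign condition in Property~\ref{skprop}(ii) to obtain $\lambda_i\ge s_0^{(\gamma)}+2\sum_{j=1}^{m-1}s_j^{(\gamma)}$, and then conclude strict positivity via Property~\ref{skprop}(iii). The only cosmetic difference is that the paper routes the estimate through $|s_j^{(\gamma)}\cos(\cdot)|\le |s_j^{(\gamma)}|$ before using $s_j^{(\gamma)}=-|s_j^{(\gamma)}|$, whereas you apply $\cos\theta\le 1$ directly to the non-positive $s_j^{(\gamma)}$; the resulting bound and the logic are identical.
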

   \begin{proof}
   	By \eqref{lambdaicomp}, we see that the eigenvalues of $\tau({\bf S}_{\gamma,m})$ can be expressed as
   	\begin{align*}
   		\lambda_{i}^{(\gamma,m)}=s_0^{(\gamma)}+2\sum\limits_{j=1}^{m-1}s_j^{(\gamma)}\cos\left(\frac{\pi ij}{m+1}\right),\quad i\in 1\wedge m.
   	\end{align*}
   	 From Property \ref{skprop}${\bf (ii)}-{\bf (iii)}$, we see that
   	\begin{align}
   		\lambda_{i}^{(\gamma,m)}&\geq s_0^{(\gamma)}-2\sum\limits_{j=1}^{m-1}\left|s_j^{(\gamma)}\cos\left(\frac{\pi ij}{m+1}\right)\right|\notag\\
   		&\geq s_0^{(\gamma)}-2\sum\limits_{j=1}^{m-1}\left|s_j^{(\gamma)}\right|\notag\\
   		&=s_0^{(\gamma)}+2\sum\limits_{j=1}^{m-1}s_j^{(\gamma)}>0,\quad  i\in 1\wedge m.
   	\end{align}
   	Moreover, $\tau({\bf S}_{\gamma,m})$ is clearly Hermitian. Hence, $\tau({\bf S}_{\gamma,m})\succ {\bf O}$. The proof is complete.
   \end{proof}
   
   With Lemma \ref{tausgammamhpdlm}, we immediately have
   \begin{equation}\label{phpdeq}
   	{\bf P}={\bf I}_J+\eta_x\bar{d}{\bf I}_{M_y}\otimes \tau({\bf S}_{\alpha,M_x})+\eta_y\bar{e} \tau({\bf S}_{\beta,M_y})\otimes{\bf I}_{M_x}\succeq {\bf I}.
   \end{equation}

   \eqref{phpdeq} implies that ${\bf P}^{\frac{1}{2}}$ and ${\bf P}^{-\frac{1}{2}}$ exist.
    To analyze the convergence rate of GMRES for the preconditioned system \eqref{precsystem}, we firstly investigate the convergence behavior of GMRES solver for the following auxiliary system
    \begin{equation}\label{auxsystem}
    {\bf P}^{-\frac{1}{2}}{\bf A}{\bf P}^{-\frac{1}{2}}\hat{\bf u}={\bf P}^{-\frac{1}{2}}{\bf b},
    \end{equation}
    where the solution of \eqref{auxsystem} and the solution of \eqref{precsystem} are related by ${\bf u}={\bf P}^{-\frac{1}{2}}\hat{\bf u}$. The reason is explained in what follows.
    
    The convergence behavior of GMRES is closely related to the Krylov subspace. For a square matrix ${\bf Z}\in \mathbb{R}^{m \times m}$ and a vector ${\bf x}\in \mathbb{R}^{m\times 1}$, a Krylov subspace of degree $j\geq 1$ is defined as follows
    $$\mathcal{K}_j({\bf Z},{\bf x}):=\text{span}\{{\bf x},{\bf Z}{\bf x},{\bf Z}^2{\bf x},\dots,{\bf Z}^{j-1}{\bf x} \}.$$
    For a set $\mathcal{S}$ and a point $z$, we denote
    $$z+\mathcal{S}:=\{z+x|x\in \mathcal{S}\}.$$
    We recall the relation between the iterative solution by GMRES and the Krylov subspace in the following lemma.
    \begin{lemma}\label{krylov}\textnormal{(see, e.g., \cite{saad2003})}
    	For a non-singular $m\times m$ real linear system ${\bf Z} {\bf v} = {\bf w}$, let ${\bf v}_j$ be the iterative solution by GMRES at $j$-th ($j\geq1$) iteration step with ${\bf v}_0$ as initial guess. Then, the $j$-th iteration solution ${\bf v}_j$ minimize the residual error over the Krylov subspace $\mathcal{K}_j({\bf Z},{\bf r}_0)$ with ${\bf r}_0= {\bf w}-{\bf Z}{\bf v}_0,$ i.e.,	
    	$$
    	{\bf v}_{j}=\underset{\mathbf{y} \in \mathbf{v}_{0}+\mathcal{K}_{j}\left(\mathbf{Z}, \mathbf{r}_{0}\right)}{\arg \min }\|\mathbf{w}-\mathbf{Z} \mathbf{y}\|_{2} .
    	$$
    \end{lemma}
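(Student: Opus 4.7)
The plan is to prove Lemma \ref{krylov} via the standard Arnoldi-based analysis of GMRES, since the statement is essentially a restatement of how the algorithm is defined. The idea is to reduce the minimization problem over the affine Krylov subspace to a small least-squares problem on the Hessenberg factor produced by Arnoldi, and then observe that GMRES by construction returns the minimizer.

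First I would recall the Arnoldi process applied to $({\bf Z},{\bf r}_0)$, which produces an orthonormal basis ${\bf V}_j=[{\bf q}_1,{\bf q}_2,\dots,{\bf q}_j]$ of $\mathcal{K}_j({\bf Z},{\bf r}_0)$ with ${\bf q}_1={\bf r}_0/\|{\bf r}_0\|_2$, together with an $(j+1)\times j$ upper Hessenberg matrix $\bar{\bf H}_j$ satisfying the fundamental relation ${\bf Z}{\bf V}_j={\bf V}_{j+1}\bar{\bf H}_j$ and ${\bf V}_{j+1}^{\rm T}{\bf V}_{j+1}={\bf I}$. This is the well-known output of the Arnoldi recursion and is valid as long as no breakdown occurs before step $j$; in the non-singular regime that is assumed here, breakdown before convergence cannot happen.

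Next I would parameterize any point in the affine space as ${\bf y}={\bf v}_0+{\bf V}_j{\bf z}$ for some ${\bf z}\in\mathbb{R}^{j\times 1}$, and compute the residual:
\begin{equation*}
{\bf w}-{\bf Z}{\bf y} = {\bf r}_0-{\bf Z}{\bf V}_j{\bf z} = \|{\bf r}_0\|_2 {\bf V}_{j+1}{\bf e}_1-{\bf V}_{j+1}\bar{\bf H}_j{\bf z} = {\bf V}_{j+1}\bigl(\|{\bf r}_0\|_2{\bf e}_1-\bar{\bf H}_j{\bf z}\bigr).
\end{equation*}
Using the orthonormality of the columns of ${\bf V}_{j+1}$, the Euclidean norm collapses to $\|{\bf w}-{\bf Z}{\bf y}\|_2 = \bigl\|\|{\bf r}_0\|_2{\bf e}_1-\bar{\bf H}_j{\bf z}\bigr\|_2$, so the original minimization over the affine Krylov subspace is equivalent to the small $(j+1)\times j$ least-squares problem $\min_{{\bf z}\in\mathbb{R}^j}\bigl\|\|{\bf r}_0\|_2{\bf e}_1-\bar{\bf H}_j{\bf z}\bigr\|_2$.

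Finally I would close the argument by invoking the definition of GMRES: the algorithm is precisely the method that, at step $j$, computes the Arnoldi factorization above, solves this reduced least-squares problem for the optimizer ${\bf z}^\star$, and returns ${\bf v}_j={\bf v}_0+{\bf V}_j{\bf z}^\star$. Hence ${\bf v}_j$ realises the minimum over ${\bf v}_0+\mathcal{K}_j({\bf Z},{\bf r}_0)$, as claimed. There is no real obstacle here; the only subtlety worth mentioning is that the identification of ${\bf r}_0$ with $\|{\bf r}_0\|_2{\bf V}_{j+1}{\bf e}_1$ depends on the initialization ${\bf q}_1={\bf r}_0/\|{\bf r}_0\|_2$ used in Arnoldi, which is standard in any textbook derivation (e.g., Saad).
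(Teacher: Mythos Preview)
Your argument is correct and is the standard textbook derivation via the Arnoldi relation ${\bf Z}{\bf V}_j={\bf V}_{j+1}\bar{\bf H}_j$. The paper itself does not prove this lemma at all; it is stated as a known fact with a citation to Saad, so there is no in-paper proof to compare against, and your Arnoldi-based justification is exactly the kind of proof the cited reference supplies.
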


    \begin{lemma}\label{residual}
    	Let $\hat{\mathbf{u}}_{0}$ be the initial guess for \eqref{auxsystem}. Let $\mathbf{u}_{0}:=\mathbf{P}^{-\frac{1}{2}} \hat{\mathbf{u}}_{0}$ be the initial guess for \eqref{precsystem}. Let $\mathbf{u}_{j}\left(\hat{\mathbf{u}}_{j}\right.$, respectively $)$ be the jth $(j \geq 1)$ iteration solution derived by applying GMRES solver to \eqref{precsystem} $($\eqref{auxsystem}, respectively $)$ with $\mathbf{u}_{0}$ $($ $\hat{\mathbf{u}}_{0}$, respectively $)$ as initial guess. Then,
    	$$
    	\left\|\mathbf{r}_{j}\right\|_{2} \leq \left\|\hat{\mathbf{r}}_{j}\right\|_{2},\quad j=0,1,2,...
    	$$
    	where $\mathbf{r}_{j}:=\mathbf{P}^{-1} \mathbf{f}-\mathbf{P}^{-1} \mathbf{A} \mathbf{u}_{j}$ $($ $\hat{\mathbf{r}}_{j}:=\mathbf{P}^{-\frac{1}{2}} \mathbf{f}-\mathbf{P}^{-\frac{1}{2}} \mathbf{A} \mathbf{P}^{-\frac{1}{2}}\hat{\mathbf{u}}_{j}$, respectively $)$ denotes the residual vector at $j$-th GMRES iteration for \eqref{precsystem} $($ \eqref{auxsystem}, respectively $)$.
    \end{lemma}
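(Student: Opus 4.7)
The plan is to use the similarity
\[
\mathbf{P}^{-1}\mathbf{A}=\mathbf{P}^{-\frac{1}{2}}\bigl(\mathbf{P}^{-\frac{1}{2}}\mathbf{A}\mathbf{P}^{-\frac{1}{2}}\bigr)\mathbf{P}^{\frac{1}{2}}
\]
to transplant the GMRES iterate $\hat{\mathbf{u}}_j$ of the auxiliary system \eqref{auxsystem} into a (generally non-optimal) candidate iterate for the preconditioned system \eqref{precsystem}, then invoke the GMRES optimality stated in Lemma \ref{krylov} on \eqref{precsystem}, and finally use $\mathbf{P}\succeq\mathbf{I}$ from \eqref{phpdeq} to discard the factor $\mathbf{P}^{-\frac{1}{2}}$ in the resulting bound.

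First, since $\mathbf{u}_0=\mathbf{P}^{-\frac{1}{2}}\hat{\mathbf{u}}_0$, a direct calculation gives $\mathbf{r}_0=\mathbf{P}^{-\frac{1}{2}}\hat{\mathbf{r}}_0$, and therefore $\mathbf{P}^{\frac{1}{2}}\mathbf{r}_0=\hat{\mathbf{r}}_0$. A straightforward induction from the displayed similarity yields $(\mathbf{P}^{-1}\mathbf{A})^{i}=\mathbf{P}^{-\frac{1}{2}}(\mathbf{P}^{-\frac{1}{2}}\mathbf{A}\mathbf{P}^{-\frac{1}{2}})^{i}\mathbf{P}^{\frac{1}{2}}$ for every $i\geq 0$. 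Multiplying by $\mathbf{r}_0$ on the right then gives the key Krylov-subspace correspondence
\[
\mathcal{K}_j(\mathbf{P}^{-1}\mathbf{A},\mathbf{r}_0)=\mathbf{P}^{-\frac{1}{2}}\,\mathcal{K}_j\bigl(\mathbf{P}^{-\frac{1}{2}}\mathbf{A}\mathbf{P}^{-\frac{1}{2}},\hat{\mathbf{r}}_0\bigr).
\]

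Next I would define the candidate $\tilde{\mathbf{u}}_j:=\mathbf{u}_0+\mathbf{P}^{-\frac{1}{2}}(\hat{\mathbf{u}}_j-\hat{\mathbf{u}}_0)$. By Lemma \ref{krylov} applied to the auxiliary system, $\hat{\mathbf{u}}_j-\hat{\mathbf{u}}_0\in\mathcal{K}_j(\mathbf{P}^{-\frac{1}{2}}\mathbf{A}\mathbf{P}^{-\frac{1}{2}},\hat{\mathbf{r}}_0)$, so the correspondence above places $\tilde{\mathbf{u}}_j$ inside the affine Krylov space $\mathbf{u}_0+\mathcal{K}_j(\mathbf{P}^{-1}\mathbf{A},\mathbf{r}_0)$ associated with GMRES on \eqref{precsystem}. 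Using $\mathbf{P}^{-1}\mathbf{A}\mathbf{P}^{-\frac{1}{2}}=\mathbf{P}^{-\frac{1}{2}}(\mathbf{P}^{-\frac{1}{2}}\mathbf{A}\mathbf{P}^{-\frac{1}{2}})$ together with $\mathbf{r}_0=\mathbf{P}^{-\frac{1}{2}}\hat{\mathbf{r}}_0$, a short computation shows that the residual of \eqref{precsystem} evaluated at $\tilde{\mathbf{u}}_j$ equals precisely $\mathbf{P}^{-\frac{1}{2}}\hat{\mathbf{r}}_j$. Applying Lemma \ref{krylov} to the preconditioned system therefore yields
\[
\|\mathbf{r}_j\|_2\leq\bigl\|\mathbf{P}^{-\frac{1}{2}}\hat{\mathbf{r}}_j\bigr\|_2.
\]

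Finally, \eqref{phpdeq} gives $\mathbf{P}\succeq\mathbf{I}$, whence $\mathbf{P}^{-\frac{1}{2}}\preceq\mathbf{I}$ and its spectral norm is at most $1$, so that $\|\mathbf{P}^{-\frac{1}{2}}\hat{\mathbf{r}}_j\|_2\leq\|\hat{\mathbf{r}}_j\|_2$; chaining this with the previous bound proves the claim for $j\geq 1$, while the case $j=0$ follows immediately from $\mathbf{r}_0=\mathbf{P}^{-\frac{1}{2}}\hat{\mathbf{r}}_0$ and $\|\mathbf{P}^{-\frac{1}{2}}\|_2\leq 1$. I do not expect any genuine obstacle. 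The only delicate point is directional: one must build the candidate for \emph{the preconditioned system} out of $\hat{\mathbf{u}}_j$, so that GMRES optimality is used on \eqref{precsystem} and not on \eqref{auxsystem}; the opposite construction would produce $\|\hat{\mathbf{r}}_j\|_2\leq\|\mathbf{P}^{\frac{1}{2}}\mathbf{r}_j\|_2$, which is useless for the desired estimate.
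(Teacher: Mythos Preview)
Your proposal is correct and follows essentially the same approach as the paper's proof: both use the similarity $(\mathbf{P}^{-1}\mathbf{A})^{i}=\mathbf{P}^{-\frac{1}{2}}(\mathbf{P}^{-\frac{1}{2}}\mathbf{A}\mathbf{P}^{-\frac{1}{2}})^{i}\mathbf{P}^{\frac{1}{2}}$ to show that the transplanted iterate $\mathbf{P}^{-\frac{1}{2}}\hat{\mathbf{u}}_j$ (your $\tilde{\mathbf{u}}_j$ equals exactly this, since $\mathbf{u}_0=\mathbf{P}^{-\frac{1}{2}}\hat{\mathbf{u}}_0$) lies in the affine Krylov space for \eqref{precsystem}, then invoke GMRES optimality there and finish with $\mathbf{P}\succeq\mathbf{I}$ from \eqref{phpdeq}. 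Your closing remark about the ``directional'' subtlety is exactly the point of the argument.
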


    \begin{proof}
    	By applying Lemma \ref{krylov} to \eqref{auxsystem}, we see that
    	$$
    	\hat{\mathbf{u}}_{j}-\hat{\mathbf{u}}_{0} \in \mathcal{K}_{j}\left(\mathbf{P}^{-\frac{1}{2}} \mathbf{A} {\bf P}^{-\frac{1}{2}}, \hat{\mathbf{r}}_{0}\right),
    	$$
    	where $\hat{\mathbf{r}}_{0}=\mathbf{P}^{-\frac{1}{2}} \mathbf{f}-\mathbf{P}^{-\frac{1}{2}} \mathbf{A} {\bf P}^{-\frac{1}{2}} \hat{\mathbf{u}}_{0}$. Notice that $\left(\mathbf{P}^{-\frac{1}{2}} \mathbf{A} {\bf P}^{-\frac{1}{2}}\right)^{k}=\mathbf{P}^{\frac{1}{2}}\left(\mathbf{P}^{-1} \mathbf{A}\right)^{k} \mathbf{P}^{-\frac{1}{2}}$ for each $k \geq 0$. Therefore,
    	$$
    	\begin{aligned}
    		\mathcal{K}_{j}\left(\mathbf{P}^{-\frac{1}{2}} \mathbf{A} \mathbf{P}^{-\frac{1}{2}}, \hat{\mathbf{r}}_{0}\right) &=\operatorname{span}\left\{\left(\mathbf{P}^{-\frac{1}{2}} \mathbf{A} \mathbf{P}^{-\frac{1}{2}}\right)^{k}\left(\mathbf{P}^{-\frac{1}{2}} \mathbf{f}-\mathbf{P}^{-\frac{1}{2}} \mathbf{A} \mathbf{P}^{-\frac{1}{2}} \hat{\mathbf{u}}_{0}\right)\right\}_{k=0}^{j-1} \\
    		&=\operatorname{span}\left\{\mathbf{P}^{\frac{1}{2}}\left(\mathbf{P}^{-1} \mathbf{A}\right)^{k} \mathbf{P}^{-\frac{1}{2}}\left(\mathbf{P}^{-\frac{1}{2}} \mathbf{f}-\mathbf{P}^{-\frac{1}{2}} \mathbf{A} {\bf P}^{-\frac{1}{2}} \hat{\mathbf{u}}_{0}\right)\right\}_{k=0}^{j-1} \\
    		&=\operatorname{span}\left\{\mathbf{P}^{\frac{1}{2}}\left(\mathbf{P}^{-1} \mathbf{A}\right)^{k}\left(\mathbf{P}^{-1} \mathbf{f}-\mathbf{P}^{-1} \mathbf{A} \mathbf{u}_{0}\right)\right\}_{k=0}^{j-1}
    	\end{aligned}
    	$$
    	where the last equality comes from the fact that $\mathbf{u}_{0}=\mathbf{P}^{-\frac{1}{2}} \hat{\mathbf{u}}_{0}$. That means
    	\begin{equation*}
    		\hat{\mathbf{u}}_{j}-\hat{\mathbf{u}}_{0}\in\operatorname{span}\left\{\mathbf{P}^{\frac{1}{2}}\left(\mathbf{P}^{-1} \mathbf{A}\right)^{k}\left(\mathbf{P}^{-1} \mathbf{f}-\mathbf{P}^{-1} \mathbf{A} \mathbf{u}_{0}\right)\right\}_{k=0}^{j-1}
    	\end{equation*}
    	Therefore,
    	$$
    	\mathbf{P}^{-\frac{1}{2}} \hat{\mathbf{u}}_{j}-\mathbf{u}_{0}=\mathbf{P}^{-\frac{1}{2}}\left(\hat{\mathbf{u}}_{j}-\hat{\mathbf{u}}_{0}\right) \in \operatorname{span}\left\{\left(\mathbf{P}^{-1} \mathbf{A}\right)^{k}\left(\mathbf{P}^{-1} \mathbf{f}-\mathbf{P}^{-1} \mathbf{A} \mathbf{u}_{0}\right)\right\}_{k=0}^{j-1}=\mathcal{K}_{j}\left(\mathbf{P}^{-1} \mathbf{A}, \mathbf{r}_{0}\right),
    	$$
    	where $\mathbf{r}_{0}=\mathbf{P}^{-1} \mathbf{f}-\mathbf{P}^{-1} \mathbf{A u}_{0}$. In other words,
    	$$
    	\mathbf{P}^{-\frac{1}{2}} \hat{\mathbf{u}}_{j} \in \mathbf{u}_{0}+\mathcal{K}_{j}\left(\mathbf{P}^{-1} \mathbf{A}, \mathbf{r}_{0}\right) .
    	$$
    	By applying Lemma \ref{krylov} to \eqref{precsystem}, we know that
    	$$
    	\mathbf{u}_{j}=\underset{\mathbf{y} \in \mathbf{u}_{0}+\mathcal{K}_{j}\left(\mathbf{P}^{-1} \mathbf{A}, \mathbf{r}_{0}\right)}{\arg \min }\left\|\mathbf{P}^{-1} \mathbf{f}-\mathbf{P}^{-1} \mathbf{A} \mathbf{y}\right\|_{2} .
    	$$
    	Therefore,
    
    	\begin{align*}
    		\left\|\mathbf{r}_{j}\right\|_{2}=\left\|\mathbf{P}^{-1} \mathbf{f}-\mathbf{P}^{-1} \mathbf{A} \mathbf{u}_{j}\right\|_{2} & \leq\left\|\mathbf{P}^{-1} \mathbf{f}-\mathbf{P}^{-1} \mathbf{A} \mathbf{P}^{-\frac{1}{2}} \hat{\mathbf{u}}_{j}\right\|_{2} \\
    		&=\left\|\mathbf{P}^{-\frac{1}{2}} \hat{\mathbf{r}}_{j}\right\|_{2} \\
    		&=\sqrt{\hat{\mathbf{r}}_{j}^{\mathrm{T}} \mathbf{P}^{-1} \hat{\mathbf{r}}_{j}} \\
    		&\leq ||\hat{\bf r}_j||_2,
    	\end{align*}
    	where the last inequality comes from \eqref{phpdeq}.
    \end{proof}
    
    Lemma \ref{residual} shows that GMRES solver for \eqref{precsystem} converges always faster than GMRES solver for \eqref{auxsystem} whenever the initial guess ${\bf u}_0$ of \eqref{precsystem} and the initial guess $\hat{\bf u}_0$ of \eqref{auxsystem} are related by ${\bf u}_0:={\bf P}^{-\frac{1}{2}}\hat{\bf u}_0$. Because of this reason, it suffices to investigate the convergence behavior of GMRES solver on the auxiliary system \eqref{auxsystem}.

    For a real square matrix ${\bf Z}$, denote
    \begin{equation*}
    	\mathcal{H}({\bf Z}):=\frac{{\bf Z}+{\bf Z}^{\rm T}}{2},\quad \mathcal{S}({\bf Z}):=\frac{{\bf Z}-{\bf Z}^{\rm T}}{2}.
    \end{equation*}
    For a square matrix ${\bf C}$, denote by $\rho({\bf C})$, the spectral of ${\bf C}$.
  
  \begin{lemma}\textnormal{(see \cite[Proposition 7.3]{elman2014finite})}\label{gmrescvglm}
  	Let ${\bf Z}{\bf v}={\bf w}$ be a real square linear system with $\mathcal{H}({\bf Z})\succ{\bf O}$. Then, the residuals of the iterates generated by applying (restarted or non-restarted) GMRES to solve ${\bf Z}{\bf v}={\bf w}$ satisfy
  	\begin{equation*}
  		||{\bf r}_{k}||_2\leq\left(1-\frac{\lambda_{\min}(\mathcal{H}({\bf Z}))^2}{\lambda_{\min}(\mathcal{H}({\bf Z}))\lambda_{\max}(\mathcal{H}({\bf Z}))+\rho(\mathcal{S}({\bf Z}))^2}\right)^{k/2}||{\bf r}_{0}||_2,
  	\end{equation*}
  	where ${\bf r}_{k}={\bf w}-{\bf Z}{\bf v}_k$ with ${\bf v}_k$ $(k\geq 1)$ being the iterative solution at $k$th GMRES iteration and ${\bf v}_0$ being an arbitrary initial guess.
  \end{lemma}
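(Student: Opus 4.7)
The plan is to carry out the classical Elman-type argument, exploiting the residual-minimization property of GMRES from Lemma \ref{krylov} together with the positive definiteness of $\mathcal{H}({\bf Z})$. First, I apply Lemma \ref{krylov} with the admissible degree-one polynomial $p(z)=1-\alpha z$ (which satisfies $p(0)=1$), so that $\|{\bf r}_k\|_2\le\min_{\alpha\in\mathbb{R}}\|({\bf I}-\alpha{\bf Z}){\bf r}_{k-1}\|_2$ at every step. Using the identity ${\bf r}^{\rm T}{\bf Z}{\bf r}={\bf r}^{\rm T}\mathcal{H}({\bf Z}){\bf r}$ (the skew part $\mathcal{S}({\bf Z})$ contributes nothing to any real quadratic form), expanding and minimizing in $\alpha$ produces the per-step contraction
\[
\|{\bf r}_k\|_2^2\le\|{\bf r}_{k-1}\|_2^2\left(1-\frac{({\bf r}_{k-1}^{\rm T}\mathcal{H}({\bf Z}){\bf r}_{k-1})^2}{\|{\bf r}_{k-1}\|_2^2\,\|{\bf Z}{\bf r}_{k-1}\|_2^2}\right),
\]
where the optimal choice is $\alpha^{\ast}={\bf r}_{k-1}^{\rm T}\mathcal{H}({\bf Z}){\bf r}_{k-1}/\|{\bf Z}{\bf r}_{k-1}\|_2^2$, which is well-defined since $\mathcal{H}({\bf Z})\succ{\bf O}$ forces ${\bf Z}$ to be nonsingular.

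Second, I reduce the theorem to proving the uniform inequality
\[
\frac{({\bf r}^{\rm T}\mathcal{H}({\bf Z}){\bf r})^2}{\|{\bf r}\|_2^2\,\|{\bf Z}{\bf r}\|_2^2}\ge\frac{\lambda_{\min}(\mathcal{H}({\bf Z}))^2}{\lambda_{\min}(\mathcal{H}({\bf Z}))\lambda_{\max}(\mathcal{H}({\bf Z}))+\rho(\mathcal{S}({\bf Z}))^2},\qquad\forall\,{\bf r}\ne{\bf 0}.
\]
The numerator is handled by the Rayleigh estimate ${\bf r}^{\rm T}\mathcal{H}({\bf Z}){\bf r}\ge\lambda_{\min}(\mathcal{H}({\bf Z}))\|{\bf r}\|_2^2$. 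For the denominator I decompose ${\bf Z}{\bf r}=\mathcal{H}({\bf Z}){\bf r}+\mathcal{S}({\bf Z}){\bf r}$ and bound $\|\mathcal{H}({\bf Z}){\bf r}\|_2^2\le\lambda_{\max}(\mathcal{H}({\bf Z}))\,{\bf r}^{\rm T}\mathcal{H}({\bf Z}){\bf r}$ (using $\mathcal{H}({\bf Z})^2\preceq\lambda_{\max}(\mathcal{H}({\bf Z}))\mathcal{H}({\bf Z})$) together with $\|\mathcal{S}({\bf Z}){\bf r}\|_2^2\le\rho(\mathcal{S}({\bf Z}))^2\|{\bf r}\|_2^2$.

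The main obstacle is the indefinite cross term $2{\bf r}^{\rm T}\mathcal{H}({\bf Z})\mathcal{S}({\bf Z}){\bf r}$ in the expansion of $\|{\bf Z}{\bf r}\|_2^2$, whose sign is not \emph{a priori} controlled. My approach is to factor $\mathcal{H}({\bf Z})\mathcal{S}({\bf Z})=\mathcal{H}({\bf Z})^{1/2}\!\cdot\!\mathcal{H}({\bf Z})^{1/2}\mathcal{S}({\bf Z})$ and apply Cauchy-Schwarz, obtaining $|{\bf r}^{\rm T}\mathcal{H}({\bf Z})\mathcal{S}({\bf Z}){\bf r}|\le\sqrt{\lambda_{\max}(\mathcal{H}({\bf Z}))\,{\bf r}^{\rm T}\mathcal{H}({\bf Z}){\bf r}}\cdot\rho(\mathcal{S}({\bf Z}))\|{\bf r}\|_2$; a weighted Young's inequality then absorbs this quantity into a linear combination $c_1{\bf r}^{\rm T}\mathcal{H}({\bf Z}){\bf r}+c_2\|{\bf r}\|_2^2$, with the weights tuned so that minimization of the resulting ratio over the Rayleigh quotient $\mu={\bf r}^{\rm T}\mathcal{H}({\bf Z}){\bf r}/\|{\bf r}\|_2^2\in[\lambda_{\min}(\mathcal{H}({\bf Z})),\lambda_{\max}(\mathcal{H}({\bf Z}))]$, whose infimum is attained at $\mu=\lambda_{\min}(\mathcal{H}({\bf Z}))$, reproduces precisely the denominator $\lambda_{\min}(\mathcal{H}({\bf Z}))\lambda_{\max}(\mathcal{H}({\bf Z}))+\rho(\mathcal{S}({\bf Z}))^2$. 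Tuning the Young constants so as to recover this sharp combination, rather than a crude $(\lambda_{\max}(\mathcal{H}({\bf Z}))+\rho(\mathcal{S}({\bf Z})))^2$-type bound, is the delicate calculation. Iterating the per-step contraction $k$ times and taking a square root then delivers the claimed geometric decay of the residual.
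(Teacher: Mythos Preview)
The paper does not give its own proof of this lemma: it is quoted verbatim as \cite[Proposition~7.3]{elman2014finite} and used as a black box in the proof of Theorem~\ref{finalthm}. So there is nothing in the paper to compare your argument against. Your outline is the classical Elman minimal-residual argument and the first displayed contraction
\[
\|{\bf r}_k\|_2^2\le\|{\bf r}_{k-1}\|_2^2\Bigl(1-\tfrac{({\bf r}_{k-1}^{\rm T}\mathcal H({\bf Z}){\bf r}_{k-1})^2}{\|{\bf r}_{k-1}\|_2^2\,\|{\bf Z}{\bf r}_{k-1}\|_2^2}\Bigr)
\]
is obtained exactly as you say.

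There is, however, a gap in the second half. Write $a=\lambda_{\min}(\mathcal H({\bf Z}))$, $b=\lambda_{\max}(\mathcal H({\bf Z}))$, $s=\rho(\mathcal S({\bf Z}))$, and $\mu={\bf r}^{\rm T}\mathcal H({\bf Z}){\bf r}/\|{\bf r}\|_2^2$. Your Cauchy--Schwarz step gives $2|{\bf r}^{\rm T}\mathcal H({\bf Z})\mathcal S({\bf Z}){\bf r}|\le 2s\sqrt{b\mu}\,\|{\bf r}\|_2^2$, and Young's inequality $2XY\le\epsilon X^2+\epsilon^{-1}Y^2$ then yields
\[
\|{\bf Z}{\bf r}\|_2^2\le(1+\epsilon)b\mu\,\|{\bf r}\|_2^2+(1+\epsilon^{-1})s^2\|{\bf r}\|_2^2.
\]
Minimising the right-hand side over $\epsilon>0$ gives exactly $(\sqrt{b\mu}+s)^2\|{\bf r}\|_2^2$, and the resulting lower bound on the contraction ratio is $\mu^2/(\sqrt{b\mu}+s)^2$, which is increasing in $\mu$ and attains its minimum $a^2/(\sqrt{ab}+s)^2$ at $\mu=a$. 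Thus the \emph{best} constant your Cauchy--Schwarz--Young route can deliver is
\[
1-\frac{a^2}{ab+2s\sqrt{ab}+s^2},
\]
which is strictly weaker than the stated $1-\dfrac{a^2}{ab+s^2}$ whenever $s>0$. No tuning of the Young parameter recovers the missing $2s\sqrt{ab}$; the loss occurs already at the Cauchy--Schwarz step on the cross term. To reach the exact constant in the lemma one needs a sharper treatment of $2{\bf r}^{\rm T}\mathcal H({\bf Z})\mathcal S({\bf Z}){\bf r}$ (for instance, exploiting the identity $({\bf r},\mathcal S({\bf Z}){\bf r})=0$ to replace $\mathcal H({\bf Z})$ by $\mathcal H({\bf Z})-cI$ before estimating, or a direct quadratic-form argument), not the generic Cauchy--Schwarz bound you propose.
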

  
  To apply Lemma \ref{gmrescvglm} to the auxiliary system \eqref{auxsystem}, we need to show ${\bf P}^{-\frac{1}{2}}{\bf A}  {\bf P}^{-\frac{1}{2}}\succ{\bf O}$, to estimate lower and upper bounds of eigenvalues of $\mathcal{H}(  {\bf P}^{-\frac{1}{2}}{\bf A}  {\bf P}^{-\frac{1}{2}})$ and upper bound of spectral radius of $\mathcal{S}(  {\bf P}^{-\frac{1}{2}}{\bf A}  {\bf P}^{-\frac{1}{2}})$.

  For a square matrix ${\bf C}$, denote by $\sigma({\bf C})$, the spectrum of ${\bf C}$.
  Referring to the discussion in \cite{huangxin2022}, one can show the following lemma
  \begin{lemma}\label{taumatprecspectralm}
  	For any $\gamma\in(1,2)$ and any $m\in\mathbb{N}^{+}$, ${\bf O}\prec \frac{1}{2}\tau({\bf S}_{\gamma,m})\prec {\bf S}_{\gamma,m}\prec \frac{3}{2}\tau({\bf S}_{\gamma,m})$.
  \end{lemma}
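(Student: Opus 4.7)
The plan is to reduce the two-sided bound to a single operator inequality on the Hankel correction, then control that correction using the sign and monotonicity properties of $\{s_k^{(\gamma)}\}$.

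First I would unpack the statement. Writing $\mathbf{H}_m=\mathbf{S}_{\gamma,m}-\tau(\mathbf{S}_{\gamma,m})$ for the Hankel defect in \eqref{tauopdef}, a direct algebraic rearrangement gives
\begin{align*}
\mathbf{S}_{\gamma,m}-\tfrac{1}{2}\tau(\mathbf{S}_{\gamma,m}) &= \tfrac{1}{2}\tau(\mathbf{S}_{\gamma,m})+\mathbf{H}_m,\\
\tfrac{3}{2}\tau(\mathbf{S}_{\gamma,m})-\mathbf{S}_{\gamma,m} &= \tfrac{1}{2}\tau(\mathbf{S}_{\gamma,m})-\mathbf{H}_m.
\end{align*}
So the whole claim reduces to the single symmetric statement
$$-\tfrac{1}{2}\tau(\mathbf{S}_{\gamma,m})\prec \mathbf{H}_m \prec \tfrac{1}{2}\tau(\mathbf{S}_{\gamma,m}).$$
Since Lemma \ref{tausgammamhpdlm} already gives $\tau(\mathbf{S}_{\gamma,m})\succ\mathbf{O}$, its square root is well defined and the claim is equivalent to showing the symmetric matrix $\mathbf{K}_m:=\tau(\mathbf{S}_{\gamma,m})^{-1/2}\mathbf{H}_m\tau(\mathbf{S}_{\gamma,m})^{-1/2}$ has spectral radius strictly less than $1/2$.

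Second, I would try to obtain this spectral bound by constructing two auxiliary symmetric Toeplitz matrices $\mathbf{T}_m^{\pm}$ built from suitably shifted copies of the sequence $\{s_k^{(\gamma)}\}$ such that $\tau(\mathbf{S}_{\gamma,m})\pm 2\mathbf{H}_m$ equals $\mathbf{T}_m^{\pm}$ (or differs from $\mathbf{T}_m^{\pm}$ by a manifestly positive semidefinite piece). The point is that $\mathbf{H}_m$ is built only from the tail entries $s_2^{(\gamma)},s_3^{(\gamma)},\ldots,s_{m-1}^{(\gamma)}$, while the diagonal of $\tau(\mathbf{S}_{\gamma,m})$ is $s_0^{(\gamma)}$ and its off-diagonal sum is governed by the same tail; using Property \ref{skprop}(ii) (all tail entries non-positive) together with (iv) (monotonicity $s_k^{(\gamma)}\le s_{k+1}^{(\gamma)}$), one can bound each row of $\mathbf{H}_m$ entrywise by half of the corresponding row of $\tau(\mathbf{S}_{\gamma,m})$. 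Combining this with the eigenvalue formula \eqref{lambdaicomp} and the strict positivity $s_0^{(\gamma)}+2\sum_{k=1}^{m-1}s_k^{(\gamma)}>0$ from Property \ref{skprop}(iii), one reaches the desired strict inequality on the eigenvalues of the $\tau$-diagonalized forms.

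Alternatively — and probably cleaner — I would mimic the argument of \cite{huangxin2022}. There one exploits the fact that $\tau(\mathbf{S}_{\gamma,m})$ itself is a symmetric Toeplitz matrix whose symbol on the discrete sine grid is positive and at least half as large as the corresponding symbol evaluations one would use to dominate $\mathbf{H}_m$. Concretely, since $\mathbf{H}_m$ is persymmetric and Hankel with non-positive entries whose magnitudes decay (Property (iv)), one can write $\mathbf{H}_m=-\mathbf{J}_m\mathbf{R}_m\mathbf{J}_m$ where $\mathbf{J}_m$ is the flip matrix and $\mathbf{R}_m$ is a positive semidefinite Toeplitz-like piece controlled by $\tau(\mathbf{S}_{\gamma,m})$; then the Cauchy–Schwarz-type estimate $\mathbf{x}^{\mathrm{T}}\mathbf{H}_m\mathbf{x}\le \tfrac{1}{2}\mathbf{x}^{\mathrm{T}}\tau(\mathbf{S}_{\gamma,m})\mathbf{x}$ (and the analogous lower bound) follows.

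The main obstacle will be the matrix inequality $|\mathbf{H}_m|\prec\tfrac{1}{2}\tau(\mathbf{S}_{\gamma,m})$ itself: Property \ref{skprop}(ii)–(iv) give pointwise control of entries but not, a priori, control of the symmetric ordering of matrices. Converting the pointwise/coefficient information into the operator inequality is the real content, and I expect to spend most of the effort there — either by a careful Schur-complement/block-matrix decomposition that keeps track of how the tail of $\{s_k^{(\gamma)}\}$ contributes to both sides, or by constructing explicit positive combinations of $\tau(\mathbf{S}_{\gamma,m})\pm\mathbf{H}_m$ that are diagonalizable by the sine transform and whose eigenvalues can be read off via \eqref{lambdaicomp}.
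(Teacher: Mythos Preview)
Your reduction is exactly right and matches the paper: since $\tau(\mathbf{S}_{\gamma,m})\succ\mathbf{O}$, the claim is equivalent to $\rho\bigl(\tau(\mathbf{S}_{\gamma,m})^{-1}\mathbf{H}_m\bigr)<\tfrac12$. But the concrete routes you list after that point each have a structural problem, and none of them is what the paper actually does.

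First, $\tau(\mathbf{S}_{\gamma,m})\pm 2\mathbf{H}_m$ is a Toeplitz-plus-Hankel matrix, not a Toeplitz matrix, so there is no hope of writing it as a $\mathbf{T}_m^{\pm}$ of the kind you describe, nor of reading off its eigenvalues from the sine-transform formula \eqref{lambdaicomp}; that formula applies only to $\tau$-matrices, and $\mathbf{H}_m$ is not one. Second, the factorisation $\mathbf{H}_m=-\mathbf{J}_m\mathbf{R}_m\mathbf{J}_m$ does not produce a Hankel matrix out of a Toeplitz $\mathbf{R}_m$ (two flips cancel), so that decomposition does not match the structure either. Third, an entrywise bound of the rows of $\mathbf{H}_m$ by half the rows of $\tau(\mathbf{S}_{\gamma,m})$ does not by itself give an operator inequality, because the off-diagonal entries of $\tau(\mathbf{S}_{\gamma,m})$ can have either sign after the Hankel correction.

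The missing idea is much more elementary: a Gershgorin argument applied directly to the \emph{generalized} eigenvalue problem $\mathbf{H}_m\mathbf{z}=\lambda\,\tau(\mathbf{S}_{\gamma,m})\mathbf{z}$. Write $p_{ij}$ for the entries of $\tau(\mathbf{S}_{\gamma,m})$ and $h_{ij}$ for those of $\mathbf{H}_m$. Using Properties \ref{skprop}(ii) and (iv) one checks that $h_{ij}\le 0$ everywhere, $p_{ii}>0$, and $p_{ij}\le 0$ for $i\ne j$. Pick the row $k$ where $|z_k|=\|\mathbf{z}\|_\infty$; the eigenvalue equation at row $k$ then gives
\[
|\lambda|\Bigl(|p_{kk}|-\sum_{j\ne k}|p_{kj}|\Bigr)\le \sum_{j}|h_{kj}|.
\]
The key scalar inequality, obtained by unwinding the definitions and using Property \ref{skprop}(iii), is
\[
|p_{kk}|-\sum_{j\ne k}|p_{kj}|-2\sum_j|h_{kj}|
= s_0^{(\gamma)}+\sum_{j\ne k}s_{|k-j|}^{(\gamma)}+\sum_j h_{kj}
\ge s_0^{(\gamma)}+2\sum_{j=1}^{m-1}s_j^{(\gamma)}>0,
\]
so the diagonal-dominance excess of $\tau(\mathbf{S}_{\gamma,m})$ is at least twice the absolute row sum of $\mathbf{H}_m$, and hence $|\lambda|\le\tfrac12$. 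This is the whole argument; there is no need for symbols, Schur complements, or auxiliary Toeplitz matrices. Your row-bound intuition was pointing in the right direction, but the correct formalisation is this generalized Gershgorin estimate rather than any of the operator-theoretic detours you sketched.
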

 \begin{proof}
 	See the Appendix \ref{taumatpreclmproof}.
 \end{proof}

 Denote
  \begin{equation}\label{intermediateprec}
  	\tilde{\bf P}:={\bf I}_J+\eta_x\tilde{\bf A}_x+\eta_y\tilde{\bf A}_y,
  \end{equation}
  with
  \begin{equation*}
  	\tilde{\bf A}_x:=\bar{d}{\bf I}_{M_y}\otimes {\bf S}_{\alpha,M_x},\quad \tilde{\bf A}_y:=\bar{e} {\bf S}_{\beta,M_y}\otimes {\bf I}_{M_x}
  \end{equation*}
  Lemma \ref{tausgammamhpdlm} implies that
  \begin{align}
  {\bf O}\prec	\frac{1}{2}{\bf P}&= \frac{1}{2}[{\bf I}_J+\eta_x\bar{d}{\bf I}_{M_y}\otimes\tau({\bf S}_{\alpha,M_x})+\eta_y\bar{e}\tau({\bf S}_{\beta,M_y})\otimes {\bf I}_{M_x}]\notag\\
  	&\prec \underbrace{ {\bf I}_J+\eta_x\bar{d}{\bf I}_{M_y}\otimes{\bf S}_{\alpha,M_x}+\eta_y\bar{e}{\bf S}_{\beta,M_y}\otimes {\bf I}_{M_x}}_{:=\tilde{\bf P}}\notag\\
  	&\prec {\bf I}_J+(3/2)\eta_x\bar{d}{\bf I}_{M_y}\otimes\tau({\bf S}_{\alpha,M_x})+(3/2)\eta_y\bar{e}\tau({\bf S}_{\beta,M_y})\otimes {\bf I}_{M_x}\prec \frac{3}{2}{\bf P}\label{ptildpcontorleq}.
  \end{align}
  \eqref{ptildpcontorleq} means that ${\bf P}$ is a good approximation to $\tilde{\bf P}$, while $\tilde{\bf P}$ is obtained by replacing ${\bf D}$ and ${\bf E}$ appearing in ${\bf A}$ with their mean values. That explains why we choose ${\bf P}$ as preconditioner of ${\bf A}$.
  
 Let $\lambda_{*}$ be an eigenvalue of $\mathcal{S}({\bf P}^{-\frac{1}{2}}{\bf A}{\bf P}^{-\frac{1}{2}})$ with $|\lambda_{*}|=\rho(\mathcal{S}({\bf P}^{-\frac{1}{2}}{\bf A}{\bf P}^{-\frac{1}{2}}))$. Clearly, $\mathcal{S}({\bf P}^{-\frac{1}{2}}{\bf A}{\bf P}^{-\frac{1}{2}})={\bf P}^{-\frac{1}{2}}\mathcal{S}({\bf A}){\bf P}^{-\frac{1}{2}}$. By matrix similarity, we have $\sigma({\bf P}^{-\frac{1}{2}}\mathcal{S}({\bf A}){\bf P}^{-\frac{1}{2}})=\sigma({\bf P}^{-1}\mathcal{S}({\bf A}))$. Therefore, $\lambda_{*}$ is also an eigenvalue of ${\bf P}^{-1}\mathcal{S}({\bf A})$. Let ${\bf z}_{0}$ be the eigenvector of ${\bf P}^{-1}\mathcal{S}({\bf A})$ corresponding to the eigenvalue $\lambda_{*}$. We then have
 \begin{equation*}
\mathcal{S}({\bf A}){\bf z}_{0}=\lambda_{*}{\bf P}{\bf z}_{0}\Longrightarrow \lambda_{*}=\frac{{\bf z}_{0}^{*}\mathcal{S}({\bf A}){\bf z}_{0}}{{\bf z}_{0}^{*}{\bf P}{\bf z}_{0}}.
 \end{equation*}
 With \eqref{ptildpcontorleq}, we have
 \begin{align}
\rho(\mathcal{S}({\bf P}^{-\frac{1}{2}}{\bf A}{\bf P}^{-\frac{1}{2}}))=|\lambda_{*}|=\frac{|{\bf z}_{0}^{*}\mathcal{S}({\bf A}){\bf z}_{0}|}{{\bf z}_{0}^{*}{\bf P}{\bf z}_{0}}&=\frac{|{\bf z}_{0}^{*}\mathcal{S}({\bf A}){\bf z}_{0}|}{{\bf z}_{0}^{*}\tilde{\bf P}{\bf z}_{0}}\times \frac{{\bf z}_{0}^{*}\tilde{\bf P}{\bf z}_{0}}{{\bf z}_{0}^{*}{\bf P}{\bf z}_{0}}\notag\\
&\leq \frac{3}{2}\max \limits_{0\neq {\bf z}\in\mathbb{C}^{J\times J}}\frac{|{\bf z}^{*}\mathcal{S}({\bf A}){\bf z}|}{{\bf z}^{*}\tilde{\bf P}{\bf z}}.\label{skewpartcontrl1}
 \end{align}
 Similarly, one can show that
 \begin{align}
 &	\lambda_{\max}(\mathcal{H}({\bf P}^{-\frac{1}{2}}{\bf A}{\bf P}^{-\frac{1}{2}}))\leq \frac{3}{2} \max \limits_{0\neq {\bf z}\in\mathbb{R}^{J\times J}}\frac{{\bf z}^{\rm T}\mathcal{H}({\bf A}){\bf z}}{{\bf z}^{\rm T}\tilde{\bf P}{\bf z}},\label{hpartubcontrol1}\\
 &		\lambda_{\min}(\mathcal{H}({\bf P}^{-\frac{1}{2}}{\bf A}{\bf P}^{-\frac{1}{2}}))\geq \frac{1}{2} \min \limits_{0\neq {\bf z}\in\mathbb{R}^{J\times J}}\frac{{\bf z}^{\rm T}\mathcal{H}({\bf A}){\bf z}}{{\bf z}^{\rm T}\tilde{\bf P}{\bf z}}.\label{hpartlbcontrol1}
 \end{align}
 \eqref{skewpartcontrl1}--\eqref{hpartlbcontrol1} show that to estimate upper bounds of  $\lambda_{\max}(\mathcal{H}( {\bf P}^{-\frac{1}{2}}{\bf A}{\bf P}^{-\frac{1}{2}}))$ and $ \rho(\mathcal{S}({\bf P}^{-\frac{1}{2}}{\bf A}{\bf P}^{-\frac{1}{2}}))$ and a  lower bound of  $\lambda_{\min}(\mathcal{H}( {\bf P}^{-\frac{1}{2}}{\bf A}{\bf P}^{-\frac{1}{2}}))$, it suffices to estimate upper bounds of $\max \limits_{0\neq {\bf z}\in\mathbb{R}^{J\times J}}\frac{{\bf z}^{\rm T}\mathcal{H}({\bf A}){\bf z}}{{\bf z}^{\rm T}\tilde{\bf P}{\bf z}}$ and $\max \limits_{0\neq {\bf z}\in\mathbb{C}^{J\times J}}\frac{|{\bf z}^{*}\mathcal{S}({\bf A}){\bf z}|}{{\bf z}^{*}\tilde{\bf P}{\bf z}}$ and to estimate a lower bound of $\min\limits_{0\neq {\bf z}\in\mathbb{R}^{J\times J}}\frac{{\bf z}^{\rm T}\mathcal{H}({\bf A}){\bf z}}{{\bf z}^{\rm T}\tilde{\bf P}{\bf z}}$.

  Property \ref{skprop}${\bf (ii)}$-${\bf (iii)}$ together with the well-known Gershgorin’s Theorem immediately imply that  ${\bf S}_{\gamma,M}\succ {\bf O}$ for any $M\geq 1$ and $\gamma\in(1,2)$, i.e.,
 \begin{equation}\label{sgamhpd}
{\bf S}_{\gamma,M}\succ {\bf O},\quad \forall\gamma\in(1,2),\quad \forall M\geq 1.
 \end{equation}
  
  	For a diagonal matrix ${\bf Z}=\diag(z_1,z_2,...,z_m)\in\mathbb{R}^{m\times m}$, denote
  \begin{equation*}
  \max({\bf Z})=\max\limits_{1\leq i\leq m}z_i,\quad	\min({\bf Z})=\min\limits_{1\leq i\leq m}z_i, \quad\nabla({\bf Z}):=\max\limits_{1\leq i,j\leq m,~i\neq j}\frac{|z_i-z_j|}{|i-j|}.
  \end{equation*}
  For a symmetric matrix ${\bf H}\in\mathbb{R}^{m\times m}$ and a non-negative diagonal matrix ${\bf Z}\in\mathbb{R}^{m\times m}$, denote
  \begin{equation*}
  	\Delta_{\bf H}({\bf Z}):={\bf Z}{\bf H}+{\bf H}{\bf Z}-2{\bf Z}^{\frac{1}{2}}{\bf H}{\bf Z}^{\frac{1}{2}}.
  \end{equation*}
  \begin{lemma}\label{resmatlemm}
  	Let ${\bf Z}=\diag(z_1,z_2,...,z_M)\in\mathbb{R}^{M\times M}$. Let $\tilde{b}>0$ be  such that 
  	\begin{equation*}
    \nabla({\bf Z})\leq \frac{\tilde{b}}{M+1}.
  	\end{equation*}
  	Assume that there exists  $\check{b}>0$ such that $\min({\bf Z})\geq \check{b}$.
  	Then, 
  	\begin{equation*}
  	||\Delta_{{\bf S}_{\gamma,M}}({\bf Z})||_2\leq\frac{\mu_{\gamma}(||\{s_{k}^{({\gamma})}\}||_{\mathcal{D}_{{\gamma}}},\tilde{b},\check{b}) }{(M+1)^{\gamma}},
  	\end{equation*}
  	where ${\bf S}_{\gamma,M}$ is defined in \eqref{sgamamdef},
  	\begin{equation*}
  	\mu_{\gamma}(x,y,z):=\frac{xy^2}{2z(2-\gamma)},\quad x,y,z>0,\quad\gamma\in(1,2).
  	\end{equation*}
  \end{lemma}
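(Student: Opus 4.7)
My plan is to compute the $(i,j)$-entry of $\Delta_{{\bf S}_{\gamma,M}}({\bf Z})$ in closed form and then bound the spectral norm via the (symmetric) maximum row sum, exploiting the decay class $\mathcal{D}_\gamma$. The key algebraic observation is that since ${\bf Z}$ is diagonal, the $(i,j)$-entry of $\Delta_{\bf H}({\bf Z}) = {\bf Z}{\bf H} + {\bf H}{\bf Z} - 2{\bf Z}^{1/2}{\bf H}{\bf Z}^{1/2}$ is
\begin{equation*}
\bigl[\Delta_{\bf H}({\bf Z})\bigr]_{ij} = h_{ij}\bigl(z_i + z_j - 2\sqrt{z_i z_j}\bigr) = h_{ij}\bigl(\sqrt{z_i} - \sqrt{z_j}\bigr)^2.
\end{equation*}
Taking ${\bf H}={\bf S}_{\gamma,M}$ gives $\bigl[\Delta_{{\bf S}_{\gamma,M}}({\bf Z})\bigr]_{ij} = s^{(\gamma)}_{|i-j|}\bigl(\sqrt{z_i}-\sqrt{z_j}\bigr)^2$, which is a symmetric matrix.

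Next, I would estimate each entry. Writing $\sqrt{z_i} - \sqrt{z_j} = (z_i-z_j)/(\sqrt{z_i}+\sqrt{z_j})$, the assumptions $\nabla({\bf Z}) \le \tilde b/(M+1)$ and $\min({\bf Z}) \ge \check b$ yield
\begin{equation*}
\bigl(\sqrt{z_i}-\sqrt{z_j}\bigr)^2 \le \frac{(z_i-z_j)^2}{4\check b} \le \frac{\tilde b^{\,2}(i-j)^2}{4\check b (M+1)^2}.
\end{equation*}
Property~\ref{skprop}{\bf (i)} gives $|s^{(\gamma)}_k| \le \|\{s^{(\gamma)}_k\}\|_{\mathcal{D}_\gamma}(1+k)^{-(1+\gamma)}$, so
\begin{equation*}
\bigl|\bigl[\Delta_{{\bf S}_{\gamma,M}}({\bf Z})\bigr]_{ij}\bigr| \le \frac{\tilde b^{\,2}\,\|\{s^{(\gamma)}_k\}\|_{\mathcal{D}_\gamma}}{4\check b (M+1)^2}\cdot \frac{(i-j)^2}{(1+|i-j|)^{1+\gamma}}.
\end{equation*}

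Then I would pass to the spectral norm via $\|\Delta_{{\bf S}_{\gamma,M}}({\bf Z})\|_2 \le \|\Delta_{{\bf S}_{\gamma,M}}({\bf Z})\|_\infty$ (valid since the matrix is symmetric). The $\infty$-norm row sum is at most
\begin{equation*}
\frac{\tilde b^{\,2}\,\|\{s^{(\gamma)}_k\}\|_{\mathcal{D}_\gamma}}{4\check b (M+1)^2}\cdot 2\sum_{k=1}^{M-1}\frac{k^2}{(1+k)^{1+\gamma}} \le \frac{\tilde b^{\,2}\,\|\{s^{(\gamma)}_k\}\|_{\mathcal{D}_\gamma}}{2\check b (M+1)^2}\sum_{k=1}^{M-1}k^{1-\gamma},
\end{equation*}
using $(1+k)^{1+\gamma}\ge k^{1+\gamma}$. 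Since $\gamma\in(1,2)$, the map $t\mapsto t^{1-\gamma}$ is decreasing, so $\sum_{k=1}^{M-1} k^{1-\gamma} \le \int_0^{M-1}t^{1-\gamma}\,\mathrm{d}t \le \frac{(M+1)^{2-\gamma}}{2-\gamma}$ (or one can use the integral comparison $\sum \le \int_0^M$ directly for a decreasing function after a shift; either way the leading constant is $1/(2-\gamma)$). Combining these factors produces exactly
\begin{equation*}
\|\Delta_{{\bf S}_{\gamma,M}}({\bf Z})\|_2 \le \frac{\|\{s^{(\gamma)}_k\}\|_{\mathcal{D}_\gamma}\tilde b^{\,2}}{2\check b(2-\gamma)(M+1)^\gamma} = \frac{\mu_\gamma(\|\{s^{(\gamma)}_k\}\|_{\mathcal{D}_\gamma},\tilde b,\check b)}{(M+1)^\gamma}.
\end{equation*}

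The main obstacle is essentially cosmetic: making sure the integral/sum estimate $\sum_{k=1}^{M-1}k^{1-\gamma}\le (M+1)^{2-\gamma}/(2-\gamma)$ holds with the clean constant $1/(2-\gamma)$ so that the final form of $\mu_\gamma$ matches the statement. Everything else follows from the closed-form entrywise identity for $\Delta_{\bf H}({\bf Z})$ and the $\mathcal{D}_\gamma$-decay of the generating sequence; no property of $s^{(\gamma)}_k$ beyond the decay bound is needed.
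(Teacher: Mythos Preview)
Your proposal is correct and follows essentially the same route as the paper: compute the entrywise identity $[\Delta_{{\bf S}_{\gamma,M}}({\bf Z})]_{ij}=s^{(\gamma)}_{|i-j|}(\sqrt{z_i}-\sqrt{z_j})^2$, bound $(\sqrt{z_i}-\sqrt{z_j})^2\le (z_i-z_j)^2/(4\check b)$, invoke the $\mathcal{D}_\gamma$-decay, and pass from $\|\cdot\|_2$ to the row-sum norm using symmetry. The only cosmetic differences are that the paper obtains the square-root bound via $\sqrt{z_i}-\sqrt{z_j}=\int_{z_j}^{z_i}\tfrac12\xi^{-1/2}\,d\xi$ rather than the conjugate identity, and handles the tail sum by $\sum_{k=1}^{M}k^{1-\gamma}\le\int_0^{M}x^{1-\gamma}\,dx=M^{2-\gamma}/(2-\gamma)$, which resolves your stated ``obstacle'' with exactly the constant $1/(2-\gamma)$.
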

  \begin{proof}
  See the Appendix \ref{retmatlmproof}.
  \end{proof}

    	\begin{lemma}\label{hpartcontrollm}
    	Let ${\bf Z}=\diago(z_1,z_2,...,z_M)\in\mathbb{R}^{M\times M}.$  Let $\tilde{b}>0$ be  such that 
    	\begin{equation*}
    		\nabla({\bf Z})\leq \frac{\tilde{b}}{M+1}.
    	\end{equation*} 
    	Assume there exits $\hat{b}\geq \check{b}>0$ such that
    	\begin{equation*}
    	\hat{b}\geq \max({\bf Z})\geq \min({\bf Z})\geq \check{b}.
    	\end{equation*}
    	Then, for any $\theta_1\in[0,1)$ and any $\theta_2\in(0,+\infty)$, it holds that
    	\begin{align*}
    	2\theta_1\check{b}{\bf S}_{\gamma,M}-\left(\frac{\mu_{\gamma}(||\{s_{k}^{({\gamma})}\}||_{\mathcal{D}_{{\gamma}}},\tilde{b},(1-\theta_1)\check{b})}{(1+M)^{\gamma}}\right){\bf I}_M&\prec {\bf Z}{\bf S}_{\gamma,M}+{\bf S}_{\gamma,M}{\bf Z}\\
    	&\prec 2(\hat{b}+\theta_2){\bf S}_{\gamma,M}+\left(\frac{\mu_{\gamma}(||\{s_{k}^{({\gamma})}\}||_{\mathcal{D}_{{\gamma}}},\tilde{b},\theta_2)}{(1+M)^{\gamma}}\right){\bf I}_{M}.
    	\end{align*}
    	where the function $\mu_{\gamma}(\cdot,\cdot,\cdot)$ is defined by Lemma \ref{resmatlemm}.
    \end{lemma}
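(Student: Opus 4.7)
The plan is to reduce both inequalities to the trivial observation that ${\bf W}^{1/2}{\bf S}_{\gamma,M}{\bf W}^{1/2}\succeq{\bf O}$ for any nonnegative diagonal matrix ${\bf W}$, which holds because ${\bf S}_{\gamma,M}\succ{\bf O}$ by \eqref{sgamhpd}. For any such ${\bf W}$, unfolding the definition of $\Delta_{{\bf S}_{\gamma,M}}(\cdot)$ yields the identity
\begin{equation*}
{\bf W}{\bf S}_{\gamma,M}+{\bf S}_{\gamma,M}{\bf W}=2{\bf W}^{1/2}{\bf S}_{\gamma,M}{\bf W}^{1/2}+\Delta_{{\bf S}_{\gamma,M}}({\bf W})\succeq -||\Delta_{{\bf S}_{\gamma,M}}({\bf W})||_2\,{\bf I}_M,
\end{equation*}
and Lemma \ref{resmatlemm} bounds $||\Delta_{{\bf S}_{\gamma,M}}({\bf W})||_2$ in terms of $\min({\bf W})$ and $\nabla({\bf W})$. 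The entire proof then reduces to selecting two complementary affine shifts of ${\bf Z}$ to which this one-sided estimate is applied.

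For the lower bound I will take ${\bf W}_1:={\bf Z}-\theta_1\check{b}\,{\bf I}_M$. Since $\theta_1\in[0,1)$ one has $\min({\bf W}_1)\geq(1-\theta_1)\check{b}>0$, and because a translation does not alter $\nabla$, $\nabla({\bf W}_1)=\nabla({\bf Z})\leq\tilde{b}/(M+1)$. Applying the identity above together with Lemma \ref{resmatlemm} (with the admissible lower bound $(1-\theta_1)\check{b}$) to ${\bf W}_1$ gives
\begin{equation*}
{\bf Z}{\bf S}_{\gamma,M}+{\bf S}_{\gamma,M}{\bf Z}-2\theta_1\check{b}\,{\bf S}_{\gamma,M}={\bf W}_1{\bf S}_{\gamma,M}+{\bf S}_{\gamma,M}{\bf W}_1\succeq -\frac{\mu_\gamma(||\{s_k^{(\gamma)}\}||_{\mathcal{D}_\gamma},\tilde{b},(1-\theta_1)\check{b})}{(M+1)^\gamma}{\bf I}_M,
\end{equation*}
which, after moving $2\theta_1\check{b}\,{\bf S}_{\gamma,M}$ to the right-hand side, is exactly the stated lower bound.

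For the upper bound, the naive attempt of using ${\bf Z}^{1/2}{\bf S}_{\gamma,M}{\bf Z}^{1/2}\preceq \hat{b}\,{\bf S}_{\gamma,M}$ fails in general, as a small $2\times 2$ check in the Loewner order confirms, so one cannot directly dominate ${\bf Z}{\bf S}_{\gamma,M}+{\bf S}_{\gamma,M}{\bf Z}$ from above by massaging ${\bf Z}$ itself. The trick is to shift in the complementary direction: put ${\bf W}_2:=(\hat{b}+\theta_2)\,{\bf I}_M-{\bf Z}$, so that $\min({\bf W}_2)\geq\theta_2>0$ and $\nabla({\bf W}_2)=\nabla({\bf Z})\leq\tilde{b}/(M+1)$. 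Applying the same identity and Lemma \ref{resmatlemm} to ${\bf W}_2$ now gives
\begin{equation*}
2(\hat{b}+\theta_2)\,{\bf S}_{\gamma,M}-({\bf Z}{\bf S}_{\gamma,M}+{\bf S}_{\gamma,M}{\bf Z})={\bf W}_2{\bf S}_{\gamma,M}+{\bf S}_{\gamma,M}{\bf W}_2\succeq -\frac{\mu_\gamma(||\{s_k^{(\gamma)}\}||_{\mathcal{D}_\gamma},\tilde{b},\theta_2)}{(M+1)^\gamma}{\bf I}_M,
\end{equation*}
and rearranging recovers the claimed upper bound verbatim.

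The main obstacle is precisely spotting this complementary-shift idea for the upper bound: the one-sided estimate furnished by the identity is always of the form $\succeq -(\cdots){\bf I}_M$, so direct application to ${\bf Z}$ only produces lower bounds; the insight is to instead apply it to $(\hat{b}+\theta_2)\,{\bf I}_M-{\bf Z}$, which flips the $\succeq$ inequality into an $\preceq$ inequality for ${\bf Z}{\bf S}_{\gamma,M}+{\bf S}_{\gamma,M}{\bf Z}$. Once this trick is in place, both halves of the statement are single-line consequences of Lemma \ref{resmatlemm} and the positive-definiteness \eqref{sgamhpd} of ${\bf S}_{\gamma,M}$.
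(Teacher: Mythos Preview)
Your proposal is correct and follows essentially the same approach as the paper: both use the two affine shifts ${\bf W}_1={\bf Z}-\theta_1\check{b}\,{\bf I}_M$ and ${\bf W}_2=(\hat{b}+\theta_2)\,{\bf I}_M-{\bf Z}$, decompose ${\bf W}_i{\bf S}_{\gamma,M}+{\bf S}_{\gamma,M}{\bf W}_i$ via $\Delta_{{\bf S}_{\gamma,M}}(\cdot)$, drop the nonnegative term $2{\bf W}_i^{1/2}{\bf S}_{\gamma,M}{\bf W}_i^{1/2}$, and invoke Lemma~\ref{resmatlemm}. The only cosmetic gap is that the statement asserts \emph{strict} Loewner inequalities while you write $\succeq$; this is immediate once you note that ${\bf W}_1,{\bf W}_2$ are in fact positive definite (their minimum entries are $(1-\theta_1)\check{b}>0$ and $\theta_2>0$), so ${\bf W}_i^{1/2}{\bf S}_{\gamma,M}{\bf W}_i^{1/2}\succ{\bf O}$.
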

    \begin{proof}
    See the Appendix \ref{hpartcontrllmproof}.
    \end{proof}

    In \eqref{vxydef}, a $x$-dominant ordering of the spatial grid points is defined. Similarly, one can define a $y$-dominant ordering of the spatial grid points as follows
    \begin{equation*}
    	{\bf V}_{y,x}=(G_{1,1},G_{1,2},...,G_{1,M_y},G_{2,1},G_{2,2},...,G_{2,M_y},......,G_{M_x,1},G_{M_x,2},...,G_{M_x,M_y})^{\rm T}.
    \end{equation*}
        There exists a permutation matrix ${\bf P}_{x\leftrightarrow y}\in\mathbb{R}^{J\times J}$ such that
    \begin{equation*}
    	{\bf V}_{y,x}={\bf P}_{x\leftrightarrow y}{\bf V}_{x,y}.
    \end{equation*}
    
    Define the set of functions that are Lipschitz continuous with respect to the  first and the second variable , respectively as follows
    \begin{align*}
    &\mathcal{L}_1(\Omega):=\left\{w:\Omega\rightarrow\mathbb{R}\ \bigg| |w|_{\mathcal{L}_1(\mathcal{S})}:=\sup\limits_{y\in(l_2,r_2)} \ \sup\limits_{x,\tilde{x}\in (l_1,r_1),~x\neq \tilde{x}}\frac{|w(x,y)-w(\tilde{x},y)|}{|x-\tilde{x}|}<+\infty\right\}\\
    &\mathcal{L}_2(\Omega):=\left\{w:\Omega\rightarrow\mathbb{R}\ \bigg| |w|_{\mathcal{L}_2(\mathcal{S})}:=\sup\limits_{x\in(l_1,r_1)} \ \sup\limits_{y,\tilde{y}\in (l_2,r_2),~y\neq \tilde{y}}\frac{|w(x,y)-w(x,\tilde{y})|}{|y-\tilde{y}|}<+\infty\right\}.
    \end{align*}

   	\begin{lemma}\label{2dhpartcontrollm}
   Assume $d\in\mathcal{L}_{1}(\Omega)$, $e\in\mathcal{L}_{2}(\Omega)$.  
   	Then, for any $\theta_1,\theta_2\in[0,1)$ and any $\theta_3,\theta_4\in(0,+\infty)$, it holds that
   	\begin{align*}
   		&2\theta_1\check{d}({\bf I}_{M_y}\otimes {\bf S}_{\alpha,M_x})-s_{1,1}(\theta_1)\Delta x^{\alpha}{\bf I}_{J}\prec {\bf A}_x+{\bf A}_x^{\rm T}\prec 2(\hat{d}+\theta_3)({\bf I}_{M_y}\otimes {\bf S}_{\alpha,M_x})+s_{1,2}(\theta_3)\Delta x^{\alpha}{\bf I}_{J},\\
   		&2\theta_2\check{e}( {\bf S}_{\beta,M_y}\otimes{\bf I}_{M_x})-s_{2,1}(\theta_2)\Delta y^{\beta}{\bf I}_{J}\prec {\bf A}_y+{\bf A}_y^{\rm T}\prec 2(\hat{e}+\theta_4)( {\bf S}_{\beta,M_y}\otimes{\bf I}_{M_x})+s_{2,2}(\theta_4)\Delta y^{\beta}{\bf I}_{J},
   	\end{align*}
	where $s_{1,1},s_{1,2},s_{2,1},s_{2,2}$ are positive constants independent of discretization step-sizes given as follows
\begin{align*}
	&s_{1,1}(\theta_1):=\frac{||\{s_k^{(\alpha)}\}||_{\mathcal{D}_{\alpha}}|d|^2_{\mathcal{L}_1(\Omega)}}{2(1-\theta_1)\check{d}(2-\alpha)(r_1-l_1)^{\alpha-2}},\quad 	s_{1,2}(\theta_3):=\frac{||\{s_k^{(\alpha)}\}||_{\mathcal{D}_{\alpha}}|d|^2_{\mathcal{L}_1(\Omega)}}{2\theta_3(2-\alpha)(r_1-l_1)^{\alpha-2}},\\
	&s_{2,1}(\theta_2):=\frac{||\{s_k^{(\beta)}\}||_{\mathcal{D}_{\beta}}|e|^2_{\mathcal{L}_2(\Omega)}}{2(1-\theta_2)\check{e}(2-\beta)(r_2-l_2)^{\beta-2}},\quad 	s_{2,2}(\theta_4):=\frac{||\{s_k^{(\beta)}\}||_{\mathcal{D}_{\beta}}|e|^2_{\mathcal{L}_2(\Omega)}}{2\theta_4(2-\beta)(r_2-l_2)^{\beta-2}}
\end{align*}
   \end{lemma}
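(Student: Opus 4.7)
\bigskip

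\noindent\textbf{Proof plan.} The two inequalities have essentially the same structure, so my plan is to prove the first one in detail and then transfer the argument to the second one via the permutation matrix ${\bf P}_{x\leftrightarrow y}$. The key observation is that ${\bf I}_{M_y}\otimes {\bf S}_{\alpha,M_x}$ is block diagonal with $M_y$ identical blocks ${\bf S}_{\alpha,M_x}$, and the diagonal matrix ${\bf D}$ splits conformally into $M_y$ diagonal blocks. Writing ${\bf D}=\diago({\bf D}_1,{\bf D}_2,\dots,{\bf D}_{M_y})$ where ${\bf D}_j:=\diago(d(x_1,y_j),d(x_2,y_j),\dots,d(x_{M_x},y_j))$, one has ${\bf A}_x+{\bf A}_x^{\rm T}=\diago({\bf D}_j{\bf S}_{\alpha,M_x}+{\bf S}_{\alpha,M_x}{\bf D}_j)_{j=1}^{M_y}$. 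So it suffices to control each $M_x\times M_x$ block uniformly in $j$ and then assemble.

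Next I would verify that for each fixed $j$ the diagonal matrix ${\bf D}_j$ satisfies the hypotheses of Lemma \ref{hpartcontrollm}. Because $d\in\mathcal{L}_1(\Omega)$, consecutive entries of ${\bf D}_j$ differ by at most $\Delta x\,|d|_{\mathcal{L}_1(\Omega)}$, which gives $\nabla({\bf D}_j)\leq \Delta x\,|d|_{\mathcal{L}_1(\Omega)} = \frac{(r_1-l_1)|d|_{\mathcal{L}_1(\Omega)}}{M_x+1}$, so the relevant $\tilde{b}$ is $(r_1-l_1)|d|_{\mathcal{L}_1(\Omega)}$; the bounds $\check{d}\leq {\bf D}_j(i,i)\leq \hat{d}$ come directly from the standing assumption on $d$. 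Applying Lemma \ref{hpartcontrollm} with these constants and the parameters $\theta_1$ (lower) and $\theta_3$ (upper) gives, for each $j$,
\begin{equation*}
2\theta_1\check{d}\,{\bf S}_{\alpha,M_x}-\frac{\mu_\alpha(\|\{s_k^{(\alpha)}\}\|_{\mathcal{D}_\alpha},(r_1-l_1)|d|_{\mathcal{L}_1(\Omega)},(1-\theta_1)\check{d})}{(M_x+1)^\alpha}{\bf I}_{M_x}\prec {\bf D}_j{\bf S}_{\alpha,M_x}+{\bf S}_{\alpha,M_x}{\bf D}_j
\end{equation*}
and a symmetric upper bound. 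Taking the direct sum over $j$ promotes these block-wise inequalities to inequalities for ${\bf A}_x+{\bf A}_x^{\rm T}$ on $\mathbb{R}^J$, with ${\bf S}_{\alpha,M_x}$ replaced by ${\bf I}_{M_y}\otimes {\bf S}_{\alpha,M_x}$ and ${\bf I}_{M_x}$ replaced by ${\bf I}_J$.

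To finish the first inequality I would convert the prefactor $(M_x+1)^{-\alpha}$ into $\Delta x^\alpha$ using $\Delta x=(r_1-l_1)/(M_x+1)$, which produces a factor $(r_1-l_1)^{-\alpha}$ that combines with the $(r_1-l_1)^2$ coming from $\tilde b^2$ inside $\mu_\alpha$ to give precisely $(r_1-l_1)^{2-\alpha}$; this yields exactly the constants $s_{1,1}(\theta_1)$ and $s_{1,2}(\theta_3)$ defined in the statement. For the second inequality the idea is identical after conjugating by ${\bf P}_{x\leftrightarrow y}$: one checks that ${\bf P}_{x\leftrightarrow y}{\bf A}_y{\bf P}_{x\leftrightarrow y}^{\rm T}=\diago(e({\bf V}_{y,x}))({\bf I}_{M_x}\otimes {\bf S}_{\beta,M_y})$, whose diagonal now splits into $M_x$ blocks along which $e$ varies only in $y$; the hypothesis $e\in\mathcal{L}_2(\Omega)$ yields $\nabla\leq (r_2-l_2)|e|_{\mathcal{L}_2(\Omega)}/(M_y+1)$, and repeating the previous steps with $\alpha,M_x,r_1-l_1,d$ replaced by $\beta,M_y,r_2-l_2,e$ and then conjugating back by the orthogonal (hence order-preserving) ${\bf P}_{x\leftrightarrow y}$ delivers the second pair of bounds.

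The main obstacle I anticipate is bookkeeping rather than mathematical difficulty: correctly identifying $\tilde b$ as a \emph{spatial} Lipschitz constant times $1/(M+1)$ (not just $|d|_{\mathcal{L}_1}$), and then tracking how the factor $(M+1)^{-\gamma}$ interacts with the $(r_i-l_i)^2$ in the numerator of $\mu_\gamma$ so that the final $(r_i-l_i)^{\gamma-2}$ appears in the stated constants. Once this accounting is done carefully and the direct-sum/permutation assembly is made explicit, no further analytic work is needed beyond invoking Lemma \ref{hpartcontrollm}.
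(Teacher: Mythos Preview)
Your proposal is correct and follows essentially the same approach as the paper: block-diagonal decomposition of ${\bf A}_x+{\bf A}_x^{\rm T}$ along the $y$-index, verification that each ${\bf D}_j$ satisfies the hypotheses of Lemma~\ref{hpartcontrollm} with $\tilde b=(r_1-l_1)|d|_{\mathcal{L}_1(\Omega)}$, assembly of the block inequalities, conversion of $(M_x+1)^{-\alpha}$ into $\Delta x^{\alpha}(r_1-l_1)^{-\alpha}$, and finally the permutation trick via ${\bf P}_{x\leftrightarrow y}$ for the $y$-direction bound. The bookkeeping you flag as the main obstacle is exactly what the paper handles, and you have it right.
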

   \begin{proof}
   	See the Appendix \ref{d2hpartcontrollmproof}.
   \end{proof}

   \begin{lemma}\label{prehpartminmaxlm}
   	 Assume $d\in\mathcal{L}_{1}(\Omega)$, $e\in\mathcal{L}_{2}(\Omega)$.  Take $\Delta t\leq c_0$ with the constant $c_0$ independent of step-sizes defined as follows
   	\begin{equation*}
   	c_0=\min\left\{\frac{1-c_1}{b_1},\frac{c_2-1}{b_2}\right\}>0, 
   	\end{equation*}
   where
   	\begin{align*}
   	&c_1=\min\left\{\sqrt{\frac{\check{d}}{2\hat{d}}},\sqrt{\frac{\check{e}}{2\hat{e}}}\right\}\in(0,1),\quad c_2=\max\left\{\sqrt{\frac{2\hat{d}}{\check{d}}},\sqrt{\frac{2\hat{e}}{\check{e}}}\right\}>\sqrt{2}\\
   	 &b_1=\frac{||\{s_k^{(\alpha)}\}_{k\geq 0}||_{\mathcal{D}_{\alpha}}|d|^2_{\mathcal{L}_1(\Omega)}(r_1-l_1)^{2}}{4(1-\sqrt{2}/2)\check{d}(2-\alpha)}+\frac{||\{s_k^{(\beta)}\}_{k\geq 0}||_{\mathcal{D}_{\beta}}|e|^2_{\mathcal{L}_2(\Omega)}(r_2-l_2)^{2}}{4(1-\sqrt{2}/2)\check{e}(2-\beta)}>0,\\
   	 &b_2=\frac{||\{s_k^{(\alpha)}\}_{k\geq 0}||_{\mathcal{D}_{\alpha}}|d|^2_{\mathcal{L}_1(\Omega)}(r_1-l_1)^{2}}{4(\sqrt{2}-1)\hat{d}(2-\alpha)}+\frac{||\{s_k^{(\beta)}\}_{k\geq 0}||_{\mathcal{D}_{\beta}}|e|^2_{\mathcal{L}_2(\Omega)}(r_2-l_2)^{2}}{4(\sqrt{2}-1)\hat{e}(2-\beta)}>0.
   	\end{align*}
   	Then, for any non-zero vector ${\bf z}\in\mathbb{R}^{J\times J}$, it holds that
   	\begin{equation*}
   	0<c_1\leq
    \frac{{\bf z}^{\rm T}\mathcal{H}({\bf A}){\bf z}}{{\bf z}^{\rm T}\tilde{\bf P}{\bf z}} \leq c_2.
   	\end{equation*}
   \end{lemma}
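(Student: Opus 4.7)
The plan is to reduce the generalized Rayleigh-quotient bound to the matrix inequalities
\begin{equation*}
c_1 \tilde{\bf P} \preceq \mathcal{H}({\bf A}) \preceq c_2 \tilde{\bf P},
\end{equation*}
which make sense because $\tilde{\bf P}$ is symmetric positive definite thanks to \eqref{sgamhpd}. To establish these two inequalities, I would first decompose
\begin{equation*}
\mathcal{H}({\bf A}) = {\bf I}_J + \tfrac{\eta_x}{2}({\bf A}_x+{\bf A}_x^{\rm T}) + \tfrac{\eta_y}{2}({\bf A}_y+{\bf A}_y^{\rm T})
\end{equation*}
and then apply Lemma \ref{2dhpartcontrollm} to each symmetrized block. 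Using the relations $\eta_x \Delta x^\alpha = \eta_y \Delta y^\beta = \Delta t$, the residual $\Delta x^\alpha {\bf I}_J$ and $\Delta y^\beta {\bf I}_J$ pieces collapse into a single scalar $\Delta t$-perturbation of the identity, while the ${\bf S}$-block contributions pick up coefficients $\theta_1\check{d}$, $\theta_2\check{e}$ (from below) and $\hat{d}+\theta_3$, $\hat{e}+\theta_4$ (from above).

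For the upper bound I would choose $\theta_3 = (\sqrt{2}-1)\hat{d}$ and $\theta_4 = (\sqrt{2}-1)\hat{e}$, so that $\hat{d}+\theta_3 = \sqrt{2}\,\hat{d} = \sqrt{2\hat{d}/\check{d}}\,\bar{d} \leq c_2 \bar{d}$ and similarly $\hat{e}+\theta_4 \leq c_2\bar{e}$. With these choices the ${\bf S}_{\alpha,M_x}$- and ${\bf S}_{\beta,M_y}$-block terms of the Lemma \ref{2dhpartcontrollm} upper bound are dominated block-wise by the corresponding blocks of $c_2\tilde{\bf P}$, and the target $\mathcal{H}({\bf A})\preceq c_2\tilde{\bf P}$ reduces to the scalar condition
\begin{equation*}
1 + \tfrac{\Delta t}{2}\bigl(s_{1,2}(\theta_3) + s_{2,2}(\theta_4)\bigr) \leq c_2,
\end{equation*}
which, after plugging in the explicit expressions for $s_{1,2},s_{2,2}$, becomes exactly $\Delta t \leq (c_2-1)/b_2$. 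Symmetrically, for the lower bound I would take $\theta_1 = \theta_2 = \sqrt{2}/2$, giving $\theta_1\check{d} = \sqrt{\check{d}/(2\hat{d})}\,\bar{d} \geq c_1\bar{d}$ and likewise $\theta_2\check{e} \geq c_1\bar{e}$, so that the ${\bf S}$-block terms of the lower bound dominate $c_1$ times the corresponding blocks of $\tilde{\bf P}$; the residual scalar requirement
\begin{equation*}
1 - \tfrac{\Delta t}{2}\bigl(s_{1,1}(\theta_1) + s_{2,1}(\theta_2)\bigr) \geq c_1
\end{equation*}
becomes $\Delta t \leq (1-c_1)/b_1$. Taking $c_0$ as the minimum of these two thresholds yields both desired matrix inequalities, and hence the claimed Rayleigh-quotient bounds.

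The main obstacle will be calibrating the $\theta$-parameters so that the ${\bf S}$-block coefficients line up cleanly with the geometric means $\bar{d}=\sqrt{\check{d}\hat{d}}$ and $\bar{e}=\sqrt{\check{e}\hat{e}}$ (which is precisely why the preconditioner was built using these geometric means in the first place) and then bookkeeping the resulting identity-perturbation constant to match $b_1,b_2$ as written. The splits $\theta_1=\theta_2=\sqrt{2}/2$ and $\theta_3=(\sqrt{2}-1)\hat{d}$, $\theta_4=(\sqrt{2}-1)\hat{e}$ are in fact the only choices that yield the sharp constants $c_1=\sqrt{\check{d}/(2\hat{d})}$ and $c_2=\sqrt{2\hat{d}/\check{d}}$ (respectively their $e$-counterparts), so once they are fixed the remainder is scalar algebra combined with the positive semidefiniteness already furnished by \eqref{sgamhpd} and Lemma \ref{tausgammamhpdlm}.
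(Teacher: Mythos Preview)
Your proposal is correct and follows essentially the same route as the paper: you apply Lemma~\ref{2dhpartcontrollm} to $\mathcal{H}({\bf A})$, choose $\theta_1=\theta_2=\sqrt{2}/2$ for the lower bound and $\theta_3=(\sqrt{2}-1)\hat d$, $\theta_4=(\sqrt{2}-1)\hat e$ for the upper bound, and then use $\eta_x\Delta x^{\alpha}=\eta_y\Delta y^{\beta}=\Delta t$ to reduce the identity-perturbation to the scalar conditions $\Delta t\le(1-c_1)/b_1$ and $\Delta t\le(c_2-1)/b_2$. This is exactly the argument given in the paper.
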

   \begin{proof}
   	Take $\theta_1=\theta_2=\frac{1}{\sqrt{2}}$. By the assumption, $\Delta t\leq c_0\leq \frac{1-c_1}{b_1}$. That means
   	\begin{equation*}
   	c_1\leq 1-\Delta t b_1
   	\end{equation*}
   	 By Lemma \ref{2dhpartcontrollm}, we have
   	\begin{align}
   \mathcal{H}({\bf A})	=&{\bf I}_J+\eta_x\frac{{\bf A}_x+{\bf A}_x^{\rm T}}{2}+\eta_y\frac{{\bf A}_y+{\bf A}_y^{\rm T}}{2}\notag\\
   \succ &[1-s_{1,1}(\theta_1)\eta_x\Delta x^{\alpha}/2-s_{2,1}(\theta_2)\eta_y\Delta y^{\beta}/2]{\bf I}_J\notag\\
   &+\theta_1\check{d}\eta_x({\bf I}_{M_y}\otimes {\bf S}_{\alpha,M_x})+\theta_2\check{e}\eta_y({\bf S}_{\beta,M_y}\otimes {\bf I}_{M_x})\notag\\
   =&(1-\Delta tb_1){\bf I}_J+\sqrt{\frac{\check{d}}{2\hat{d}}}\bar{d}\eta_x({\bf I}_{M_y}\otimes {\bf S}_{\alpha,M_x})+\sqrt{\frac{\check{e}}{2\hat{e}}}\bar{e}\eta_y({\bf S}_{\beta,M_y}\otimes {\bf I}_{M_x})\notag\\
   \succeq &c_1{\bf I}_J+c_1\bar{d}\eta_x({\bf I}_{M_y}\otimes {\bf S}_{\alpha,M_x})+c_1\bar{e}\eta_y({\bf S}_{\beta,M_y}\otimes {\bf I}_{M_x})=c_1\tilde{\bf P}.\label{halbcontrlbytildp}
   	\end{align}
   	
   	Take  $\theta_3=(\sqrt{2}-1)\hat{d}$, $\theta_4=(\sqrt{e}-1)\hat{e}$. By the assumption, $\Delta t\leq c_0\leq \frac{c_2-1}{b_2}$. That means
   	\begin{equation*}
   	1+\Delta t b_2\leq c_2.
   	\end{equation*}
   	By Lemma \ref{2dhpartcontrollm}, we have
   	\begin{align*}
      \mathcal{H}({\bf A})	=&{\bf I}_J+\eta_x\frac{{\bf A}_x+{\bf A}_x^{\rm T}}{2}+\eta_y\frac{{\bf A}_y+{\bf A}_y^{\rm T}}{2}\notag\\
      \prec &[1+\eta_x s_{1,2}(\theta_3)\Delta x^{\alpha}/2+\eta_y s_{2,2}(\theta_4)\Delta y^{\beta}/2]{\bf I}_J\notag\\
      &+\eta_x(\hat{d}+\theta_3)({\bf I}_{M_y}\otimes {\bf S}_{\alpha,M_x})+\eta_y(\hat{e}+\theta_4)({\bf S}_{\beta,M_y}\otimes {\bf I}_{M_x} )\notag\\
      =&(1+\Delta t b_2){\bf I}_J+\sqrt{\frac{2\hat{d}}{\check{d}}}\bar{d}\eta_x({\bf I}_{M_y}\otimes {\bf S}_{\alpha,M_x})+\sqrt{\frac{2\hat{e}}{\check{e}}}\bar{e}\eta_y({\bf S}_{\beta,M_y}\otimes {\bf I}_{M_x})\notag\\
      \prec& c_2{\bf I}_J+c_2\bar{d}\eta_x({\bf I}_{M_y}\otimes {\bf S}_{\alpha,M_x})+c_2\bar{e}\eta_y({\bf S}_{\beta,M_y}\otimes {\bf I}_{M_x})=c_2\tilde{\bf P},
   	\end{align*}
   	which together with \eqref{halbcontrlbytildp} completes the proof.
   \end{proof}
   
   	\begin{lemma}\label{spartcontrollm}
   	Let ${\bf Z}=\diago(z_1,z_2,...,z_M)\in\mathbb{R}^{M\times M}.$  Let $\tilde{b}>0$ be  such that 
   	\begin{equation*}
   		\nabla({\bf Z})\leq \frac{\tilde{b}}{M+1}.
   	\end{equation*} 
   	Assume there exits $\hat{b}\geq \check{b}>0$ such that
   	\begin{equation*}
   		\hat{b}\geq \max({\bf Z})\geq \min({\bf Z})\geq \check{b}.
   	\end{equation*}
   Then, for any non-zero vector ${\bf y}\in\mathbb{C}^{M\times 1}$, it holds that
   	\begin{align*}
|{\bf y}^{*}({\bf Z}{\bf S}_{\gamma,M}-{\bf S}_{\gamma,M}{\bf Z}){\bf y}|&\leq \nu_1(\hat{b},\check{b}){\bf y}^{*}{\bf S}_{\gamma,M}{\bf y}+\frac{\nu_2(\gamma,\hat{b},\check{b},\tilde{b})}{(1+M)^{\gamma}}{\bf y}^{*}{\bf y},
   	\end{align*}
   	where $\nu_1(\hat{b},\check{b}):=\hat{b}^2+\check{b}^2+1$,
   	\begin{equation*}
   	\nu_2(\gamma,\hat{b},\check{b},\tilde{b}):=\frac{2||\{s_k^{(\gamma)}\}||_{\mathcal{D}_{\gamma}}\hat{b}^2\tilde{b}^2}{\check{b}^2(2-\gamma)}
   	\end{equation*}
   \end{lemma}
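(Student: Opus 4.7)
The plan is to exploit the skew-Hermitian structure of ${\bf Z}{\bf S}_{\gamma,M}-{\bf S}_{\gamma,M}{\bf Z}$ together with a Cauchy--Schwarz reduction that turns the commutator bound into one on the symmetric form ${\bf y}^*{\bf Z}{\bf S}_{\gamma,M}{\bf Z}{\bf y}$. This latter quantity is then controlled via Lemma \ref{hpartcontrollm} (itself based on Lemma \ref{resmatlemm}) applied to the \emph{squared} diagonal matrix ${\bf Z}^2$.

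The first step is a skew-symmetric reduction: since ${\bf S}_{\gamma,M}$ and ${\bf Z}$ are real symmetric, ${\bf y}^*{\bf S}_{\gamma,M}{\bf Z}{\bf y}=\overline{{\bf y}^*{\bf Z}{\bf S}_{\gamma,M}{\bf y}}$, whence $|{\bf y}^*({\bf Z}{\bf S}_{\gamma,M}-{\bf S}_{\gamma,M}{\bf Z}){\bf y}|=2|{\rm Im}({\bf y}^*{\bf Z}{\bf S}_{\gamma,M}{\bf y})|\le 2|{\bf y}^*{\bf Z}{\bf S}_{\gamma,M}{\bf y}|$. The second step uses that ${\bf S}_{\gamma,M}\succ {\bf O}$ (from \eqref{sgamhpd}), so $\langle{\bf u},{\bf v}\rangle:={\bf u}^*{\bf S}_{\gamma,M}{\bf v}$ is an inner product. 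Cauchy--Schwarz then gives $|{\bf y}^*{\bf Z}{\bf S}_{\gamma,M}{\bf y}|^2=|\langle{\bf Z}{\bf y},{\bf y}\rangle|^2\le ({\bf y}^*{\bf S}_{\gamma,M}{\bf y})({\bf y}^*{\bf Z}{\bf S}_{\gamma,M}{\bf Z}{\bf y})$, and then $2\sqrt{ab}\le a+b$ yields $2|{\bf y}^*{\bf Z}{\bf S}_{\gamma,M}{\bf y}|\le {\bf y}^*{\bf S}_{\gamma,M}{\bf y}+{\bf y}^*{\bf Z}{\bf S}_{\gamma,M}{\bf Z}{\bf y}$.

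The third and key step bounds ${\bf y}^*{\bf Z}{\bf S}_{\gamma,M}{\bf Z}{\bf y}$. From the definition of $\Delta_{{\bf S}_{\gamma,M}}({\bf Z}^2)$ (noting $({\bf Z}^2)^{1/2}={\bf Z}$), one has
\[
{\bf Z}{\bf S}_{\gamma,M}{\bf Z}=\tfrac{1}{2}\bigl({\bf Z}^2{\bf S}_{\gamma,M}+{\bf S}_{\gamma,M}{\bf Z}^2\bigr)-\tfrac{1}{2}\Delta_{{\bf S}_{\gamma,M}}({\bf Z}^2).
\]
The diagonal matrix ${\bf Z}^2$ satisfies $\min({\bf Z}^2)\ge \check b^2$, $\max({\bf Z}^2)\le \hat b^2$, and $\nabla({\bf Z}^2)\le 2\hat b\tilde b/(M+1)$, the last because $|z_i^2-z_j^2|=(z_i+z_j)|z_i-z_j|\le 2\hat b|z_i-z_j|$. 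Apply Lemma \ref{hpartcontrollm} to ${\bf Z}^2$ with the \emph{specifically tuned} parameter $\theta_2=\check b^2$: this gives ${\bf Z}^2{\bf S}_{\gamma,M}+{\bf S}_{\gamma,M}{\bf Z}^2\prec 2(\hat b^2+\check b^2){\bf S}_{\gamma,M}+[\mu_\gamma(\|\{s_k^{(\gamma)}\}\|_{\mathcal{D}_\gamma},2\hat b\tilde b,\check b^2)/(M+1)^\gamma]{\bf I}$, and crucially the $(M+1)^{-\gamma}$-coefficient $\mu_\gamma(\|\{s_k^{(\gamma)}\}\|_{\mathcal{D}_\gamma},2\hat b\tilde b,\check b^2)$ equals $\nu_2(\gamma,\hat b,\check b,\tilde b)$ from the statement. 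Lemma \ref{resmatlemm} applied to ${\bf Z}^2$ yields the same bound on $\|\Delta_{{\bf S}_{\gamma,M}}({\bf Z}^2)\|_2$. Combining these gives ${\bf y}^*{\bf Z}{\bf S}_{\gamma,M}{\bf Z}{\bf y}\le (\hat b^2+\check b^2){\bf y}^*{\bf S}_{\gamma,M}{\bf y}+[\nu_2/(M+1)^\gamma]{\bf y}^*{\bf y}$, which plugged into the chain of Steps 1 and 2 produces the claim with $\nu_1=1+\hat b^2+\check b^2$. The main obstacle is recognizing the correct tuning $\theta_2=\check b^2$ in Lemma \ref{hpartcontrollm}; once this choice is made, the ${\bf S}_{\gamma,M}$- and $(M+1)^{-\gamma}$-coefficients align precisely with $\nu_1$ and $\nu_2$.
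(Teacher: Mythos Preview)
Your proposal is correct and follows essentially the same approach as the paper's proof: the skew-symmetric reduction, Cauchy--Schwarz in the ${\bf S}_{\gamma,M}$-inner product followed by AM--GM, the identity ${\bf Z}{\bf S}_{\gamma,M}{\bf Z}=\tfrac12({\bf Z}^2{\bf S}_{\gamma,M}+{\bf S}_{\gamma,M}{\bf Z}^2)-\tfrac12\Delta_{{\bf S}_{\gamma,M}}({\bf Z}^2)$, and the application of Lemmas~\ref{resmatlemm} and~\ref{hpartcontrollm} to ${\bf Z}^2$ with the choice $\theta_2=\check b^2$ all match the paper exactly.
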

   \begin{proof}
   From Lemma \ref{taumatprecspectralm}, we already know that ${\bf S}_{\gamma,M}\succ{\bf O}$. Then,
   \begin{align}
   	|{\bf y}^{*}({\bf Z}{\bf S}_{\gamma,M}-{\bf S}_{\gamma,M}{\bf Z}){\bf y}|&=|\langle{\bf Z}{\bf y},{\bf S}_{\gamma,M}{\bf y}\rangle-\langle{\bf S}_{\gamma,M}{\bf y},{\bf Z}{\bf y}\rangle|\notag\\
   	&\leq 2|\langle{\bf Z}{\bf y},{\bf S}_{\gamma,M}{\bf y}\rangle|\notag\\
   	&=2|\langle{\bf S}_{\gamma,M}^{\frac{1}{2}}{\bf Z}{\bf y},{\bf S}_{\gamma,M}^{\frac{1}{2}}{\bf y}\rangle|\notag\\
   	&\leq 2\langle{\bf S}_{\gamma,M}^{\frac{1}{2}}{\bf Z}{\bf y},{\bf S}_{\gamma,M}^{\frac{1}{2}}{\bf Z}{\bf y}\rangle^{\frac{1}{2}}\langle{\bf S}_{\gamma,M}^{\frac{1}{2}}{\bf y},{\bf S}_{\gamma,M}^{\frac{1}{2}}{\bf y}\rangle^{\frac{1}{2}}\notag\\
   	&\leq \langle{\bf S}_{\gamma,M}^{\frac{1}{2}}{\bf Z}{\bf y},{\bf S}_{\gamma,M}^{\frac{1}{2}}{\bf Z}{\bf y}\rangle+\langle{\bf S}_{\gamma,M}^{\frac{1}{2}}{\bf y},{\bf S}_{\gamma,M}^{\frac{1}{2}}{\bf y}\rangle\notag\\
   	&={\bf y}^{*}{\bf Z}{\bf S}_{\gamma,M}{\bf Z}{\bf y}+{\bf y}^{*}{\bf S}_{\gamma,M}{\bf y}.\label{spartcontroleq1}
   \end{align}
   Moreover,
   \begin{align*}
   	{\bf Z}{\bf S}_{\gamma,M}{\bf Z}&=\frac{1}{2}[{\bf Z}^2{\bf S}_{\gamma,M}+{\bf S}_{\gamma,M}{\bf Z}^2-\Delta_{{\bf S}_{\gamma,M}}({\bf Z}^2)]\notag\\
   	&\preceq \frac{1}{2}({\bf Z}^2{\bf S}_{\gamma,M}+{\bf S}_{\gamma,M}{\bf Z}^2)+\frac{1}{2}||\Delta_{{\bf S}_{\gamma,M}}({\bf Z}^2)||_2{\bf I}_M.
   \end{align*}
   By the assumptions, we have
   \begin{align*}
   	\hat{b}^2&\geq \max({\bf Z}^2)\geq \min({\bf Z}^2)\geq \check{b}^2,\\ 
  \nabla({\bf Z}^2) &\leq \max\limits_{i\neq j}\frac{|z_i^2-z_j^2|}{|i-j|}\\
   &=\max\limits_{i\neq j}\frac{(z_i+z_j)|z_i-z_j|}{|i-j|}\leq 2\hat{b}\max\limits_{i\neq j}\frac{|z_i-z_j|}{|i-j|}= 2\hat{b}\nabla({\bf Z})\leq \frac{2\hat{b}\tilde{b}}{M+1}.
   \end{align*}
   Then, Lemmas \ref{resmatlemm} implies that
   \begin{align*}
   	||\Delta_{{\bf S}_{\gamma,M}}({\bf Z}^2)||_2\leq \frac{\mu_{\gamma}(||\{s_k^{(\gamma)}\}||_{\mathcal{D}_{\gamma}},2\hat{b}\tilde{b},\check{b}^2)}{(1+M)^{\gamma}}
   \end{align*}
   Take $\theta_2=\check{b}^2$. Then, Lemma \ref{hpartcontrollm} implies that
   \begin{align*}
{\bf Z}^2{\bf S}_{\gamma,M}+{\bf S}_{\gamma,M}{\bf Z}^2&\prec 2(\hat{b}^2+\theta_2){\bf S}_{\gamma,M}+\left(\frac{\mu_{\gamma}(||\{s_k^{(\gamma)}\}||_{\mathcal{D}_{\gamma}},2\hat{b}\tilde{b},\theta_2)}{(1+M)^{\gamma}}\right){\bf I}_{M}\\
&=2(\hat{b}^2+\check{b}^2){\bf S}_{\gamma,M}+\left(\frac{\mu_{\gamma}(||\{s_k^{(\gamma)}\}||_{\mathcal{D}_{\gamma}},2\hat{b}\tilde{b},\check{b}^2)}{(1+M)^{\gamma}}\right){\bf I}_{M}.
   \end{align*}
   That means
   \begin{align*}
   	{\bf Z}{\bf S}_{\gamma,M}{\bf Z}&=\frac{1}{2}[({\bf Z}^2{\bf S}_{\gamma,M}+{\bf S}_{\gamma,M}{\bf Z}^2)+||\Delta_{{\bf S}_{\gamma,M}}({\bf Z}^2)||_2{\bf I}_M]\\
   	&=(\hat{b}^2+\check{b}^2){\bf S}_{\gamma,M}+\left(\frac{\mu_{\gamma}(||\{s_k^{(\gamma)}\}||_{\mathcal{D}_{\gamma}},2\hat{b}\tilde{b},\check{b}^2)}{(1+M)^{\gamma}}\right){\bf I}_{M},
   \end{align*}
   which together with \eqref{spartcontroleq1} implies that
   \begin{align*}
   	|{\bf y}^{*}({\bf Z}{\bf S}_{\gamma,M}-{\bf S}_{\gamma,M}{\bf Z}){\bf y}|&\leq {\bf y}^{*}{\bf Z}{\bf S}_{\gamma,M}{\bf Z}{\bf y}+{\bf y}^{*}{\bf S}_{\gamma,M}{\bf y}\\
   	&\leq (\hat{b}^2+\check{b}^2+1){\bf y}^{*}{\bf S}_{\gamma,M}{\bf y}+\left(\frac{\mu_{\gamma}(||\{s_k^{(\gamma)}\}||_{\mathcal{D}_{\gamma}},2\hat{b}\tilde{b},\check{b}^2)}{(1+M)^{\gamma}}\right){\bf y}^{*}{\bf y}\\
   	&=\nu_1(\hat{b},\check{b}){\bf y}^{*}{\bf S}_{\gamma,M}{\bf y}+\frac{\nu_2(\gamma,\hat{b},\check{b},\tilde{b})}{(1+M)^{\gamma}}{\bf y}^{*}{\bf y}.
   \end{align*}
   The proof is complete.
   \end{proof}

   \begin{lemma}\label{2dspartcontrollm}
   	   	 Assume $d\in\mathcal{L}_{1}(\Omega)$, $e\in\mathcal{L}_{2}(\Omega)$. Take $\Delta t\leq \frac{c_3}{b_3}$ with $c_3,b_3>0$ being constants given as follows
   	   	 \begin{align*}
   	   	 	&c_3=\max\left\{\frac{\hat{d}^2+\check{d}^2+1}{2\bar{d}},\frac{\hat{e}^2+\check{e}^2+1}{2\bar{e}}\right\},\\ &b_3=\frac{||\{s_k^{(\alpha)}\}_{k\geq 0}||_{\mathcal{D}_{\alpha}}\hat{d}^2|d|^2_{\mathcal{L}_1(\Omega)}(r_1-l_1)^{2+\alpha}}{\check{d}^2(2-\alpha)}+\frac{||\{s_k^{(\beta)}\}_{k\geq 0}||_{\mathcal{D}_{\beta}}\hat{e}^2|e|^2_{\mathcal{L}_2(\Omega)}(r_2-l_2)^{2+\beta}}{\check{e}^2(2-\beta)}.
   	   	 \end{align*}
   	 Then, for any nonzero vector ${\bf x}\in\mathbb{C}^{J\times 1}$, it holds that
   	   	 \begin{equation*}
   	   	 	\frac{|{\bf x}^{*}\mathcal{S}({\bf A}){\bf x}|}{{\bf x}^{*}\tilde{\bf P}{\bf x}}\leq c_3.
   	   	 \end{equation*}
   \end{lemma}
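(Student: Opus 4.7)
The plan is to split $\mathcal{S}({\bf A})$ into its $x$- and $y$-commutators, block-diagonalize each one, and apply the single-variable Lemma~\ref{spartcontrollm} to each block. Since ${\bf I}_J$, ${\bf S}_{\alpha,M_x}$, ${\bf S}_{\beta,M_y}$, ${\bf D}$ and ${\bf E}$ are all symmetric,
\[
\mathcal{S}({\bf A})=\tfrac{\eta_x}{2}\bigl[{\bf D}({\bf I}_{M_y}\otimes {\bf S}_{\alpha,M_x})-({\bf I}_{M_y}\otimes {\bf S}_{\alpha,M_x}){\bf D}\bigr]+\tfrac{\eta_y}{2}\bigl[{\bf E}({\bf S}_{\beta,M_y}\otimes {\bf I}_{M_x})-({\bf S}_{\beta,M_y}\otimes {\bf I}_{M_x}){\bf E}\bigr].
\]
In the $x$-dominant ordering \eqref{vxydef}, ${\bf D}$ is block-diagonal with blocks ${\bf Z}_j:=\diag(d(x_1,y_j),\ldots,d(x_{M_x},y_j))$ and ${\bf I}_{M_y}\otimes{\bf S}_{\alpha,M_x}$ is block-diagonal with $M_y$ copies of ${\bf S}_{\alpha,M_x}$, so the $x$-commutator decouples into $M_y$ independent single-variable commutators ${\bf Z}_j{\bf S}_{\alpha,M_x}-{\bf S}_{\alpha,M_x}{\bf Z}_j$. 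For the $y$-commutator, conjugation by the permutation ${\bf P}_{x\leftrightarrow y}$ passes to the $y$-dominant ordering and yields the analogous block-diagonal structure with $M_x$ blocks of the form ${\bf W}_i{\bf S}_{\beta,M_y}-{\bf S}_{\beta,M_y}{\bf W}_i$, where ${\bf W}_i:=\diag(e(x_i,y_1),\ldots,e(x_i,y_{M_y}))$.

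Next, I would apply Lemma~\ref{spartcontrollm} blockwise. The assumptions $d\in\mathcal{L}_1(\Omega)$ and $\check{d}\leq d\leq\hat{d}$ supply the inputs $\hat{b}=\hat{d}$, $\check{b}=\check{d}$, and via $\nabla({\bf Z}_j)\leq |d|_{\mathcal{L}_1(\Omega)}\Delta x=|d|_{\mathcal{L}_1(\Omega)}(r_1-l_1)/(M_x+1)$, also $\tilde{b}=|d|_{\mathcal{L}_1(\Omega)}(r_1-l_1)$; partitioning ${\bf x}\in\mathbb{C}^{J\times 1}$ into $M_y$ blocks of length $M_x$ and summing then gives
\[
\bigl|{\bf x}^{*}[{\bf D}({\bf I}_{M_y}\otimes {\bf S}_{\alpha,M_x})-({\bf I}_{M_y}\otimes {\bf S}_{\alpha,M_x}){\bf D}]{\bf x}\bigr|\leq \nu_1(\hat{d},\check{d})\,{\bf x}^{*}({\bf I}_{M_y}\otimes {\bf S}_{\alpha,M_x}){\bf x}+\tfrac{\nu_2(\alpha,\hat{d},\check{d},\tilde{b})}{(M_x+1)^{\alpha}}{\bf x}^{*}{\bf x}.
\]
The $y$-counterpart follows identically with $(\alpha,\hat{d},\check{d},M_x,r_1-l_1,|d|_{\mathcal{L}_1(\Omega)})$ replaced by $(\beta,\hat{e},\check{e},M_y,r_2-l_2,|e|_{\mathcal{L}_2(\Omega)})$.

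Finally, I would divide the resulting bound on $|{\bf x}^{*}\mathcal{S}({\bf A}){\bf x}|$ by ${\bf x}^{*}\tilde{\bf P}{\bf x}$ and use the three inequalities $\tilde{\bf P}\succeq\bar{d}\eta_x({\bf I}_{M_y}\otimes{\bf S}_{\alpha,M_x})$, $\tilde{\bf P}\succeq\bar{e}\eta_y({\bf S}_{\beta,M_y}\otimes{\bf I}_{M_x})$, and $\tilde{\bf P}\succeq{\bf I}_J$. The first two absorb the ${\bf S}_{\alpha,M_x}$ and ${\bf S}_{\beta,M_y}$ terms and produce the constants $\nu_1(\hat{d},\check{d})/(2\bar{d})$ and $\nu_1(\hat{e},\check{e})/(2\bar{e})$, both $\leq c_3$ by definition of $c_3$. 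The remaining $L^2$ terms simplify via $\eta_x/(M_x+1)^{\alpha}=\Delta t/(r_1-l_1)^{\alpha}$ (and its $y$-analogue) into a contribution of the form $b_3\Delta t\cdot {\bf x}^{*}{\bf x}/{\bf x}^{*}\tilde{\bf P}{\bf x}$, which is $\leq c_3$ precisely under the hypothesis $\Delta t\leq c_3/b_3$. The main difficulty is pure bookkeeping — setting up the right block/permutation decompositions and tracking the $\eta_x/(M_x+1)^{\alpha}$ scaling so that the $L^2$ remainders collapse into a clean $\Delta t$ contribution — since the core commutator estimate is already supplied by Lemma~\ref{spartcontrollm}.
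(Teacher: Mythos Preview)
Your overall strategy matches the paper's proof exactly: split $\mathcal{S}({\bf A})$ into the $x$- and $y$-commutators, block-diagonalize (using ${\bf P}_{x\leftrightarrow y}$ for the $y$-part), and apply Lemma~\ref{spartcontrollm} to each block. The intermediate estimate you obtain,
\[
|{\bf x}^{*}\mathcal{S}({\bf A}){\bf x}|\le \tfrac{\nu_1(\hat d,\check d)}{2}\,\eta_x\,{\bf x}^{*}({\bf I}_{M_y}\otimes{\bf S}_{\alpha,M_x}){\bf x}+\tfrac{\nu_1(\hat e,\check e)}{2}\,\eta_y\,{\bf x}^{*}({\bf S}_{\beta,M_y}\otimes{\bf I}_{M_x}){\bf x}+b_3\Delta t\,{\bf x}^{*}{\bf x},
\]
is the same as the paper's.

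There is, however, a genuine gap in your final step. You propose to divide by ${\bf x}^{*}\tilde{\bf P}{\bf x}$ and bound the three resulting ratios \emph{separately}, using the three one-sided inequalities $\tilde{\bf P}\succeq\bar d\eta_x({\bf I}_{M_y}\otimes{\bf S}_{\alpha,M_x})$, $\tilde{\bf P}\succeq\bar e\eta_y({\bf S}_{\beta,M_y}\otimes{\bf I}_{M_x})$, $\tilde{\bf P}\succeq{\bf I}_J$. Each ratio is then $\le c_3$, but their \emph{sum} is only $\le 3c_3$, not $c_3$, so the lemma does not follow. The paper avoids this loss by not splitting the denominator: it observes that $\nu_1(\hat d,\check d)/2\le c_3\bar d$, $\nu_1(\hat e,\check e)/2\le c_3\bar e$ (both by definition of $c_3$) and $b_3\Delta t\le c_3$ (by hypothesis), so the right-hand side above is bounded by
\[
c_3\bigl[{\bf x}^{*}{\bf x}+\bar d\eta_x\,{\bf x}^{*}({\bf I}_{M_y}\otimes{\bf S}_{\alpha,M_x}){\bf x}+\bar e\eta_y\,{\bf x}^{*}({\bf S}_{\beta,M_y}\otimes{\bf I}_{M_x}){\bf x}\bigr]=c_3\,{\bf x}^{*}\tilde{\bf P}{\bf x},
\]
since $\tilde{\bf P}$ is \emph{exactly} the sum of those three pieces. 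In short, you must use the identity $\tilde{\bf P}={\bf I}_J+\bar d\eta_x(\cdots)+\bar e\eta_y(\cdots)$ rather than the three weaker inequalities it implies.
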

   \begin{proof}
   	By definition of ${\bf A}$, we have
   	\begin{align*}
   	|{\bf x}^{*}\mathcal{S}({\bf A}){\bf x}|=|\eta_x{\bf x}^{*}\mathcal{S}({\bf A}_x){\bf x}+\eta_y{\bf x}^{*}\mathcal{S}({\bf A}_y){\bf x}|\leq \eta_x|{\bf x}^{*}\mathcal{S}({\bf A}_x){\bf x}|+\eta_y|{\bf x}^{*}\mathcal{S}({\bf A}_y){\bf x}|
   	\end{align*}
   	Rewrite ${\bf x}$ as ${\bf x}=({\bf x}_1;{\bf x}_2;\cdots;{\bf x}_{M_y})$ with ${\bf x}_j\in\mathbb{C}^{M_x\times 1}$ for each $j$. Denote ${\bf D}_{j}={\rm diag}(d_{i,j})_{i=1}^{M_x}$ for $j\in 1\wedge M_y$. Then, for each $j$, it holds that
   	\begin{equation*}
   	\nabla({\bf D}_j)\leq |d|_{\mathcal{L}_1(\Omega)}h_x=\frac{|d|_{\mathcal{L}_1(\Omega)}(r_1-l_1)}{M+1},\quad \hat{d}\geq \max({\bf D}_j)\geq \min({\bf D}_j)\geq \check{d}>0.
   \end{equation*}
   Lemma \ref{spartcontrollm} implies that
   	\begin{align*}
   	\eta_x|{\bf x}^{*}\mathcal{S}({\bf A}_x){\bf x}|&=\frac{	\eta_x}{2}|{\bf x}^{*}[{\bf D}({\bf I}_{M_y}\otimes {\bf S}_{\alpha,M_x})-({\bf I}_{M_y}\otimes {\bf S}_{\alpha,M_x}){\bf D}]{\bf x}|\\
   	&=\frac{	\eta_x}{2}\left|\sum\limits_{j=1}^{M_y}{\bf x}_{j}^{*}({\bf D}_{j}{\bf S}_{\alpha,M_x}-{\bf S}_{\alpha,M_x}{\bf D}_{j}){\bf x}_{j}\right|\\
   	&\leq \frac{	\eta_x}{2}\sum\limits_{j=1}^{M_y}|{\bf x}_{j}^{*}({\bf D}_{j}{\bf S}_{\alpha,M_x}-{\bf S}_{\alpha,M_x}{\bf D}_{j}){\bf x}_{j}|\\
   	&\leq \frac{	\eta_x}{2}\sum\limits_{j=1}^{M_y}\nu_1(\hat{d},\check{d}){\bf x}_{j}^{*}{\bf S}_{\alpha,M_x}{\bf x}_{j}+\frac{\nu_2(\alpha,\hat{d},\check{d},|d|_{\mathcal{L}_1(\Omega)}(r_1-l_1))}{(1+M_x)^{\alpha}}{\bf x}_{j}^{*}{\bf x}_{j}\\
   	&=\frac{\nu_1(\hat{d},\check{d})\eta_x}{2}{\bf x}^{*}({\bf I}_{M_y}\otimes{\bf S}_{\alpha,M_x}){\bf x}+\frac{\nu_2(\alpha,\hat{d},\check{d},|d|_{\mathcal{L}_1(\Omega)}(r_1-l_1))\Delta t}{2(r_1-l_1)^{-\alpha}}{\bf x}^{*}{\bf x}\\
   	&\leq c_3\bar{d}\eta_x{\bf x}^{*}({\bf I}_{M_y}\otimes{\bf S}_{\alpha,M_x}){\bf x}+\frac{\nu_2(\alpha,\hat{d},\check{d},|d|_{\mathcal{L}_1(\Omega)}(r_1-l_1))\Delta t}{2(r_1-l_1)^{-\alpha}}{\bf x}^{*}{\bf x},
   	\end{align*}
   	where $\nu_1$ and $\nu_2$ are defined in Lemma \ref{spartcontrollm}.
    On the other hand, $\eta_y|{\bf x}^{*}\mathcal{S}({\bf A}_y){\bf x}|=\frac{	\eta_y}{2}|{\bf x}^{*}[{\bf E}({\bf S}_{\beta,M_y}\otimes {\bf I}_{M_x} )-({\bf S}_{\beta,M_y}\otimes {\bf I}_{M_x} ){\bf E}]{\bf x}|$. Denote $\tilde{\bf x}={\bf P}_{x\leftrightarrow y}{\bf x}$, $\tilde{\bf E}={\rm diag}(e({\bf V}_{y,x}))$. 
   By applying the permutation matrix ${\bf P}_{x\leftrightarrow y}$, we have
   \begin{align*}
   &	\frac{	\eta_y}{2}|{\bf x}^{*}[{\bf E}({\bf S}_{\beta,M_y}\otimes {\bf I}_{M_x} )-({\bf S}_{\beta,M_y}\otimes {\bf I}_{M_x} ){\bf E}]{\bf x}|\\
   	&=\frac{	\eta_y}{2}|\tilde{\bf x}^{*}[\tilde{\bf E}({\bf I}_{M_x}\otimes {\bf S}_{\beta,M_y}  )-({\bf I}_{M_x}\otimes {\bf S}_{\beta,M_y} )\tilde{\bf E}]\tilde{\bf x}|.
   \end{align*}
   $\frac{	\eta_y}{2}|\tilde{\bf x}^{*}[\tilde{\bf E}({\bf I}_{M_x}\otimes {\bf S}_{\beta,M_y}  )-({\bf I}_{M_x}\otimes {\bf S}_{\beta,M_y} )\tilde{\bf E}]\tilde{\bf x}|$ has a similar structure to $\frac{	\eta_x}{2}|{\bf x}^{*}[{\bf D}({\bf I}_{M_y}\otimes {\bf S}_{\alpha,M_x})-({\bf I}_{M_y}\otimes {\bf S}_{\alpha,M_x}){\bf D}]{\bf x}|$. Repeating the discussion above, one can show that
   \begin{align*}
   	&\frac{	\eta_y}{2}|\tilde{\bf x}^{*}[\tilde{\bf E}({\bf I}_{M_x}\otimes {\bf S}_{\beta,M_y}  )-({\bf I}_{M_x}\otimes {\bf S}_{\beta,M_y} )\tilde{\bf E}]\tilde{\bf x}|\\
   	&\leq \frac{\nu_1(\hat{e},\check{e})\eta_y}{2}\tilde{\bf x}^{*}({\bf I}_{M_x}\otimes{\bf S}_{\beta,M_y})\tilde{\bf x}+\frac{\nu_2(\beta,\hat{e},\check{e},|e|_{\mathcal{L}_2(\Omega)}(r_2-l_2))\Delta t}{2(r_2-l_2)^{-\beta}}\tilde{\bf x}^{*}\tilde{\bf x}\\
   	&=\frac{\nu_1(\hat{e},\check{e})\eta_y}{2}{\bf x}^{*}({\bf S}_{\beta,M_y}\otimes{\bf I}_{M_x}){\bf x}+\frac{\nu_2(\beta,\hat{e},\check{e},|e|_{\mathcal{L}_2(\Omega)}(r_2-l_2))\Delta t}{2(r_2-l_2)^{-\beta}}{\bf x}^{*}{\bf x}\\
   	&\leq c_3\bar{d}\eta_y{\bf x}^{*}({\bf S}_{\beta,M_y}\otimes{\bf I}_{M_x}){\bf x}+\frac{\nu_2(\beta,\hat{e},\check{e},|e|_{\mathcal{L}_2(\Omega)}(r_2-l_2))\Delta t}{2(r_2-l_2)^{-\beta}}{\bf x}^{*}{\bf x}.
   \end{align*}
   That means 
   \begin{align*}
   	|{\bf x}^{*}\mathcal{S}({\bf A}){\bf x}|\leq& \eta_x|{\bf x}^{*}\mathcal{S}({\bf A}_x){\bf x}|+\eta_y|{\bf x}^{*}\mathcal{S}({\bf A}_y){\bf x}|\\
   	\leq &c_3\bar{d}\eta_x{\bf x}^{*}({\bf I}_{M_y}\otimes{\bf S}_{\alpha,M_x}){\bf x}+c_3\bar{e}\eta_y{\bf x}^{*}({\bf S}_{\beta,M_y}\otimes{\bf I}_{M_x}){\bf x}\\
   	&+\frac{\nu_2(\alpha,\hat{d},\check{d},|d|_{\mathcal{L}_1(\Omega)}(r_1-l_1))\Delta t}{2(r_1-l_1)^{-\alpha}}{\bf x}^{*}{\bf x}\\
   	&+\frac{\nu_2(\beta,\hat{e},\check{e},|e|_{\mathcal{L}_2(\Omega)}(r_2-l_2))\Delta t}{2(r_2-l_2)^{-\beta}}{\bf x}^{*}{\bf x}\\
   	=&c_3\bar{d}\eta_x{\bf x}^{*}({\bf I}_{M_y}\otimes{\bf S}_{\alpha,M_x}){\bf x}+c_3\bar{e}\eta_y{\bf x}^{*}({\bf S}_{\beta,M_y}\otimes{\bf I}_{M_x}){\bf x}+b_3\Delta t{\bf x}^{*}{\bf x}\\
   	\leq& c_3\bar{d}\eta_x{\bf x}^{*}({\bf I}_{M_y}\otimes{\bf S}_{\alpha,M_x}){\bf x}+c_3\bar{e}\eta_y{\bf x}^{*}({\bf S}_{\beta,M_y}\otimes{\bf I}_{M_x}){\bf x}+c_3{\bf x}^{*}{\bf x}=c_3{\bf x}^{*}\tilde{\bf P}{\bf x}.
   \end{align*}
   The proof is complete.
   \end{proof}
   
   \begin{theorem}\label{finalthm}
   	Assume $d\in\mathcal{L}_1(\Omega),e\in\mathcal{L}_2(\Omega)$. Take $\Delta t\leq c_{*}$ with $c_{*}$ being a constant independent of discretization step-sizes defined as follows
   	\begin{equation*}
   	c_{*}=\min\left\{c_0,\frac{c_3}{b_3}\right\}>0,
   	\end{equation*}
   	where $c_0,c_3,b_3$ have been defined in Lemmas \ref{prehpartminmaxlm}, \ref{2dspartcontrollm}. 
   	Then, GMRES solver for the preconditioned system \eqref{precsystem} has a linear convergence rate independent of step-sizes, i.e., the residuals generated by (restarted or non-restarted) GMRES solver satisfy
   	\begin{equation*}
   	||{\bf r}_{k}||_2\leq \theta^{k}||\hat{\bf r}_0||_2, \quad k\geq 1,
   	\end{equation*}
   	where ${\bf r}_{k}={\bf P}^{-1}{\bf b}-{\bf P}^{-1}{\bf A}{\bf u}_k$ with ${\bf u}_k$ denoting the $k$-th GMRES iteration for $k\geq 1$; $\hat{\bf r}_0={\bf P}^{-\frac{1}{2}}{\bf b}-{\bf P}^{-\frac{1}{2}}{\bf A}{\bf u}_0$ with ${\bf u}_0\in\mathbb{R}^{J\times 1}$ being an arbitrary initial guess;
   	\begin{equation*}
   	\theta=\sqrt{1-\frac{c_1^2}{3c_1c_2+9c_3^2}}\in(0,1),
   	\end{equation*}
   	is a constant independent of step-sizes; $c_1,c_2,c_3$ have been defined in Lemmas \ref{prehpartminmaxlm}, \ref{2dspartcontrollm} .
   \end{theorem}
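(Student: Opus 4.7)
The plan is to assemble the pieces already built up: Lemma \ref{residual} transfers the GMRES convergence question from the preconditioned system \eqref{precsystem} to the symmetrically preconditioned auxiliary system \eqref{auxsystem}; Elman's bound (Lemma \ref{gmrescvglm}) then produces a linear rate from the numerical range of ${\bf M}:={\bf P}^{-1/2}{\bf A}{\bf P}^{-1/2}$; and Lemmas \ref{prehpartminmaxlm} and \ref{2dspartcontrollm} supply exactly the Hermitian/skew-Hermitian part estimates needed to feed into that bound.

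First I would set $\hat{\bf u}_0={\bf P}^{1/2}{\bf u}_0$. This is legitimate since ${\bf u}_0$ in the theorem statement is arbitrary, and it makes $\hat{\bf r}_0={\bf P}^{-1/2}{\bf b}-{\bf P}^{-1/2}{\bf A}{\bf u}_0$ as defined in the theorem coincide with the initial auxiliary residual appearing in Lemma \ref{residual}. That lemma then reduces the target estimate $||{\bf r}_k||_2\le \theta^k ||\hat{\bf r}_0||_2$ to the analogous contraction $||\hat{\bf r}_k||_2\le \theta^k ||\hat{\bf r}_0||_2$ for the GMRES iterates of \eqref{auxsystem}.

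Next I would apply Lemma \ref{gmrescvglm} to ${\bf M}$. Combining inequalities \eqref{skewpartcontrl1}--\eqref{hpartlbcontrol1} (whose pre-factors $1/2$ and $3/2$ already incorporate the equivalence \eqref{ptildpcontorleq} between ${\bf P}$ and $\tilde{\bf P}$) with Lemmas \ref{prehpartminmaxlm} and \ref{2dspartcontrollm} immediately yields
\[
\lambda_{\min}(\mathcal{H}({\bf M}))\ge \tfrac{c_1}{2},\qquad \lambda_{\max}(\mathcal{H}({\bf M}))\le \tfrac{3c_2}{2},\qquad \rho(\mathcal{S}({\bf M}))\le \tfrac{3c_3}{2}.
\]
The hypothesis $\Delta t\le c_{*}=\min\{c_0,c_3/b_3\}$ is precisely what activates both lemmas simultaneously, and positivity of $\lambda_{\min}(\mathcal{H}({\bf M}))$ validates the use of Lemma \ref{gmrescvglm}. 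Its contraction factor then evaluates to
\[
1-\frac{(c_1/2)^2}{(c_1/2)(3c_2/2)+(3c_3/2)^2}=1-\frac{c_1^2}{3c_1c_2+9c_3^2}=\theta^2,
\]
so $||\hat{\bf r}_k||_2\le \theta^k||\hat{\bf r}_0||_2$, and Lemma \ref{residual} transports this to the required bound $||{\bf r}_k||_2\le \theta^k||\hat{\bf r}_0||_2$ on the original preconditioned system.

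The principal obstacles were already overcome in the three preceding technical lemmas (the spectral equivalence \eqref{ptildpcontorleq}, Lemma \ref{prehpartminmaxlm}, and Lemma \ref{2dspartcontrollm}), so the proof of the theorem itself is essentially a bookkeeping exercise. The only ancillary items I would still check are that $\theta\in(0,1)$ (immediate because $c_2\ge\sqrt{2}>1>c_1$ by the definitions in Lemma \ref{prehpartminmaxlm}, so $3c_1c_2+9c_3^2>c_1^2$) and that the constants $c_1,c_2,c_3$ are genuinely independent of $\Delta t,\Delta x,\Delta y$, which is already guaranteed by the formulations of Lemmas \ref{prehpartminmaxlm} and \ref{2dspartcontrollm}.
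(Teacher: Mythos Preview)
Your proposal is correct and follows essentially the same route as the paper: set $\hat{\bf u}_0={\bf P}^{1/2}{\bf u}_0$, invoke Lemma \ref{residual} to pass to the auxiliary system, apply Lemma \ref{gmrescvglm}, and feed in the eigenvalue bounds obtained by combining \eqref{skewpartcontrl1}--\eqref{hpartlbcontrol1} with Lemmas \ref{prehpartminmaxlm} and \ref{2dspartcontrollm}. The paper additionally notes the monotonicity of the function $B(x,y,z)=\sqrt{1-x^2/(xy+z^2)}$ to justify plugging in the one-sided bounds, whereas you compute the contraction factor directly and add the explicit verification that $\theta\in(0,1)$; both are minor presentational differences.
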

   \begin{proof}
   	Denote $\hat{\bf u}_0={\bf P}^{\frac{1}{2}}{\bf u}_0$. Denote $\hat{\mathbf{r}}_{j}:=\mathbf{P}^{-\frac{1}{2}} \mathbf{f}-\mathbf{P}^{-\frac{1}{2}} \mathbf{A} \mathbf{P}^{-\frac{1}{2}}\hat{\mathbf{u}}_{j}$ with ${\mathbf{u}}_{j}$ denoting the $j$-th iterative solution by applying GMRES solver to the auxiliary system \eqref{auxsystem} with $\hat{\bf u}_0$ as initial guess. Then, by Lemma \ref{residual}, we know that
   	\begin{equation}\label{hatrjcontrolrj}
   	||{\bf r}_k||_2\leq ||\hat{\bf r}_k||_2,\quad k\geq 1.
   	\end{equation}
    
   	Denote
   	\begin{equation*}
   		B(x,y,z)=\sqrt{1-\frac{x^2}{xy+z^2}},\quad y\geq x>0,\quad z\geq 0.
   	\end{equation*}
   	Since $\mathcal{H}(\mathbf{P}^{-\frac{1}{2}} \mathbf{A} \mathbf{P}^{-\frac{1}{2}})\succ {\bf O}$, Lemma \ref{gmrescvglm} is applicable to the auxiliary system \eqref{auxsystem}, which means
   	\begin{align*}
   		&||\hat{\bf r}_{k}||_2\leq[B(\lambda_{\min}(\mathcal{H}({\bf P}^{-\frac{1}{2}}{\bf A}  {\bf P}^{-\frac{1}{2}})),\lambda_{\max}(\mathcal{H}({\bf P}^{-\frac{1}{2}}{\bf A}  {\bf P}^{-\frac{1}{2}})),\rho(\mathcal{S}({\bf P}^{-\frac{1}{2}}{\bf A}  {\bf P}^{-\frac{1}{2}})))]^{k}||\hat{\bf r}_{0}||_2,\\
   		& k\geq 1.
   	\end{align*}
   Moreover, it is clear that on the domain of definition, $B(x,y,z)$ is monotonically decreasing with respect to $x\in (0,+\infty)$, and monotonically increasing with respect to $y\in [x,+\infty)$ and to $z\in[0,+\infty)$. By Lemmas \ref{prehpartminmaxlm}, \ref{2dspartcontrollm} and  \eqref{skewpartcontrl1}--\eqref{hpartlbcontrol1}, we know that
   \begin{align*}
   &	\lambda_{\max}(\mathcal{H}({\bf P}^{-\frac{1}{2}}{\bf A}  {\bf P}^{-\frac{1}{2}}))\leq \frac{3c_2}{2},\quad \lambda_{\min}(\mathcal{H}({\bf P}^{-\frac{1}{2}}{\bf A}  {\bf P}^{-\frac{1}{2}}))\geq \frac{c_1}{2}>0\\
   &\rho(\mathcal{S}({\bf P}^{-\frac{1}{2}}{\bf A}  {\bf P}^{-\frac{1}{2}}))\leq \frac{3c_3}{2}.
   \end{align*}
   Therefore, 
   \begin{equation*}
   	||\hat{\bf r}_{k}||_2\leq [B(c_1/2,3c_2/2,3c_3/2)]^{k}||\hat{\bf r}_{0}||_2=\theta^k||\hat{\bf r}_{0}||_2,\quad k\geq 1,
   \end{equation*}
   which together with \eqref{hatrjcontrolrj} implies that
   \begin{equation*}
   	||{\bf r}_k||_2\leq \theta^k||\hat{\bf r}_{0}||_2.
   \end{equation*}
   The proof is complete.
   \end{proof}
   
   \begin{rem}
   	Compared with the assumptions used in \cite{lin2017splitting}, the assumptions in Theorem \ref{finalthm} are mild. The theory in \cite{lin2017splitting} assumes that $d$ and $e$ are both functions of one variable or that $d$ is proportional to $e$, which is in contrast to the theory in this paper only assuming  $d\in\mathcal{L}_1(\Omega)$ and $e\in\mathcal{L}_2(\Omega)$. Actually, an uniformly bounded $|\partial_x d|$ ($|\partial_y e|$, respectively) is sufficient to guarantee $d\in\mathcal{L}_1(\Omega)$ ($e\in\mathcal{L}_2(\Omega)$, respectively). Moreover, since $c_*$ is a constant independent of $\Delta t$, $\Delta x$ and $\Delta y$,  the assumption $\Delta t\leq c_{*}$ is easily satisfied by taking a properly small temporal step-size $\Delta t$. To conclude, Theorem \ref{finalthm} shows that with our proposed $\tau$ preconditioner, the GMRES solver for the preconditioned Toeplitz like system has a convergence rate not deteriorating as the grid get refined, under mild assumptions.
   \end{rem}

\section{The $\tau$ Preconditioner for Multi-dimension SFDE with Variable Coefficient}\label{multidimensionsection}

    Consider the following multi-dimension SFDE with variable coefficients.
    	\begin{align}
    	&(\partial_t u)({\bf x},t)=\sum\limits_{i=1}^{l}d_i({\bf x})(\partial_i^{\alpha_i}u)({\bf x},t)+f({\bf x},t),\qquad({\bf x},t)\in\Omega\times(0,T],\label{mdrsdiffusioneq}\\
    	&u({\bf x},t)=0,\qquad\qquad\qquad \qquad\qquad\qquad\qquad\qquad~~ ({\bf x},t)\in\partial\Omega\times(0,T],\label{mddrcheltboundary}\\
    	&u({\bf x},0) =\psi({\bf x}),\qquad\qquad\qquad\quad\quad\qquad\qquad\qquad\quad {\bf x}\in\bar{\Omega},\label{mdinitialcondition}
    \end{align}
    where $\Omega=\prod\limits_{i=1}^{m}(l_i,r_i)\subset\mathbb{R}^m$ is bounded; $\partial\Omega$ and $\bar{\Omega}$ denotes boundary, closure of $\Omega$, respectively; ${\bf x}=(x_1,x_2,...,x_m)\in\mathbb{R}^m$; $u({\bf x},t)$ is unknown to be solved; $d_i({\bf x})$ is positive over $\Omega\times(0,T]$ with $\hat{d}_i\geq d_i({\bf x})\geq \check{d}_i>0$ for positive constants $\hat{d}_i,\check{d}_i>0,~(1\leq i\leq m)$;
    $f({\bf x},t)$ and $\psi({\bf x})$ are given source term and initial condition, respectively; $\partial_t u$ is the first-order partial derivative of $u$ with respect to $t$; $\alpha_1,\alpha_2,...,\alpha_m\in(1,2)$, 	$\partial_i^{\alpha_i}u$ is the Riesz fractional derivative of order $\alpha_i$ with respect to $x_i$ defined by
    	\begin{equation*}
    	(\partial_i^{\alpha_i}u)({\bf x},t):=\frac{-1}{2\cos(\alpha_i\pi/2)\Gamma(2-\alpha_i)}\frac{\partial^2}{\partial x_i^2}\int_{l_i}^{r_i}\frac{u(x_1,x_2,...,x_{i-1},\xi,x_{i+1},...,x_m,t)}{|x_i-\xi|^{\alpha_i-1}}d\xi,
    \end{equation*}
		
    \subsection{The multi-level Toeplitz like system arising from discretization of \eqref{mdrsdiffusioneq}--\eqref{mdinitialcondition}}
    In this subsection, we present the discretization of \eqref{mdrsdiffusioneq}--\eqref{mdinitialcondition} on uniform grid.
    
    For positive integers $M_i (i=1,2,...,m)$ and $N$, let $\Delta t=T/N$ and $h_i=(b_i-a_i)/(M_i+1) (i=1,2,...,m)$. Denote  $t_n=n\Delta t$ for $n\in 0\wedge N$. Denote $x_{i,j}=a_i+jh_i$, for $0\leq j\leq M_i+1$, $i\in 1\wedge m$.
    Let $\mathbb{N}$ be the set of all integer numbers.
    For $i=1,2,...,m,$ define
    $\mathbb{I}_i=\{j\in\mathbb{N}|1\leq j\leq M_i\}$, $\hat{\mathbb{I}}_i=\{j\in\mathbb{N}|0\leq j\leq M_i+1\}$, $\mathbb{K}=\mathbb{I}_1\times\mathbb{I}_2\times\cdots\times\mathbb{I}_m$, $\hat{\mathbb{K}}=\hat{\mathbb{I}}_1\times\hat{\mathbb{I}}_2\times\cdots\times\hat{\mathbb{I}}_m$, $\partial\mathbb{K}=\hat{\mathbb{K}}\setminus\mathbb{K}$.  For a multiindex $K=(k_1,k_2,...,k_m)\in\hat{\mathbb{K}}$, denote ${\bf x}_K=(x_{1,k_1},x_{2,k_2},...,x_{l,k_m})$. We can then define the set of grid points inside $\Omega$ as follows
    \begin{equation*}
    \mathcal{G}:=\{{\bf x}_K|K\in\mathbb{K}\}.
    \end{equation*}
      Denote by $\mathcal{V}(\mathcal{G})$, the vector obtained from arrange the grid points in $\mathcal{G}$ in a lexicographic ordering \cite{linstbcvg2017}. Then, similar to the derivation of \eqref{toeplitz-likesystem}, the linear system arising from discretization of \eqref{mdrsdiffusioneq}--\eqref{mdinitialcondition} at $n$-th time step can be described as follows \cite{linstbcvg2017}
      \begin{equation}\label{mdtoeplikesys}
      {\bf A}{\bf u}^n={\bf b}^n,\quad n=1,2,...,N,
      \end{equation}
      where ${\bf b}^n={\bf u}^{n-1}+\Delta t{\bf f}^{n}$, ${\bf f}^{n}=f(\mathcal{V}(\mathcal{G}),t_n)$, ${\bf u}^0=\psi(\mathcal{V}(\mathcal{G}))$, the unknown ${\bf u}^n$ is an approximation of $u(\mathcal{V}(\mathcal{G}),t_n)$,
      \begin{align*}
      &{\bf A}={\bf I}_J+\sum\limits_{i=1}^{m}\eta_i{\bf A}_i,\quad \eta_i=\frac{\Delta t}{h_i^{\alpha_i}},\quad {\bf A}_i={\bf D}_i({\bf I}_{M_i^{-}}\otimes{\bf S}_{\alpha_i,M_i}\otimes{\bf I}_{M_i^{+}}),\quad {\bf D}_i={\rm diag}(d_i(\mathcal{V}(\mathcal{G}))),\\
      &M_i^{-}=\prod_{j=1}^{i-1}M_j, ~ M_i^{+}=\prod_{j=i+1}^{m}M_j,~ i\in 2\wedge (m-1),\quad M_1^{-}=M_m^{+}=1,\quad J=\prod\limits_{i=1}^{m}M_i.
      \end{align*}
    \subsection{The $\tau$-preconditioner for the multi-level Toeplitz like system \eqref{mdtoeplikesys} and the implementation}
    Now, our $\tau$-preconditioner for the multi-level Toeplitz like system \eqref{mdtoeplikesys} is defined as
    \begin{equation}\label{mdprecdef}
    {\bf P}:={\bf I}_J+\sum\limits_{i=1}^{m}\eta_i\bar{\bf A}_i,
    \end{equation}
    where
    \begin{equation*}
   \bar{\bf A}_i=\bar{d}_i{\bf I}_{M_i^{-}}\otimes\tau({\bf S}_{\alpha_i,M_i})\otimes{\bf I}_{M_i^{+}},\quad \bar{d}_i=\sqrt{\hat{d}_i\check{d}_i},\quad i\in 1\wedge m.
    \end{equation*}
    
   Instead of solving \eqref{mdtoeplikesys}, we employ the GMRES solver to solve the following preconditioned system
   \begin{equation*}
   	{\bf P}^{-1}{\bf A}{\bf u}^n={\bf P}^{-1}{\bf b}^n,\quad n\in 1\wedge N.
   \end{equation*}
   For ease of statement, we neglect the superscript $n$ appearing in the systems above and use the following preconditioned linear system to represent any one of the linear systems above
    \begin{equation}\label{mdprecsys}
   	{\bf P}^{-1}{\bf A}{\bf u}={\bf P}^{-1}{\bf b},
    \end{equation}
    where ${\bf u}$ and ${\bf b}$ represent ${\bf u}^n$ and ${\bf b}^n$ respectively for some $n\in 1\wedge N$.
    
    ${\bf P}$ defined in \eqref{mdprecdef} is diagonalizable by a multi-dimension sine transform, i.e.,
    \begin{equation}\label{mdpdiagform}
    {\bf P}={\bf Q}{\bf \Lambda}{\bf Q}^{\rm T},
    \end{equation}
    where ${\bf Q}=\bigotimes\limits_{i=1}^{m}{\bf Q}_{M_i}$ with ${\bf Q}_{M_i}$ defined in \eqref{sinematdef} is the so called multi-dimension sine transform; ${\bf \Lambda}$ is a diagonal matrix whose diagonal entries are positive and explicitly known as follows
    \begin{equation*}
    {\bf \Lambda}:={\rm diag}(\lambda_K)_{K\in\mathbb{K}},\quad 	\lambda_K:=1+\sum\limits_{i=1}^{m}\bar{d}_i\eta_i\left(s_0^{(\alpha_i)}+2\sum\limits_{j=1}^{M_{i}-1}s_j^{(\alpha_i)}\cos\left(\frac{\pi K(i)j}{M_i+1}\right)\right).
    \end{equation*}
    
    Clearly, the ${\bf Q}$ defined in \eqref{mdpdiagform} is symmetric and the matrix-vector product associated with ${\bf Q}$ can be computed within $\mathcal{O}(J\log J)$ operations by utilizing  FFTs and the properties of Kronecker product. Then, similar to the implementation discussed in Section \ref{tauprecdef2d}, one can compute the matrix ${\bf P}^{-1}{\bf A}$ defined in \eqref{mdprecsys} times a given vector within $\mathcal{O}(J\log J)$ operations, which is nearly linear.

    \subsection{Convergence of GMRES for the preconditioned multi-level Toeplitz like system \eqref{mdprecsys}}
    	For ${\bf x}=(x_1,x_2,...,x_m),~{\bf y}=(y_1,y_2,...,y_m)\in\Omega$, denote
    \begin{align*}
    	&|{\bf x}-{\bf y}|=\bigg(\sum\limits_{i=1}^{m}|x_i-y_i|^2\bigg)^{\frac{1}{2}},\quad\mathcal{S}_1({\bf x})=(l_1,r_1)\times\left(\prod\limits_{k=2}^{m}\{x_k\}\right),\\
    	&\mathcal{S}_m({\bf x})=\left(\prod\limits_{k=1}^{m-1}\{x_k\}\right)\times(l_m,r_m),\quad \mathcal{S}_i({\bf x})=\prod\limits_{k=1}^{i-1}\{x_k\}\times(l_i,r_i)\times\prod\limits_{k=i+1}^{m}\{x_k\},\quad 2\leq i\leq m-1.
    \end{align*}
    Define the set of functions that are Lipschitz continuous with respect to the spatial variable $x_i~ (i=1,2,...,m)$ as
    \begin{equation*}
    	\mathcal{L}_i( \Omega):=\left\{w:\Omega\rightarrow \mathbb{R} \ \bigg| |w|_{\mathcal{L}_i( \Omega)}:=\sup\limits_{{\bf x}\in\Omega} \ \sup\limits_{{\bf y},{\bf z}\in \mathcal{S}_i({\bf x}),~{\bf y}\neq {\bf z}}\frac{|w({\bf y})-w({\bf z})|}{|{\bf y}-{\bf z}|}<+\infty\right\}.
    \end{equation*}

     Analogous to the proof of Theorem \ref{finalthm}, one can prove the following theorem.
     \begin{theorem}\label{mdfinalthm}
     Assume $d_i\in\mathcal{L}_i(\Omega)$, $i=1,2,...,m$. Take $\Delta t\leq c_*$ with $c_*$ being a constant independent of discretization step-sizes defined as follows
     \begin{equation*}
     c_*=\max\left\{\frac{1-c_1}{b_1},\frac{c_2-1}{b_2},\frac{c_3}{b_3}\right\}>0,
     \end{equation*}
     where
     \begin{align*}
     	&c_1=\min\limits_{i\in 1\wedge m}\sqrt{\frac{\check{d}_i}{2\hat{d}_i}}\in(0,1),\quad c_2=\max\limits_{i\in 1\wedge m}\sqrt{\frac{2\hat{d}_i}{\check{d}_i}}>\sqrt{2},\\
     	&b_1=\sum\limits_{i=1}^{m}\frac{||\{s_k^{(\alpha_i)}\}_{k\geq 0}||_{\mathcal{D}_{\alpha_i}}|d_i|_{\mathcal{L}_{i}(\Omega)}^2(r_i-l_i)^2}{4(1-\sqrt{2}/2)\check{d}_i(2-\alpha_i)},\\
     	&b_2=\sum\limits_{i=1}^{m}\frac{||\{s_k^{(\alpha_i)}\}_{k\geq 0}||_{\mathcal{D}_{\alpha_i}}|d_i|^2_{\mathcal{L}_i(\Omega)}(r_i-l_i)^{2}}{4(\sqrt{2}-1)\hat{d}_i(2-\alpha_i)},\\
     	&c_3=\max\limits_{i\in 1\wedge m}\frac{\hat{d}_i^2+\check{d}_i^2+1}{2\bar{d}_i},\quad i=1,2,...,m,\\
     	&b_3=\sum\limits_{i=1}^{m}\frac{||\{s_k^{(\alpha_i)}\}_{k\geq 0}||_{\mathcal{D}_{\alpha_i}}\hat{d}_i^2|d_i|^2_{\mathcal{L}_i(\Omega)}(r_i-l_i)^{2+\alpha_i}}{\check{d}_i^2(2-\alpha_i)}.
     \end{align*}
     Then,  GMRES solver for the preconditioned system \eqref{mdprecsys} has a linear convergence rate independent of step-sizes, i.e., the residuals generated by GMRES solver satisfy
     \begin{equation*}
     	||{\bf r}_{k}||_2\leq \theta^{k}||\hat{\bf r}_0||_2, \quad k\geq 1,
     \end{equation*}
     where ${\bf r}_{k}={\bf P}^{-1}{\bf b}-{\bf P}^{-1}{\bf A}{\bf u}_k$ with ${\bf u}_k$ denoting the $k$-th GMRES iteration for $k\geq 1$; $\hat{\bf r}_0={\bf P}^{-\frac{1}{2}}{\bf b}-{\bf P}^{-\frac{1}{2}}{\bf A}{\bf u}_0$ with ${\bf u}_0\in\mathbb{R}^{J\times 1}$ being an arbitrary initial guess for \eqref{mdprecsys};
     \begin{equation*}
     	\theta=\sqrt{1-\frac{c_1^2}{3c_1c_2+9c_3^2}}\in(0,1).
     \end{equation*}
     \end{theorem}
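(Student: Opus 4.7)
The plan is to mirror the argument used for Theorem \ref{finalthm}, replacing the two Kronecker factors by a sum over the $m$ coordinate directions. First, I would introduce the multi-dimensional intermediate preconditioner
\[
\tilde{\mathbf{P}}:=\mathbf{I}_J+\sum_{i=1}^{m}\eta_i\tilde{\mathbf{A}}_i,\qquad \tilde{\mathbf{A}}_i=\bar{d}_i\,\mathbf{I}_{M_i^{-}}\otimes\mathbf{S}_{\alpha_i,M_i}\otimes\mathbf{I}_{M_i^{+}},
\]
and observe, using Lemma \ref{taumatprecspectralm} on each tensor factor together with the positivity of the identity factors, that $\frac{1}{2}\mathbf{P}\prec\tilde{\mathbf{P}}\prec\frac{3}{2}\mathbf{P}$, exactly as in \eqref{ptildpcontorleq}. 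Consequently, the Rayleigh-quotient reductions \eqref{skewpartcontrl1}--\eqref{hpartlbcontrol1} remain valid with $J$ and the new $\tilde{\mathbf{P}}$; in particular it suffices to control $\mathbf{z}^{\rm T}\mathcal{H}(\mathbf{A})\mathbf{z}/\mathbf{z}^{\rm T}\tilde{\mathbf{P}}\mathbf{z}$ from above and below and $|\mathbf{z}^{*}\mathcal{S}(\mathbf{A})\mathbf{z}|/\mathbf{z}^{*}\tilde{\mathbf{P}}\mathbf{z}$ from above by constants independent of step-sizes.

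Next I would extend Lemma \ref{2dhpartcontrollm} to $m$ dimensions. For each $i$, write $\mathbf{A}_i=\mathbf{D}_i(\mathbf{I}_{M_i^{-}}\otimes\mathbf{S}_{\alpha_i,M_i}\otimes\mathbf{I}_{M_i^{+}})$ and, via the permutation that cycles $x_i$ into the innermost index (the multi-dimensional analogue of $\mathbf{P}_{x\leftrightarrow y}$), reduce the analysis of $\mathbf{A}_i+\mathbf{A}_i^{\rm T}$ to a block-diagonal situation where each diagonal block has the form $\mathbf{Z}\mathbf{S}_{\alpha_i,M_i}+\mathbf{S}_{\alpha_i,M_i}\mathbf{Z}$ with $\mathbf{Z}$ a diagonal matrix whose entries come from sampling $d_i$ along a one-dimensional slice $\mathcal{S}_i(\mathbf{x})$. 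Since $d_i\in\mathcal{L}_i(\Omega)$, the condition $\nabla(\mathbf{Z})\leq |d_i|_{\mathcal{L}_i(\Omega)}(r_i-l_i)/(M_i+1)$ is met, so Lemma \ref{hpartcontrollm} applies uniformly over slices and yields, after undoing the permutation,
\[
2\theta_{1,i}\check{d}_i(\mathbf{I}_{M_i^{-}}\otimes\mathbf{S}_{\alpha_i,M_i}\otimes\mathbf{I}_{M_i^{+}})-s_{i,1}(\theta_{1,i})h_i^{\alpha_i}\mathbf{I}_J \prec \mathbf{A}_i+\mathbf{A}_i^{\rm T},
\]
and the analogous upper bound with $\theta_{3,i}\in(0,\infty)$, where the constants $s_{i,1},s_{i,2}$ have the same shape as in Lemma \ref{2dhpartcontrollm} with $(\alpha,d,r_1-l_1)$ replaced by $(\alpha_i,d_i,r_i-l_i)$. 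Summing over $i$ after multiplying by $\eta_i/2$, choosing $\theta_{1,i}=1/\sqrt{2}$ for the lower bound and $\theta_{3,i}=(\sqrt{2}-1)\hat{d}_i$ for the upper bound, and using $\Delta t\leq(1-c_1)/b_1$ and $\Delta t\leq(c_2-1)/b_2$ to absorb the identity remainders into $c_1\mathbf{I}_J$ and $c_2\mathbf{I}_J$, produces $c_1\tilde{\mathbf{P}}\preceq\mathcal{H}(\mathbf{A})\preceq c_2\tilde{\mathbf{P}}$, the multi-dimensional analogue of Lemma \ref{prehpartminmaxlm}.

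For the skew-symmetric part I would similarly extend Lemma \ref{2dspartcontrollm}. Again using the permutation into the innermost index, $\eta_i|\mathbf{x}^{*}\mathcal{S}(\mathbf{A}_i)\mathbf{x}|$ becomes a sum of one-dimensional quantities of the form $|\mathbf{y}^{*}(\mathbf{Z}\mathbf{S}_{\alpha_i,M_i}-\mathbf{S}_{\alpha_i,M_i}\mathbf{Z})\mathbf{y}|$, to which Lemma \ref{spartcontrollm} applies directly with $(\hat{b},\check{b},\tilde{b})=(\hat{d}_i,\check{d}_i,|d_i|_{\mathcal{L}_i(\Omega)}(r_i-l_i))$. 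Summing over $i$, bounding $\nu_1(\hat{d}_i,\check{d}_i)/2\leq c_3\bar{d}_i$ by the choice of $c_3$, and using $\Delta t\leq c_3/b_3$ to absorb the remainder terms $\tfrac{\nu_2(\cdots)\Delta t}{2(r_i-l_i)^{-\alpha_i}}\mathbf{x}^{*}\mathbf{x}$ into $c_3\mathbf{x}^{*}\mathbf{x}$, yields $|\mathbf{x}^{*}\mathcal{S}(\mathbf{A})\mathbf{x}|\leq c_3\mathbf{x}^{*}\tilde{\mathbf{P}}\mathbf{x}$.

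With these three estimates in hand, the proof finishes exactly as Theorem \ref{finalthm}: inequalities \eqref{skewpartcontrl1}--\eqref{hpartlbcontrol1} give $\lambda_{\min}(\mathcal{H}(\mathbf{P}^{-1/2}\mathbf{A}\mathbf{P}^{-1/2}))\geq c_1/2$, $\lambda_{\max}(\mathcal{H}(\mathbf{P}^{-1/2}\mathbf{A}\mathbf{P}^{-1/2}))\leq 3c_2/2$ and $\rho(\mathcal{S}(\mathbf{P}^{-1/2}\mathbf{A}\mathbf{P}^{-1/2}))\leq 3c_3/2$; Lemma \ref{gmrescvglm} applied to the auxiliary symmetrically preconditioned system \eqref{auxsystem} gives a per-iteration reduction factor equal to $\theta=\sqrt{1-c_1^2/(3c_1c_2+9c_3^2)}\in(0,1)$; and Lemma \ref{residual} transfers this bound back to the actual preconditioned system \eqref{mdprecsys}. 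The main obstacle is purely bookkeeping rather than analytical: carefully tracking the permutation that brings the $i$-th tensor factor into the innermost slot so that Lemmas \ref{hpartcontrollm} and \ref{spartcontrollm} can be invoked uniformly across slices, and verifying that summing the per-direction identity-remainders still fits inside the $c_1,c_2,c_3$ margins under the single step-size condition $\Delta t\leq c_*$.
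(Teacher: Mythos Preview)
Your proposal is correct and follows precisely the route indicated by the paper, which does not write out a separate proof but simply states that Theorem \ref{mdfinalthm} is proved ``analogous to the proof of Theorem \ref{finalthm}.'' Your outline is in fact more explicit than the paper itself: the multi-dimensional $\tilde{\mathbf{P}}$, the per-direction permutation reducing each $\mathbf{A}_i+\mathbf{A}_i^{\rm T}$ and $\mathcal{S}(\mathbf{A}_i)$ to block-diagonal one-dimensional pieces to which Lemmas \ref{hpartcontrollm} and \ref{spartcontrollm} apply, the choices $\theta_{1,i}=1/\sqrt{2}$ and $\theta_{3,i}=(\sqrt{2}-1)\hat d_i$, and the final appeal to Lemmas \ref{gmrescvglm} and \ref{residual} are exactly the intended straightforward extension.
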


    \begin{rem}
    Clearly, the theoretical results presented in Theorem \ref{mdfinalthm} for multi-dimension SFDE  are straightforward extension of that presented in Theorem \ref{finalthm} for 2-dimension SFDE. That demonstrates the power of the proposed preconditioner and the developed theoretical framework in this paper. 
    \end{rem}
    
    \section{Numerical Results}

In this section, we test the proposed preconditioned solver by 2D, 3D examples  and compare its performance with other preconditioning techniques including: the (multilevel) circulant preconditioner proposed in \cite{leisun2013}, the (multilevel) Toeplitz preconditioner proposed in \cite{lin2017splitting}. For ease of statement, we denote by GMRES-${\bf P}$,  GMRES-T and GMRES-C, the preconditioned GMRES methods with the proposed preconditioner, Toeplitz preconditioner and circulant preconditioner, respectively.

    For all preconditioners tested in this section, GMRES solver is applied to solving the preconditioned systems. We set $||{\bf r}_k||_2\leq$1e-7$\times ||{\bf r}_0||_2$  as stopping criterion of GMRES solver, where ${\bf r}_k$ denotes residual vector at $k$-th GMRES iteration and ${\bf r}_0$ denotes the initial residual vector. As we solving the $N$ many unknowns ${\bf u}^n$ ($n=1,2,...,N$) in \eqref{toeplitz-likesystem} sequentially, we take ${\bf u}^0$ as initial guess for solving ${\bf u}^1$ and take ${\bf u}^n$ as initial guess for solving ${\bf u}^{n+1}$ with $n\in 1\wedge (N-1)$ once ${\bf u}^n$ is solved.
    
    As verified in the Appendices A-C, there are three numerical schemes \cite{ccelik2012crank,meerschaert2006finite,sousaelic} appropriate as numerical discretization in this paper. All the numerical results presented in this section are based on the numerical scheme proposed in \cite{ccelik2012crank}, which is defined as follows (see \eqref{centraldiffwkdef})
    \begin{equation*}
    	s_0^{(\gamma)}=\frac{\Gamma(\gamma+1)}{\Gamma(\gamma/2+1)^2},\quad s_{k+1}^{(\gamma)}=\left(1-\frac{\gamma+1}{\gamma/2+k+1}\right)s_{k}^{(\gamma)},\quad k\geq 0,\quad  \gamma\in(1,2).
    \end{equation*}
    
    Numerical results corresponding to the other two schemes are quite similar to the presented one, which are thus neglected considering the limited length of this paper. There are $N$ many linear systems to be solved whose unknowns corresponding to ${\bf u}^{n}$ for $n=1,2,...,N$, respectively. Hence, there are $N$ many iteration numbers for each iterative solver tested. We use the notation  ``iter'' to represent the average of the $N$ many iteration numbers. Denote by `CPU’, the running time of an algorithm. The examples of fractional diffusion equations tested in this section are defined in square physical domains. Each of these square domains is discretized by square grid with $M+1$ uniform partitions along each spatial direction. The time interval $[0,T]$ is always discretized in $N$ many uniform partitions for each example.  As iterative solvers may produce different iterative solutions of the fractional diffusion equations, we use the following norm to measure the accuracy of the iteration solutions
    \begin{equation*}
    	\mathrm{E}_{M,N}=\frac{||{\bf u}_*-\tilde{\bf u}||_{\infty}}{||{\bf u}||_{\infty}},
    \end{equation*}
    where  ${\bf u}_*$ and $\tilde{\bf u}$ denote the exact solution of SFDE and iterative solution deriving from some iterative solver, respectively. As $	\mathrm{E}_{M,N}$ corresponding to different iterative solvers are roughly the same, the results of  $	\mathrm{E}_{M,N}$ are not listed in this paper. In such situation, ``iter'' and ``CPU" are more interesting quantities for measuring the performance of preconditioning techniques.
    
    
    In Example 1, we consider the two-dimension SFDE \eqref{rsdiffusioneq}--\eqref{initialcondition} with
    \begin{align*}
    &\Omega=(0,2)\times(0,2),\quad T=1,\quad d(x,y)=1+x^{\alpha}+(2-x)^{\alpha}+y^{\beta}+(2-y)^{\beta},\\
    &e(x,y)=2+\cos(\pi x/5)+\cos(\pi y/5),\quad \psi(x,y)=x^2(2-x)^2y^2(2-y)^2,\\
    &f(x,y,t)=-\exp(-t)x^2(2-x)^2y^2(2-y)^2\notag\\ &\qquad\qquad\quad+\frac{\exp(-t)d(x,y)y^2(2-y)^2}{2\cos(\alpha\pi/2)\Gamma(2-\alpha)}\sum\limits_{i=2}^{4}\frac{\binom{2}{i-2}2^{4-i}i![x^{i-\alpha}+(2-x)^{i-\alpha}]}{\Gamma(i+1-\alpha)(-1)^{i-2}}\notag\\
    &\qquad\qquad\quad +\frac{\exp(-t)e(x,y)x^2(2-x)^2}{2\cos(\beta\pi/2)\Gamma(2-\beta)}\sum\limits_{i=2}^{4}\frac{\binom{2}{i-2}2^{4-i}i![y^{i-\beta}+(2-y)^{i-\beta}]}{\Gamma(i+1-\beta)(-1)^{i-2}},
    \end{align*}
    the exact solution of which is given by $u(x,y,t)=\exp(-t)x^2(2-x)^2y^2(2-y)^2$. Results of different preconditioned GMRES methods for solving Example 1 
		are listed in Table \ref{expl2dtable}. Table \ref{expl2dtable} shows that the proposed preconditioning method GMRES-${\bf P}$ is the most efficient one in terms of CPU time and iteration number among the three solvers. Moreover, the iteration number of  GMRES-${\bf P}$ changes slightly as $N$ and $M$ changes, which illustrates the size-independent convergence of GMRES-${\bf P}$ as shown in Theorem \ref{finalthm}. 
    
    	\begin{table}[h]
    	\begin{center}
    		\caption{Performance of different preconditioners for solving Example 1.
				}\label{expl2dtable}
    		\setlength{\tabcolsep}{0.48em}
    		\begin{tabular}[c]{ccc|cc|cc|cc}
    			\hline
    			\multirow{2}{*}{$(\alpha,\beta)$} &\multirow{2}{*}{$N$}&\multirow{2}{*}{$M+1$}& \multicolumn{2}{c|}{GMRES-${\bf P}$} &\multicolumn{2}{c|}{GMRES-T}& \multicolumn{2}{c}{GMRES-C}  \\
    			\cline{4-9}
    			&&&$\mathrm{Iter}$&$\mathrm{CPU(s)}$&$\mathrm{Iter}$&$\mathrm{CPU(s)}$&$\mathrm{Iter}$&$\mathrm{CPU(s)}$\\
    			\hline
    			\multirow{9}{*}{(1.1,1.9)}&\multirow{3}{*}{$2^4$} &$2^{8}$   &6.0   &1.43    &7.0 &5.09     &26.4 &4.13  \\
    			                          &                       &$2^{9}$   &6.0   &5.15    &7.0 &21.83    &32.5 &22.15  \\
    			                          &                       &$2^{10}$  &6.0   &18.43   &7.0 &74.62    &39.7 &93.56 \\
    			                          \cline{2-9}
    			                          &\multirow{3}{*}{$2^5$} &$2^{8}$   &5.0   &2.36    &7.0 &9.19     &22.3 &7.98\\
    			                          &                       &$2^{9}$   &6.0   &11.64   &7.0 &39.38    &27.3 &38.33  \\
    			                          &                       &$2^{10}$  &6.0   &45.27   &7.0 &141.99   &33.4 &147.62 \\
    			                          \cline{2-9}
    			                          &\multirow{3}{*}{$2^6$} &$2^{8}$   &5.0   &6.33    &6.0 &19.91    &18.1 &12.78\\
    			                          &                       &$2^{9}$   &5.0   &23.30   &6.0 &72.65    &22.2 &61.17  \\
    			                          &                       &$2^{10}$  &5.0   &79.50   &6.0 &262.91   &28.3 &255.67 \\
    			\hline
    			\multirow{9}{*}{(1.5,1.9)}&\multirow{3}{*}{$2^4$} &$2^{8}$   &6.0   &1.73    &9.0 &6.57     &21.3 &3.74  \\
    			                          &                       &$2^{9}$   &6.0   &6.58    &9.0 &27.34    &25.4 &17.72  \\
    			                          &                       &$2^{10}$  &6.0   &22.34   &9.0 &93.19    &28.7 &64.18 \\
    			                          \cline{2-9}
    			                          &\multirow{3}{*}{$2^5$} &$2^{8}$   &5.0   &3.28    &8.0 &11.82    &19.2 &6.86\\
    			                          &                       &$2^{9}$   &5.0   &11.73   &8.0 &48.36    &22.3 &30.69  \\
    			                          &                       &$2^{10}$  &5.0   &39.86   &8.0 &165.17   &26.3 &119.20 \\
    			                          \cline{2-9}
    			                          &\multirow{3}{*}{$2^6$} &$2^{8}$   &5.0   &6.52    &7.0 &20.21    &15.1 &10.69\\
    			                          &                       &$2^{9}$   &5.0   &23.88   &7.0 &82.62    &18.2 &48.45  \\
    			                          &                       &$2^{10}$  &5.0   &81.11   &7.0 &287.42   &22.2 &195.20 \\
    		    \hline
    		    \multirow{9}{*}{(1.9,1.9)}&\multirow{3}{*}{$2^4$} &$2^{8}$   &5.0   &1.54    &10.0&7.02     &22.4&4.04  \\
    		                              &                       &$2^{9}$   &5.0   &6.11    &10.0&29.37    &26.6&18.46  \\
    		                              &                       &$2^{10}$  &5.0   &20.36   &10.0&101.59   &31.4&72.21 \\
    		                              \cline{2-9}
    		                              &\multirow{3}{*}{$2^5$} &$2^{8}$   &5.0   &3.12    &9.0 &12.78    &21.2&7.65\\
    		                              &                       &$2^{9}$   &5.0   &11.94   &9.0 &52.93    &24.2&33.32  \\
    		                              &                       &$2^{10}$  &5.0   &40.53   &9.0 &183.57   &27.3&123.58 \\
    		                              \cline{2-9}
    		                              &\multirow{3}{*}{$2^6$} &$2^{8}$   &4.0   &5.45    &8.0 &22.86    &17.1&13.03\\
    		                              &                       &$2^{9}$   &4.0   &20.50   &8.0 &94.02    &20.2&58.26  \\
    		                              &                       &$2^{10}$  &4.0   &68.82   &8.0 &325.41   &24.8&229.93 \\
    		   \hline
    		\end{tabular}
    	\end{center}
    \end{table}
  
    
   
    	In Example 2,
			we consider the multi-dimension SFDE \eqref{mdrsdiffusioneq}--\eqref{mdinitialcondition} with 
    	\begin{align*}
    	&m=3,\quad \Omega=(0,1)^3,\quad T=1,\quad d_1({\bf x})=\sum\limits_{i=1}^3x_i^{\alpha_i}(1-x_i)^{\alpha_i},\\
    	&d_2({\bf x})=2+\sum\limits_{i=1}^{3}\cos(\pi x_i/2),\quad d_3({\bf x})=1+x_1x_2x_3,\quad\psi({\bf x})=\prod_{i=1}^{3}x_i^2(1-x_i)^2,\\
    	&f({\bf x},t)=-\exp(-t)\prod_{i=1}^{3}x_i^2(1-x_i)^2\\
    	&\qquad\qquad+\exp(-t)\sum\limits_{i=1}^{3}\frac{d_i({\bf x})x_i^2(1-x_i)^2}{2\cos(\alpha_i\pi/2)\Gamma(2-\alpha_i)}\sum\limits_{j=2}^{4}\frac{\binom{2}{j-2}j![x_i^{j-\alpha_i}+(1-x_i)^{j-\alpha_i}]}{\Gamma(j+1-\alpha_i)(-1)^{j-2}},
    	\end{align*}
    	the exact solution of which is given by
    	\begin{equation*}
    	u({\bf x},t)=\exp(-t)\prod_{i=1}^{3}x_i^2(1-x_i)^2.
    	\end{equation*}
    	In \cite{lin2017splitting}, there is no implementation of GMRES-T discussed for 3-dimension SFDE. Hence, for Example 2,
			we test and compare the performance of GMRES-${\bf P}$ and GMRES-C, the results of which are listed in Table \ref{expl3dtable}. Table \ref{example3d} shows that the proposed preconditioning method GMRES-${\bf P}$ is more efficient than GMRES-C in terms of both iteration number and computational time. Moreover, the iteration number of GMRES-${\bf P}$ shown in Table \ref{example3d} changes slightly as matrix-size change, which demonstrates the size-independent convergence rate, supporting the theoretical results presented in Theorem \ref{mdfinalthm}.
    	
    		\begin{table}[H]
    		\begin{center}
    			\caption{Performance of different preconditioners for solving Example \ref{example2d}.}\label{expl3dtable}
    			\setlength{\tabcolsep}{1.0em}
    			\begin{tabular}[c]{ccc|cc|cc}
    				\hline
    				\multirow{2}{*}{$(\alpha_1,\alpha_2,\alpha_3)$} &\multirow{2}{*}{$N$}&\multirow{2}{*}{$M+1$}& \multicolumn{2}{c|}{GMRES-${\bf P}$} & \multicolumn{2}{c}{GMRES-C}  \\
    				\cline{4-7}
    				&&&$\mathrm{Iter}$&$\mathrm{CPU(s)}$&$\mathrm{Iter}$&$\mathrm{CPU(s)}$\\
    				\hline
    				\multirow{9}{*}{(1.1,1.9,1.5)}&\multirow{3}{*}{$2^1$} &$2^{6}$   &8.0   &1.33    &36.0 &3.36      \\
    				&                       &$2^{7}$   &9.0   &7.79    &47.0 &74.08     \\
    				&                       &$2^{8}$   &9.0   &75.60     &60.0 &938.62   \\
    				\cline{2-7}
    				&\multirow{3}{*}{$2^2$} &$2^{6}$   &8.0   &2.54     &34.0 &14.12     \\
    				&                       &$2^{7}$   &8.0   &18.49    &43.0 &139.89     \\
    				&                       &$2^{8}$   &8.0   &148.72     &55.0 &1658.30    \\
    				\cline{2-7}
    				&\multirow{3}{*}{$2^3$} &$2^{6}$   &7.0   &4.65    &31.0 &24.95    \\
    				&                       &$2^{7}$   &8.0   &36.12    &40.0 &250.30      \\
    				&                       &$2^{8}$   &8.0   &285.95     &51.0 &2725.38   \\
    				\hline
    				\multirow{9}{*}{(1.5,1.1,1.9)}&\multirow{3}{*}{$2^1$} &$2^{6}$   &8.0   &1.31    &26.0 &5.53     \\
    				&                       &$2^{7}$   &8.0   &9.03    &32.0 &48.11     \\
    				&                       &$2^{8}$   &9.0   &78.98     &40.0 &474.40    \\
    				\cline{2-7}
    				&\multirow{3}{*}{$2^2$} &$2^{6}$   &8.0   &2.60    &24.0 &9.50 \\
    				&                       &$2^{7}$   &8.0   &18.20    &30.0 &89.70     \\
    				&                       &$2^{8}$   &8.0   &144.51     &38.0 &878.89  \\
    				\cline{2-7}
    				&\multirow{3}{*}{$2^3$} &$2^{6}$   &7.0   &4.72    &24.0 &17.72   \\
    				&                       &$2^{7}$   &7.0   &32.89    &32.0 &170.46     \\
    				&                       &$2^{8}$   &8.0   &287.44     &38.0 &1416.19   \\
    				\hline
    		  \multirow{9}{*}{(1.9,1.5,1.1)}&\multirow{3}{*}{$2^1$} &$2^{6}$   &8.0   &1.32    &22.0&17.73     \\
    				&                       &$2^{7}$   &9.0   &10.03    &28.0&164.42     \\
    				&                       &$2^{8}$   &9.0   &79.66     &35.0&1479.31   \\
    				\cline{2-7}
    				&\multirow{3}{*}{$2^2$} &$2^{6}$   &8.0   &2.68    &34.0 &5.71   \\
    				&                       &$2^{7}$   &8.0   &18.14    &43.0 &55.01     \\
    				&                       &$2^{8}$   &8.0   &144.75     &56.0 &779.50   \\
    				\cline{2-7}
    				&\multirow{3}{*}{$2^3$} &$2^{6}$   &7.0   &4.83    &31.0 &12.77   \\
    				&                       &$2^{7}$   &8.0   &36.25    &40.0 &126.98      \\
    				&                       &$2^{8}$   &8.0   &281.32     &51.0 &1386.90  \\
    				\hline
    		  \multirow{9}{*}{(1.1,1.5,1.9)}&\multirow{3}{*}{$2^1$} &$2^{6}$   &9.0   &1.43    &29.0&23.71     \\
    				&                       &$2^{7}$   &9.0   &9.93    &36.0&226.11     \\
    				&                       &$2^{8}$   &9.0   &77.03     &46.9&2008.42   \\
    				\cline{2-7}
    				&\multirow{3}{*}{$2^2$} &$2^{6}$   &8.0   &2.52    &27.0 &4.37   \\
    				&                       &$2^{7}$   &8.0   &17.76    &34.0 &36.90     \\
    				&                       &$2^{8}$   &8.0   &141.15     &43.0 &546.48    \\
    				\cline{2-7}
    				&\multirow{3}{*}{$2^3$} &$2^{6}$   &8.0   &5.28    &23.0 &13.78    \\
    				&                       &$2^{7}$   &8.0   &36.20    &29.0 &139.29    \\
    				&                       &$2^{8}$   &8.0   &286.43     &36.0 &1732.65    \\
    				\hline
    			\end{tabular}
    		\end{center}
    	\end{table}
    
    
    \section{Conclusion}
    In this paper, a $\tau$-preconditioner has been proposed for (multilevel) Toeplitz-like linear systems arising from unsteady-state (multi-dimension) SFDE with variable coefficients, the inversion of which can be fast implemented by FSTs. Theoretically, we have shown that with the proposed preconditioner, preconditioned GMRES solver has a convergence rate independent of system-size under mild assumption that the variable coefficients are partially Lipschitz continuous functions. To the best of our knowledge, this is the first iterative solver with size-independent convergence rate for the Toeplitz-like system arising from the variable-coefficients SFDE. Numerical results reported have demonstrated the efficiency of the proposed preconditioning method and supported the theoretical results.
	
	\bibliographystyle{siam}
	\bibliography{myreferences}
	
	\vspace{3mm}
	\begin{appendix}
			\section{Verification of $\{s_k^{(\gamma)}\}_{k\geq 0}$ arising from \cite{ccelik2012crank}}\label{schm1verifysec}
		$\{s_k^{(\gamma)}\}_{k\geq 0}$ arising from \cite{ccelik2012crank} is defined by
		\begin{equation}\label{centraldiffwkdef}
			s_0^{(\gamma)}=\frac{\Gamma(\gamma+1)}{\Gamma(\gamma/2+1)^2},\quad s_{k+1}^{(\gamma)}=\left(1-\frac{\gamma+1}{\gamma/2+k+1}\right)s_{k}^{(\gamma)},\quad k\geq 0.
		\end{equation}
		As $\Gamma(z)>0$ for $z>0$ and $\gamma\in(1,2)$, it is clear that $s_0^{(\gamma)}=\frac{\Gamma(\gamma+1)}{\Gamma(\gamma/2+1)^2}>0$ and that
		\begin{equation*}
			s_{1}^{(\gamma)}=\left(1-\frac{\gamma+1}{\gamma/2+1}\right)s_{0}^{(\gamma)}=\left(\frac{-\gamma}{\gamma+2}\right)s_0^{(\gamma)}<0.
		\end{equation*}
		Notice that $1-\frac{\gamma+1}{\gamma/2+k+1}=\frac{2k-\gamma}{2k+\gamma+2}>0$ for $k\geq 1$. Therefore, it is trivial to see by induction that
		\begin{equation*}
			s_{k+1}^{(\gamma)}=\left(1-\frac{\gamma+1}{\gamma/2+k+1}\right)s_{k}^{(\gamma)}<0,\quad k\geq 1.
		\end{equation*}
		Hence, Property \ref{skprop}${\bf (ii)}$ is valid. Moreover, $1-\frac{\gamma+1}{\gamma/2+k+1}=\frac{2k-\gamma}{2k+\gamma+2}\in(0,1)$ for $k\geq 1$, 
		\begin{equation*}
			|s_{k+1}^{(\gamma)}|=\left|\left(1-\frac{\gamma+1}{\gamma/2+k+1}\right)s_{k}^{(\gamma)}\right|=\left|\frac{2k-\gamma}{2k+\gamma+2}\right||s_{k}^{(\gamma)}|<|s_{k}^{(\gamma)}|, \quad k\geq 1,
		\end{equation*}
		which combined with $s_{k}^{(\gamma)}<0$ for $k\geq 1$ implies that $s_{k}^{(\gamma)}<s_{k+1}^{(\gamma)}$ for $k\geq 1$. In other words, Property \ref{skprop}${\bf (iv)}$ is valid. 
		
		It is indicated in \cite{ccelik2012crank} that $2\sum\limits_{k=1}^{\infty}|s_k^{(\gamma)}|=s_0^{(\gamma)}$, which together with $s_k^{(\gamma)}<0$ for $k\geq 1$ implies that
		\begin{align*}
			s_0^{(\gamma)}+2\sum\limits_{k=1}^{m-1}s_k^{(\gamma)}=-2\sum\limits_{k=1}^{\infty}s_k^{(\gamma)}+2\sum\limits_{k=1}^{m-1}s_k^{(\gamma)}=-2\sum\limits_{k=m}^{\infty}s_k^{(\gamma)}=2\sum\limits_{k=m}^{\infty}|s_k^{(\gamma)}|,\quad m\geq 1
		\end{align*}
		It is also shown in \cite{ccelik2012crank} that $|s_k^{(\gamma)}|=\mathcal{O}((k+1)^{-\gamma-1})$, which means
		\begin{align*}
			s_0^{(\gamma)}+2\sum\limits_{k=1}^{m-1}s_k^{(\gamma)}&=2\sum\limits_{k=m}^{\infty}|s_k^{(\gamma)}|\\
			&\gtrsim \sum\limits_{k=m}^{\infty}\frac{1}{(k+1)^{\gamma+1}}\geq \int_{m}^{\infty}\frac{1}{(1+x)^{1+\gamma}}dx=\frac{1}{\gamma(1+m)^{\gamma}},\quad m\geq 1.
		\end{align*}
		Hence,
		\begin{equation*}
			(m+1)^{\gamma}\left(s_0^{(\gamma)}+2\sum\limits_{k=1}^{m-1}s_k^{(\gamma)}\right)\gtrsim \frac{1}{\gamma}>0,\quad m\geq 1.
		\end{equation*}
		Therefore,
		\begin{equation*}
			\inf\limits_{m\geq 1}(m+1)^{\gamma}\left(s_0^{(\gamma)}+2\sum\limits_{k=1}^{m-1}s_k^{(\gamma)}\right)>0,
		\end{equation*}
		which means Property \ref{skprop}${\bf (iii)}$ is valid.
		
		\section{Verification of $\{s_k^{(\gamma)}\}_{k=0}^{\infty}$ arising from \cite{meerschaert2006finite}}\label{schm2verifysec}
		$\{s_k^{(\gamma)}\}_{k=0}^{\infty}$ arising from \cite{meerschaert2004finite} is defined by 
		\begin{align}
			&s_k^{(\gamma)}=q_{\gamma}\tilde{w}_k^{(\gamma)},\quad k\geq 0,\qquad q_{\gamma}=\frac{-1}{2\cos(\gamma\pi/2)}>0,\label{sftgrwldwk}\\
			&\tilde{w}_k^{(\gamma)}=2g_1^{(\gamma)},\quad \tilde{w}_1^{(\gamma)}=g_0^{(\gamma)}+g_2^{(\gamma)},\quad \tilde{w}_k^{(\gamma)}=g_{k+1}^{(\gamma)},~k\geq 2,\notag\\
			&g_0^{(\gamma)}=-1,\quad g_{k+1}^{(\gamma)}=\left(1-\frac{\gamma+1}{k+1}\right)g_k^{(\gamma)},~k\geq 0.\notag
		\end{align}
		Property \ref{skprop}${\bf (ii)}$, ${\bf (iv)}$ of $\{s_k^{(\gamma)}\}_{k=0}^{\infty}$ defined in \eqref{sftgrwldwk} has been verified in  \cite[Lemma 4.1]{huangxin2022}. 
		
		It thus remains to verify Property \ref{skprop}${\bf (iii)}$. By \cite[Lemma 8]{linstbcvg2017}, it holds
		\begin{equation}\label{gkprop}
			\sum\limits_{k=0}^{\infty}g_k^{(\gamma)}=0.
		\end{equation}
		Therefore,
		\begin{align*}
			s_0^{(\gamma)}+2\sum\limits_{k=1}^{\infty}s_k^{(\gamma)}&=q_{\gamma}\left(\tilde{w}_0^{(\gamma)}+2\sum\limits_{k=1}^{\infty}\tilde{w}_k^{(\gamma)}\right)\\
			&=q_{\gamma}\left[2g_1^{(\gamma)}+2(g_0^{(\gamma)}+g_2^{(\gamma)})+2\sum\limits_{k=2}^{\infty}g_{k+1}^{(\gamma)}\right]\\
			&=2q_{\gamma}\sum\limits_{k=0}^{\infty}g_k^{(\gamma)}=0,
		\end{align*}
		which together with Property \ref{skprop}${\bf (ii)}$ implies that
		\begin{align*}
			s_0^{(\gamma)}+2\sum\limits_{k=1}^{m-1}s_k^{(\gamma)}=-2\sum\limits_{k=1}^{\infty}s_k^{(\gamma)}+2\sum\limits_{k=1}^{m-1}s_k^{(\gamma)}=-2\sum\limits_{m}^{\infty}s_k^{(\gamma)}=2\sum\limits_{k=m}^{\infty}|s_k^{(\gamma)}|
		\end{align*}
		It is shown in \cite{meerschaert2004finite} that $g_k^{(\gamma)}=\mathcal{O}((k+1)^{-\gamma-1})$ for $k\geq 0$,  Therefore, $|s_k^{(\gamma)}|=\mathcal{O}((k+1)^{-\gamma-1})$, which means
		\begin{align*}
			s_0^{(\gamma)}+2\sum\limits_{k=1}^{m-1}s_k^{(\gamma)}&=2\sum\limits_{k=m}^{\infty}|s_k^{(\gamma)}|\\
			&\gtrsim \sum\limits_{k=m}^{\infty}\frac{1}{(k+1)^{\gamma+1}}\geq \int_{m}^{\infty}\frac{1}{(1+x)^{1+\gamma}}dx=\frac{1}{\gamma(1+m)^{\gamma}},\quad m\geq 1.
		\end{align*}
		Hence,
		\begin{equation*}
			(m+1)^{\gamma}\left(s_0^{(\gamma)}+2\sum\limits_{k=1}^{m-1}s_k^{(\gamma)}\right)\gtrsim \frac{1}{\gamma}>0,\quad m\geq 1.
		\end{equation*}
		Therefore,
		\begin{equation*}
			\inf\limits_{m\geq 1}(m+1)^{\gamma}\left(s_0^{(\gamma)}+2\sum\limits_{k=1}^{m-1}s_k^{(\gamma)}\right)>0.
		\end{equation*}
		Thus, Property \ref{skprop}${\bf (iii)}$ of $\{s_k^{(\gamma)}\}_{k=0}^{\infty}$ defined in \eqref{sftgrwldwk}  is valid.
		
		\section{Verification of $\{s_k^{(\gamma)}\}_{k=0}^{\infty}$ arising from \cite{sousaelic}}\label{schm3verifysec}
		$\{s_k^{(\gamma)}\}_{k=0}^{\infty}$ arising from \cite{sousaelic} is defined by 
		\begin{align}
			&s_k^{(\gamma)}=\nu_{\gamma}\hat{w}_k^{(\gamma)},\quad k\geq 0,\qquad \nu_{\gamma}=\frac{-1}{2\cos(\gamma\pi/2)\Gamma(4-\gamma)}>0,\label{wghtfdwk}\\
			&\hat{w}_0^{(\gamma)}=2p_1^{(\gamma)},\quad \hat{w}_1^{(\gamma)}=p_0^{(\gamma)}+p_2^{(\gamma)},\quad \hat{w}_k^{(\gamma)}=p_{k+1}^{(\gamma)},~k\geq 2,\notag\\
			&p_0^{(\gamma)}=-1,\quad p_{1}^{(\gamma)}=4-2^{3-\gamma},\quad p_2^{(\gamma)}=-3^{3-\gamma}+4\times2^{3-\gamma}-6,\notag\\
			&p_k^{(\gamma)}=-(k+1)^{3-\gamma}+4k^{3-\gamma}-6(k-1)^{3-\gamma}+4(k-2)^{3-\gamma}-(k-3)^{3-\gamma},\quad k\geq 3.\notag
		\end{align}
		It is easy to see that $s_0^{(\gamma)}=\nu_{\gamma}\hat{w}_0^{(\gamma)}=2\nu_{\gamma}p_1^{(\gamma)}>0$ and that
		\begin{equation*}
			s_1^{(\gamma)}=\nu_{\gamma}(p_0^{(\gamma)}+p_2^{(\gamma)})=\nu_{\gamma}(-3^{3-\gamma}+4\times2^{3-\gamma}-7)<0,\quad \gamma\in(1,2).
		\end{equation*} 
		Moreover, it is shown in \cite[Lemma 4]{sousaelic} that $p_k^{(\gamma)}\leq 0$ for $k\geq 3$. Then, $s_k^{(\gamma)}=\nu_{\gamma}\hat{w}_k^{(\gamma)}=\nu_{\gamma}p_{k+1}^{(\gamma)}\leq 0$ for $k\geq 2$. So far, Property \ref{skprop}${\bf (ii)}$ of $\{s_k^{(\gamma)}\}_{k=0}^{\infty}$  defined in \eqref{wghtfdwk} is shown to be valid.
		
		Notice that
		\begin{align*}
			\hat{w}_1^{(\gamma)}-\hat{w}_2^{(\gamma)}&=p_0^{(\gamma)}+p_2^{(\gamma)}-p_3^{(\gamma)}\\
			&=4^{3-\gamma}-5\times 3^{3-\gamma}+10\times 2^{3-\gamma}-11\leq 0,\quad \gamma\in(1,2).
		\end{align*}
		In other words, $s_1^{(\gamma)}=\nu_{\gamma}\hat{w}_1^{(\gamma)}\leq \nu_{\gamma}\hat{w}_2^{(\gamma)}=s_2^{(\gamma)}$. Moreover, it is shown in \cite[Lemma  4]{sousaelic} that $p_k^{(\gamma)}\leq p_{k+1}^{(\gamma)}$ for $k\geq 3$. Thus, $s_k^{(\gamma)}=\nu_{\gamma}p_{k+1}^{(\gamma)}\leq\nu_{\gamma}p_{k+3}^{(\gamma)}=s_{k+1}^{(\gamma)}$ for $k\geq 2$, which means  Property \ref{skprop}${\bf (iv)}$ of $\{s_k^{(\gamma)}\}_{k=0}^{\infty}$  defined in \eqref{wghtfdwk} is valid.
		
		It thus remains to verify Property \ref{skprop}${\bf (iii)}$. By \cite[Lemma 4]{sousaelic}, it holds
		\begin{equation}\label{pkprop}
			\sum\limits_{k=0}^{\infty}p_k^{(\gamma)}=0.
		\end{equation}
		Therefore,
		\begin{align*}
			s_0^{(\gamma)}+2\sum\limits_{k=1}^{\infty}s_k^{(\gamma)}&=\nu_{\gamma}\left(\hat{w}_0^{(\gamma)}+2\sum\limits_{k=1}^{\infty}\hat{w}_k^{(\gamma)}\right)\\
			&=\nu_{\gamma}\left[2p_1^{(\gamma)}+2(p_0^{(\gamma)}+p_2^{(\gamma)})+2\sum\limits_{k=2}^{\infty}p_{k+1}^{(\gamma)}\right]\\
			&=2\nu_{\gamma}\sum\limits_{k=0}^{\infty}p_k^{(\gamma)}=0,
		\end{align*}
		which together with Property \ref{skprop}${\bf (ii)}$ implies that
		\begin{align*}
			s_0^{(\gamma)}+2\sum\limits_{k=1}^{m-1}s_k^{(\gamma)}=-2\sum\limits_{k=1}^{\infty}s_k^{(\gamma)}+2\sum\limits_{k=1}^{m-1}s_k^{(\gamma)}=-2\sum\limits_{m}^{\infty}s_k^{(\gamma)}=2\sum\limits_{k=m}^{\infty}|s_k^{(\gamma)}|
		\end{align*}
		It is shown in \cite{linstbcvg2017} that $p_k^{(\gamma)}=\mathcal{O}((k+1)^{-\gamma-1})$ for $k\geq 0$,  Therefore, $|s_k^{(\gamma)}|=\mathcal{O}((k+1)^{-\gamma-1})$, which means
		\begin{align*}
			s_0^{(\gamma)}+2\sum\limits_{k=1}^{m-1}s_k^{(\gamma)}&=2\sum\limits_{k=m}^{\infty}|s_k^{(\gamma)}|\\
			&\gtrsim \sum\limits_{k=m}^{\infty}\frac{1}{(k+1)^{\gamma+1}}\geq \int_{m}^{\infty}\frac{1}{(1+x)^{1+\gamma}}dx=\frac{1}{\gamma(1+m)^{\gamma}},\quad m\geq 1.
		\end{align*}
		Hence,
		\begin{equation*}
			(m+1)^{\gamma}\left(s_0^{(\gamma)}+2\sum\limits_{k=1}^{m-1}s_k^{(\gamma)}\right)\gtrsim \frac{1}{\gamma}>0,\quad m\geq 1.
		\end{equation*}
		Therefore,
		\begin{equation*}
			\inf\limits_{m\geq 1}(m+1)^{\gamma}\left(s_0^{(\gamma)}+2\sum\limits_{k=1}^{m-1}s_k^{(\gamma)}\right)>0.
		\end{equation*}
		Thus, Property \ref{skprop}${\bf (iii)}$ of $\{s_k^{(\gamma)}\}_{k=0}^{\infty}$ defined in \eqref{wghtfdwk}  is valid.

	\section{Proof of Lemma \ref{taumatprecspectralm}}\label{taumatpreclmproof}
	 \begin{proof}
		Denote ${\bf H}_{\gamma,m}:={\bf S}_{\gamma,m}-\tau({\bf S}_{\gamma,m})$. Rewrite ${\bf H}_{\gamma,m}$ as ${\bf H}_{\gamma,m}=[h_{ij}]_{i,j=1}^{m}$. Then, straightforward calculation yields that
		\begin{equation*}
			h_{ij}=\begin{cases}
				s_{i+j}^{(\gamma)},\quad i+j<m-1,\\
				s_{2m+2-(i+j)},\quad i+j> m+1,\\
				0,\quad  {\rm otherwise}.
			\end{cases}
		\end{equation*}
		By Property \ref{skprop}${\bf (ii)}$, we know that 
		\begin{equation}\label{hijneg}
			h_{ij}\leq 0,\quad 1\leq i,j\leq m.
		\end{equation}
		Denote $p_{ij}=s_{|i-j|}^{(\gamma)}-h_{ij}$.		Then, \eqref{hijneg} and Property \ref{skprop}${\bf (ii)}$,${\bf (iv)}$ imply that
		\begin{equation}\label{pijsign}
			p_{ij}=\begin{cases}
				s_{0}^{(\gamma)}-h_{ii}>0,\quad i=j,\\
				s_{|i-j|}^{(\gamma)}-s_{i+j}^{(\gamma)}\leq 0,\quad i+j<m-1{\rm~and~} i\neq j,\\
				s_{|i-j|}^{(\gamma)}-s_{2m+2-(i+j)}^{(\gamma)}\leq 0,\quad i+j> m+1{\rm~and~} i\neq j,\\
				s_{|i-j|}^{(\gamma)}\leq 0,\quad {\rm otherwise}.
			\end{cases}
		\end{equation}
		Let $(\lambda,{\bf z})$ be an eigen-pair of $\tau({\bf S}_{\gamma,m})^{-1}{\bf H}_{\gamma,m}$ such that $||{\bf z}||_{\infty}=1$. Then, 
		\begin{equation}\label{tauwinvweigpair}
			{\bf H}_{\gamma,m} {\bf z} = \lambda \tau({\bf S}_{\gamma,m}){\bf z}. 
		\end{equation}
		Rewrite ${\bf z}$ as ${\bf z}=(z_1,z_2,...,z_m)^{\rm T}$. 
		\eqref{tauwinvweigpair} implies that for each $i=1,2,...,m$, it holds
		\begin{equation*}
			\sum_{j=1}^{m}h_{ij}z_j=\lambda \sum_{j=1}^{m}p_{ij}z_j.
		\end{equation*}
		Hence,
		$$\lambda p_{ii}z_i=\sum_{j=1}^{m}h_{ij}z_j-\lambda \sum_{j=1,i\neq j}^{m}p_{ij}z_j,\quad i=1,2,...,m.$$
		As $||{\bf z}||_{\infty}=1$, there exists  $k_0\in\{1,2,...,m\}$ such that $|z_{k_0}|=1$. Then,
		\begin{equation}\label{lambdapkkbd}
			|\lambda| |p_{kk}|\leq \sum_{j=1}^{m}|h_{kj}|+|\lambda| \sum_{j=1,k\neq j}^{m}|p_{kj}|.
		\end{equation}
		By \eqref{pijsign} and Property \ref{skprop}${\bf (iii)}$, we have
		\begin{align*}
			&|p_{kk}|-\sum_{j=1,j\neq k}^{m}|p_{kj}|-2\sum_{j=1}^{m}|h_{kj}| \\
			=&(s_0^{(\gamma)}-h_{kk})- \sum_{j=1,j\neq k}^{m}(h_{kj}-s_{|k-j|}^{(\gamma)})+2\sum_{j=1}^{m}h_{kj}\\
			=& s_0^{(\gamma)}+\sum_{j=1,j\neq k}^{m}s_{|k-j|}^{(\gamma)}+\sum_{j=1}^{m}h_{kj} \\
			=& s_0^{(\gamma)}+\left(\sum_{j=1}^{k-1}s_{j}^{(\gamma)}+\sum_{j=1}^{m-k}s_{j}^{(\gamma)}\right)+\left(\sum_{j=k+1}^{m-1}s_{j}^{(\gamma)}+\sum_{j=m-k+2}^{m-1}s_{j}^{(\gamma)}\right)\\
			\geq&s_0^{(\gamma)}+2\sum_{j=1}^{m-1}s_{j}^{(\gamma)}  > 0,
		\end{align*} 
		which combined with \eqref{lambdapkkbd} implies that
		$$|\lambda|\leq \frac{ \sum\limits_{j=1}^{m}|h_{kj}|}{|p_{kk}|-\sum\limits_{j=1,k\neq j}^{m}|p_{kj}|}\leq  \frac{ \sum\limits_{j=1}^{m}|h_{kj}|}{2\sum\limits_{j=1}^{m}|h_{kj}|} =\frac{1}{2}. $$
		Therefore,
		\begin{equation}\label{tauinvhspectr}
			\sigma(\tau({\bf S}_{\gamma,m})^{-1}{\bf H}_{\gamma,m})\subset(-1/2,1/2).
		\end{equation}
		
		Since	$	\tau({\bf S}_{\gamma,m})={\bf S}_{\gamma,m}-{\bf H}_{\gamma,m}$, $$\tau({\bf S}_{\gamma,m})^{-1}{\bf S}_{\gamma,m}=\tau({\bf S}_{\gamma,m})^{-1}(\tau({\bf S}_{\gamma,m})+{\bf H}_{\gamma,m})={\bf I}_m+\tau({\bf S}_{\gamma,m})^{-1}{\bf H}_{\gamma,m},$$
		which means $\sigma(\tau({\bf S}_{\gamma,m})^{-1}{\bf S}_{\gamma,m})=1+\sigma(\tau({\bf S}_{\gamma,m})^{-1}{\bf H}_{\gamma,m})\subset(1/2,3/2)$. By Lemma \ref{tausgammamhpdlm}, $\tau({\bf S}_{\gamma,m})\succ {\bf O}$. By matrix similarity, we have $\sigma(\tau({\bf S}_{\gamma,m})^{-\frac{1}{2}}{\bf S}_{\gamma,m}\tau({\bf S}_{\gamma,m})^{-\frac{1}{2}})=\sigma(\tau({\bf S}_{\gamma,m})^{-1}{\bf S}_{\gamma,m})\subset(1/2,3/2)$, which implies that
		\begin{equation*}
			{\bf O}\prec \frac{1}{2}\tau({\bf S}_{\gamma,m})\prec {\bf S}_{\gamma,m}\prec \frac{3}{2}\tau({\bf S}_{\gamma,m}).
		\end{equation*}
		The proof is complete.
	\end{proof}
	\section{Proof of Lemma \ref{resmatlemm}}\label{retmatlmproof}
	\begin{proof}
		Recall that $\{s_k^{(\gamma)}\}_{k\geq 0}\in\mathcal{D}_{\gamma},~ \forall \gamma\in(1,2)$.	It is easy to check the ($i,j)$-th entry $r_{ij}$ of $\Delta_{{\bf S}_{\gamma,M}}({\bf Z})$ is given by
		\begin{equation*}
			r_{ij}=\left(z_{i}s_{|i-j|}^{({\gamma})}+s_{|i-j|}^{({\gamma})}z_{j}-2z_{i}^{\frac{1}{2}}s_{|i-j|}^{({\gamma})}z_{j}^{\frac{1}{2}}\right)=\Big(z_{i}^{\frac{1}{2}}-z_{j}^{\frac{1}{2}}\Big)^2s_{|i-j|}^{({\gamma})}, \qquad 1\leq i,j\in 1\wedge M.
		\end{equation*}
		Using assumptions ${\bf (i)}$--${\bf (iii)}$, it holds
		\begin{align}
			|r_{ij}| = \Big|s_{|i-j|}^{({\gamma})}\Big|\Big|z_{i}^{\frac{1}{2}}-z_{j}^{\frac{1}{2}}\Big|^2 &=  \Big|s_{|i-j|}^{({\gamma})}\Big|\left|\int_{z_{i}}^{z_{j}}\frac{1}{2}\xi^{-\frac{1}{2}}d\xi\right|^2\notag\\
			& \leq  \Big|s_{|i-j|}^{({\gamma})}\Big|\left|\int_{z_{i}}^{z_{j}}\frac{1}{2}\check{b}^{-\frac{1}{2}}d\xi\right|^2
			\notag\\
			&=  4^{-1}\check{b}^{-1} |s_{|i-j|}^{({\gamma})}||z_{i}-z_{j}|^2\nonumber \\
			&\leq \frac{|s_{|i-j|}^{({\gamma})}|\tilde{b}^2|i-j|^2}{4\check{b}(M+1)^2}\notag\\
			&\leq \frac{||\{s_{k}^{({\gamma})}\}||_{\mathcal{D}_{{\gamma}}}\tilde{b}^2|i-j|^2}{4\check{b}(1+M)^{2}(1+|i-j|)^{{\gamma}+1}},\notag
		\end{align}
		which implies that
		\begin{align*}
			||\Delta_{{\bf S}_{\gamma,M}}({\bf Z})
			||_{\infty}&=\max\limits_{1\leq i\leq M}\sum\limits_{j=1}^{M}|r_{ij}|\notag\\
			&=\max\limits_{1\leq i\leq M}\Big(|r_{ii}|+\sum\limits_{j=1}^{i-1}|r_{ij}|+\sum\limits_{j=i+1}^{M}|r_{ij}|\Big)\notag\\
			&\leq\max\limits_{1\leq i\leq M}\frac{||\{s_{k}^{({\gamma})}\}||_{\mathcal{D}_{{\gamma}}}\tilde{b}^2}{4\check{b}(M+1)^{2}}\left(\sum\limits_{j=1}^{i-1}\frac{|i-j|^2}{(1+|i-j|)^{{\gamma}+1}}+\sum\limits_{j=i+1}^{M}\frac{|i-j|^2}{(1+|i-j|)^{{\gamma}+1}}\right)\notag\\
			&=\max\limits_{1\leq i\leq M}\frac{||\{s_{k}^{({\gamma})}\}||_{\mathcal{D}_{{\gamma}}}\tilde{b}^2}{4\check{b}(M+1)^{2}}\left(\sum\limits_{k=1}^{i-1}\frac{k^2}{(1+k)^{{\gamma}+1}}+\sum\limits_{k=1}^{M-i}\frac{k^2}{(1+k)^{{\gamma}+1}}\right)\notag\\
			&\leq\frac{\tilde{b}^2||\{s_{k}^{({\gamma})}\}||_{\mathcal{D}_{{\gamma}}}}{2\check{b}(M+1)^{2}}\sum\limits_{k=1}^{M}k^{1-{\gamma}}\notag\\
			&\leq\frac{\tilde{b}^2||\{s_{k}^{({\gamma})}\}||_{\mathcal{D}_{{\gamma}}}}{2\check{b}(M+1)^{2}}\sum\limits_{k=1}^{M}\int_{k-1}^{k}x^{1-{\gamma}}dx\notag\\
			&=\frac{||\{s_{k}^{({\gamma})}\}||_{\mathcal{D}_{{\gamma}}}\tilde{b}^2M^{2-{\gamma}}}{2\check{b}(M+1)^{2}(2-{\gamma})}\leq\frac{||\{s_{k}^{({\gamma})}\}||_{\mathcal{D}_{{\gamma}}}\tilde{b}^2}{2\check{b}(2-{\gamma})(M+1)^{\gamma}}.
		\end{align*}
		Since ${\bf S}_{\gamma,M}$ is symmetric, $\Delta_{{\bf S}_{\gamma,M}}({\bf Z})$ is also symmetric.
		Therefore, we have
		$||\Delta_{{\bf S}_{\gamma,M}}({\bf Z})||_2=\rho(\Delta_{{\bf S}_{\gamma,M}}({\bf Z}))\leq||\Delta_{{\bf S}_{\gamma,M}}({\bf Z})||_{\infty}$, which together with the above inequality completes the proof.
	\end{proof}
	\section{Proof of Lemma \ref{hpartcontrollm}}\label{hpartcontrllmproof}
	 \begin{proof}
		We note that $\nabla(\cdot)$ is shift-invariant. i.e.,
		\begin{equation*}
			\nabla({\bf Z}-\theta_1\check{b}{\bf I}_{M})= \nabla({\bf Z})\leq \frac{\tilde{b}}{M+1}.
		\end{equation*}
		Moreover, $\min({\bf Z}-\theta_1\check{b}{\bf I}_{M})\geq (1-\theta_1)\check{b}>0$. By Lemma \ref{resmatlemm}, it holds that
		\begin{equation*}
			||\Delta_{{\bf S}_{\gamma,M}}({\bf Z}-\theta_1\check{b}{\bf I}_{M})||_2\leq\frac{\mu_{\gamma}(||\{s_{k}^{({\gamma})}\}||_{\mathcal{D}_{{\gamma}}},\tilde{b},(1-\theta_1)\check{b})}{(1+M)^{\gamma}},
		\end{equation*}
		where the function $\mu_{\gamma}(\cdot,\cdot,\cdot)$ is defined in Lemma \ref{resmatlemm}.
		
		Moreover, ${\bf O}\prec({\bf Z}-\theta_1\check{b}{\bf I}_{M})^{\frac{1}{2}}$ and ${\bf O}\prec{\bf S}_{\gamma,M}$ imply that
		${\bf O}\prec({\bf Z}-\theta_1\check{b}{\bf I}_{M})^{\frac{1}{2}}{\bf S}_{\gamma,M}({\bf Z}-\theta_1\check{b}{\bf I}_{M})^{\frac{1}{2}}$.
		Hence,
			\begin{align}
				{\bf Z}{\bf S}_{\gamma,M}+{\bf S}_{\gamma,M}{\bf Z}&=2\theta_1\check{b}{\bf S}_{\gamma,M}+({\bf Z}-\theta_1\check{b}{\bf I}_{M}){\bf S}_{\gamma,M}+{\bf S}_{\gamma,M}({\bf Z}-\theta_1\check{b}{\bf I}_{M})\notag\\
				&=2\theta_1\check{b}{\bf S}_{\gamma,M}+2({\bf Z}-\theta_1\check{b}\check{b}{\bf I}_{M})^{\frac{1}{2}}{\bf S}_{\gamma,M}({\bf Z}-\theta_1\check{b}{\bf I}_{M})^{\frac{1}{2}}+\Delta_{{\bf S}_{\gamma,M}}({\bf Z}-\theta_1\check{b}{\bf I}_{M})\notag\\
				&\succ 2\theta_1\check{b}{\bf S}_{\gamma,M}-||\Delta_{{\bf S}_{\gamma,M}}({\bf Z}-\theta_1\check{b}{\bf I}_{M})||_2{\bf I}_M\notag\\
				&\succeq 2\theta_1\check{b}{\bf S}_{\gamma,M}-\frac{\mu_{\gamma}(||\{s_{k}^{({\gamma})}\}||_{\mathcal{D}_{{\gamma}}},\tilde{b},(1-\theta_1)\check{b})}{(1+M)^{\gamma}}{\bf I}_M.\label{sddslowbd}
			\end{align}
			
		Again, shift-invariance of $\nabla(\cdot)$ implies that
		\begin{equation*}
			\nabla((\hat{b}+\theta_2){\bf I}_{M}-{\bf Z})=\nabla({\bf Z})\leq \frac{\tilde{b}}{1+M}.
		\end{equation*}
		Besides, $\min((\hat{b}+\theta_2){\bf I}_{M}-{\bf Z})\geq \theta_2>0.$
		Lemma \ref{resmatlemm} implies that
		\begin{equation*}
			||\Delta_{{\bf S}_{\gamma,M}}((\hat{b}+\theta_2){\bf I}_{M}-{\bf Z})||_2\leq\frac{\mu_{\gamma}(||\{s_{k}^{({\gamma})}\}||_{\mathcal{D}_{{\gamma}}},\tilde{b},\theta_2)}{(1+M)^{\gamma}}.
		\end{equation*}
		Moreover, ${\bf O}\prec((\hat{b}+\theta_2){\bf I}_{M}-{\bf Z})^{\frac{1}{2}}$ and ${\bf O}\prec{\bf S}_{\gamma,M}$ imply that
		$$
		{\bf O}\prec[(\hat{b}+\theta_2){\bf I}_{M}-{\bf Z}]^{\frac{1}{2}}{\bf S}_{\gamma,M}[(\hat{b}+\theta_2){\bf I}_{M}-{\bf Z}]^{\frac{1}{2}}.
		$$
		Hence,
		\begin{align*}
			2(\hat{b}+\theta_2){\bf S}_{\gamma,M}=&((\hat{b}+\theta_2){\bf I}_{M}-{\bf Z}){\bf S}_{\gamma,M}+{\bf S}_{\gamma,M}((\hat{b}+\theta_2){\bf I}_{M}-{\bf Z})+{\bf Z}{\bf S}_{\gamma,M}+{\bf S}_{\gamma,M}{\bf Z}\\
			=&{\bf Z}{\bf S}_{\gamma,M}+{\bf S}_{\gamma,M}{\bf Z}+2[(\hat{b}+\theta_2){\bf I}_{M}-{\bf Z}]^{\frac{1}{2}}{\bf S}_{\gamma,M}[(\hat{b}+\theta_2){\bf I}_{M}-{\bf Z}]^{\frac{1}{2}}\\
			&+\Delta_{{\bf S}_{\gamma,M}}((\hat{b}+\theta_2){\bf I}_{M}-{\bf Z})\\
			\succ& {\bf Z}{\bf S}_{\gamma,M}+{\bf S}_{\gamma,M}{\bf Z}+||\Delta_{{\bf S}_{\gamma,M}}((\hat{b}+\theta_2){\bf I}_{M}-{\bf Z})||_2{\bf I}_M\\
			\succeq& {\bf Z}{\bf S}_{\gamma,M}+{\bf S}_{\gamma,M}{\bf Z}-\frac{\mu_{\gamma}(||\{s_{k}^{({\gamma})}\}||_{\mathcal{D}_{{\gamma}}},\tilde{b},\theta_2)}{(1+M)^{\gamma}}{\bf I}_{M}.
		\end{align*}
		That means
		\begin{equation*}
			2(\hat{b}+\theta_2){\bf S}_{\gamma,M}+\frac{\mu_{\gamma}(||\{s_{k}^{({\gamma})}\}||_{\mathcal{D}_{{\gamma}}},\tilde{b},\theta_2)}{(1+M)^{\gamma}}{\bf I}_{M}\succ {\bf Z}{\bf S}_{\gamma,M}+{\bf S}_{\gamma,M}{\bf Z},
		\end{equation*}
		which together with  \eqref{sddslowbd} completes the proof.
	\end{proof}
	\section{Proof of Lemma \ref{2dhpartcontrollm}}\label{d2hpartcontrollmproof}
	 \begin{proof}
		Denote
		\begin{equation*}
			{\bf D}_{j}={\rm diag}(d_{i,j})_{i=1}^{M_x},\quad 1\leq j\leq M_y.
		\end{equation*}
		Then, it is straightforward to check that
		\begin{align*}
			{\bf A}_x+{\bf A}_x^{\rm T}&={\bf D}({\bf I}_{M_y}\otimes {\bf S}_{\alpha,M_x})+({\bf I}_{M_y}\otimes {\bf S}_{\alpha,M_x}){\bf D}\\
			&={\rm blockdiag}({\bf D}_j{\bf S}_{\alpha,M_x}+{\bf S}_{\alpha,M_x}{\bf D}_j)_{j=1}^{M_y}.
		\end{align*}
		For each fixed $j\in 1\wedge M_y$, it holds that
		\begin{equation*}
			\nabla({\bf D}_j)\leq |d|_{\mathcal{L}_1(\Omega)}h_x=\frac{|d|_{\mathcal{L}_1(\Omega)}(r_1-l_1)}{M+1},\quad \hat{d}\geq \max({\bf D}_j)\geq \min({\bf D}_j)\geq \check{d}>0.
		\end{equation*}
		Hence, Lemma \ref{hpartcontrollm} implies that
		\begin{align*}
			&2\theta_1\check{d}{\bf S}_{\alpha,M_x}-s_{1,1}(\theta_1)\Delta x^{\alpha}{\bf I}_{M_x}\\
			&=2\theta_1\check{d}{\bf S}_{\alpha,M_x}-\left(\frac{\mu_{\alpha}(||\{s_{k}^{({\alpha})}\}||_{\mathcal{D}_{{\alpha}}},|d|_{\mathcal{L}_1(\Omega)}(r_1-l_1),(1-\theta_1)\check{d})}{(1+M)^{\alpha}}\right){\bf I}_{M_x}\\
			&\preceq {\bf D}_j{\bf S}_{\alpha,M_x}+{\bf S}_{\alpha,M_x}{\bf D}_j\\
			&\preceq  2(\hat{d}+\theta_3){\bf S}_{\alpha,M_x}+\left(\frac{\mu_{\alpha}(||\{s_{k}^{({\alpha})}\}||_{\mathcal{D}_{{\alpha}}},|d|_{\mathcal{L}_1(\Omega)},\theta_3)}{(1+M)^{\alpha}}\right){\bf I}_{M_x}\\
			&=2(\hat{d}+\theta_3){\bf S}_{\alpha,M_x}+s_{1,2}(\theta_3)\Delta x^{\alpha}{\bf I}_{M_x},
		\end{align*}
		which means
		\begin{equation*}
			2\theta_1\check{d}({\bf I}_{M_y}\otimes {\bf S}_{\alpha,M_x})-s_{1,1}(\theta_1)\Delta x^{\alpha}{\bf I}_{J}\prec \underbrace{{\rm blockdiag}({\bf D}_j{\bf S}_{\alpha,M_x}+{\bf S}_{\alpha,M_x}{\bf D}_j)_{j=1}^{M_y}}_{:={\bf A}_x+{\bf A}_x^{\rm T}}\prec 2(\hat{d}+\theta_3)({\bf I}_{M_y}\otimes {\bf S}_{\alpha,M_x})+s_{1,2}(\theta_3)\Delta x^{\alpha}{\bf I}_{J}.
		\end{equation*}
		
		On the other hand, by applying the permutation matrix,  one can check that
		\begin{equation*}
			{\bf A}_y+{\bf A}_y^{\rm T}={\bf P}_{x\leftrightarrow y}^{\rm T}[\tilde{\bf E}({\bf I}_{M_x}\otimes {\bf S}_{\beta,M_y})+({\bf I}_{M_x}\otimes {\bf S}_{\beta,M_y})\tilde{\bf E}]{\bf P}_{x\leftrightarrow y},
		\end{equation*}
		with $\tilde{\bf E}={\rm diag}(e({\bf V}_{y,x}))$. Clearly, $\tilde{\bf E}({\bf I}_{M_x}\otimes {\bf S}_{\beta,M_y})+({\bf I}_{M_x}\otimes {\bf S}_{\beta,M_y})\tilde{\bf E}$ has a similar structure with 
		${\bf A}_x+{\bf A}_x^{\rm T}={\bf D}({\bf I}_{M_y}\otimes {\bf S}_{\alpha,M_x})+({\bf I}_{M_y}\otimes {\bf S}_{\alpha,M_x}){\bf D}$. One can repeat the discussion for ${\bf A}_x+{\bf A}_x^{\rm T}$ to show that
		\begin{equation*}
			2\theta_2\check{e}( {\bf I}_{M_x}\otimes {\bf S}_{\beta,M_y})-s_{2,1}(\theta_2)\Delta y^{\beta}{\bf I}_{J}\prec\tilde{\bf E}({\bf I}_{M_x}\otimes {\bf S}_{\beta,M_y})+({\bf I}_{M_x}\otimes {\bf S}_{\beta,M_y})\tilde{\bf E}\prec 2(\hat{e}+\theta_4)({\bf I}_{M_x}\otimes  {\bf S}_{\beta,M_y})+s_{2,2}(\theta_4)\Delta y^{\beta}{\bf I}_{J}.
		\end{equation*}
		Then,
		\begin{align*}
			2\theta_2\check{e}( {\bf S}_{\beta,M_y}\otimes {\bf I}_{M_x})-s_{2,1}(\theta_2)\Delta y^{\beta}{\bf I}_{J}&={\bf P}_{x\leftrightarrow y}^{\rm T}[2\theta_2\check{e}( {\bf I}_{M_x}\otimes {\bf S}_{\beta,M_y})-s_{2,1}(\theta_2)\Delta y^{\beta}{\bf I}_{J}]{\bf P}_{x\leftrightarrow y}\\
			&\prec\underbrace{{\bf P}_{x\leftrightarrow y}^{\rm T}[\tilde{\bf E}({\bf I}_{M_x}\otimes {\bf S}_{\beta,M_y})+({\bf I}_{M_x}\otimes {\bf S}_{\beta,M_y})\tilde{\bf E}]{\bf P}_{x\leftrightarrow y}}_{:={\bf A}_y+{\bf A}_y^{\rm T}}\\
			&\prec {\bf P}_{x\leftrightarrow y}^{\rm T}[2(\hat{e}+\theta_4)({\bf I}_{M_x}\otimes  {\bf S}_{\beta,M_y})+s_{2,2}(\theta_4)\Delta y^{\beta}{\bf I}_{J}] {\bf P}_{x\leftrightarrow y}\\
			&=2(\hat{e}+\theta_4)( {\bf S}_{\beta,M_y}\otimes{\bf I}_{M_x})+s_{2,2}(\theta_4)\Delta y^{\beta}{\bf I}_{J},
		\end{align*}
		which completes the proof.
	\end{proof}
	\end{appendix}
\end{document}